\newtheorem{theorem}{Theorem}[section]
\theoremstyle{remark}
\newtheorem{definition}[theorem]{Definition}
\theoremstyle{remark}
\newtheorem{condition}{Condition}
\newtheorem{asmp}{Assumption}
\theoremstyle{plain}
\newtheorem{proposition}{Proposition}
\newtheorem{lem}{Lemma}
\newcommand{\Eann}{P^{\text{\sc ann}(\omega)}}
\newcommand{\nm}{{\sf N}}
\newcommand{\sign}{\mathrm{sign}}
\newcommand{\RR}{\mathbb{R}}
\newcommand{\U}{\mathcal{U}}
\newcommand{\Z}{\mathcal{Z}}
\newcommand{\XX}{\mathbb{X}}
\newcommand{\ZZ}{\mathbb{Z}}
\newcommand{\UU}{\mathbb{U}}
\newcommand{\PP}{\mathbb{P}}
\newcommand{\Pitilde}{\widetilde{\Pi}}
\newcommand{\comp}{\text{\sc c}}
\newcommand{\eps}{\varepsilon}
\title{Gibbs posterior concentration rates under sub-exponential type losses}
\author{Nicholas Syring\footnote{Department of Statistics, Iowa State University; {\tt nsyring@iastate.edu}. } \; and \; Ryan Martin\footnote{Department of Statistics, North Carolina State University; {\tt rgmarti3@ncsu.edu}}}
\date{\today}
\begin{document}

\maketitle 

\begin{abstract}    
Bayesian posterior distributions are widely used for inference, but their dependence on a statistical model creates some challenges.  In particular, there may be lots of nuisance parameters that require prior distributions and posterior computations, plus a potentially serious risk of model misspecification bias.  Gibbs posterior distributions, on the other hand, offer direct, principled, probabilistic inference on quantities of interest through a loss function, not a model-based likelihood.  Here we provide simple sufficient conditions for establishing Gibbs posterior concentration rates when the loss function is of a sub-exponential type.  We apply these general results in a range of practically relevant examples, including mean regression, quantile regression, and sparse high-dimensional classification.  We also apply these techniques in an important problem in medical statistics, namely, estimation of a personalized minimum clinically important difference.

\smallskip

\emph{Keywords and phrases:} classification; generalized Bayes; high-dimensional problem; M-estimation; model misspecification.
\end{abstract}

\section{Introduction}
\label{S:intro}  

A major selling point of the Bayesian framework is that it is normative: to solve a new problem, one only needs a statistical model/likelihood, a prior distribution for the model parameters, and the means to compute the corresponding posterior distribution.  Bayesians' obligation to specify a prior attracts criticism, but their need to specify a likelihood has a number of potentially negative consequences too, especially when the quantity of interest has meaning independent of a statistical model, like a quantile.  On the one hand, even if the posited model is ``correct,'' it is rare that all the parameters of that model are relevant to the problem at hand.  For example, if we are interested in a quantile of a distribution, which we model as skew-normal, then the focus is only on a specific real-valued function of the model parameters.  In such a case, the non-trivial effort invested in dealing with these nuisance parameters, e.g., specifying prior distributions and designing computational algorithms, is effectively wasted.  On the other hand, in the far more likely case where the posited model is ``wrong,'' that model misspecification can negatively impact one's conclusions about the quantity of interest.  For example, both skew-normal and Pareto models have a $\tau^\text{th}$ quantile, but the quality of inferences drawn about that quantile will vary depending on which of these two models is chosen.  

The non-negligible dependence on the posited statistical model puts a burden on the data analyst, and those reluctant to take that risk tend to opt for a non-Bayesian approach.  After all, if one can get a solution without specifying a statistical model, then it is impossible to incur model misspecification bias.  But in taking such an approach, they give up the normative advantage of Bayesian analysis.  Can they get the best of both worlds?  That is, can one construct a posterior distribution for the quantity of interest directly, incorporating available prior information, without specifying a statistical model and incurring the associated model misspecification risks, and without the need for marginalization over nuisance parameters?  Fortunately, the answer is {\em Yes}, and this is the present paper's focus. 

The so-called \emph{Gibbs posterior distribution} is the proper prior-to-posterior update when data and the interest parameter are linked by a loss function rather than a likelihood \citep{catoni, zhang.2006, bissiri.2016}.  Intuitively, the Gibbs and Bayesian posterior distributions coincide when the loss function linking data and parameter is a (negative) log-likelihood.  In that case the properties of the Gibbs posterior can be inferred from the literature on Bayesian asymptotics in both the well-specified and misspecified contexts.  For cases where the link is not through a likelihood, the large-sample behavior of the Gibbs posterior is less clear and elucidating this behavior under some simple and fairly general conditions is our goal here.   

As a practical example, medical investigators may want to know if a treatment whose effect has been judged to be statistically significant is also {\em clinically significant} in the sense that the patients  feel better post-treatment.  Therefore, they are interested in inference about the effect size cutoff beyond which patients feel better; this is called the {\em minimum clinically important difference} or {\em MCID}, e.g., \citet{jaescheke.1989}. Estimation of the MCID boils down to a classification problem, and standard Bayesian approaches to binary regression do not perform well in this setting; misspecifying the link function leads to bias, and nonparametric modeling of the link function is inefficient \citep{choudhuri.2007}. Instead, we found that a Gibbs posterior distribution, as described above, provided a very reasonable and robust solution to the MCID problem \citep{syring.martin.mcid}.  In some applications, one seeks a ``personalized'' or subject-specific cutoff that depends on a set of additional covariates.  This personalized MCID could be high- or even infinite-dimensional, and our previous Gibbs posterior analysis is not equipped to handle such situations.  But the framework developed here is; see Section~\ref{S:mcid}.  

In the following sections we lay out and apply conditions under which a Gibbs posterior distribution concentrates, asymptotically, on a neighborhood of the true value of the inferential target as the sample sizes increases.  Our focus is not on the most general set of sufficient conditions for concentration; rather, we aim for conditions that are both widely applicable and easily verified.  To this end, we consider loss functions of a {\em sub-exponential type}, ones that satisfy an inequality similar to the moment-generating function bound for sub-exponential random variables \citep{boucheron.etal.2012}.  We can apply this condition in a variety of problems, from regression to classification, and in both fixed- and high-dimensional settings.  An added advantage is that our conditions lead to straightforward proofs of concentration.  

Section~\ref{S:gibbs} provides some background and formally defines the Gibbs posterior distribution.  In Section~\ref{S:main}, we state our theoretical objectives and present our main results, namely, sets of sufficient conditions under which the Gibbs posterior achieves a specified asymptotic concentration rate. A unique attribute of the Gibbs posterior distribution is its dependence on a tuning parameter called the {\em learning rate}, and our results cover a constant, vanishing sequence, and even data-dependent learning rates. Section~\ref{s:extensions} further discusses verifying our conditions and extends our conditions and main results to handle certain unbounded loss functions.  Section~\ref{S:examples} applies our general theorems to establish Gibbs posterior concentration rates in a number of practically relevant examples, including nonparametric curve estimation, and high-dimensional sparse classification.  Section~\ref{S:mcid} formulates the personalized MCID application, presents a relevant Gibbs posterior concentration rate result, and gives a brief numerical illustration.  Concluding remarks are given in Section~\ref{S:discuss}, and proofs, etc.~are postponed to the appendix. 

\section{Background on Gibbs posteriors}
\label{S:gibbs}

\subsection{Notation and definitions}
\label{SS:gibbs.defn}

Consider a measurable space $(\UU, \U)$, with $\U$ a $\sigma$-algebra of subsets of $\UU$, on which a probability measure $P$ is defined.  A random element $U \sim P$ need not be a scalar, and many of the applications we have in mind involve $U=(X,Y)$ or $U=(X,Y,Z)$, where $Y$ denotes a ``response'' variable and $X$ or $(X,Z)$ denotes a ``predictor'' variable, and $P$ encodes the dependence between the entries in $U$.  Then the real-world phenomenon under investigation is determined by $P$ and our goal is to make inference on a relevant feature of $P$, which we define as a given functional $\theta=\theta(P)$, taking values in $\Theta$. Note that $\theta$ could be finite-, high-, or even infinite-dimensional. 

The specific way $\theta$ relates to $P$ guides our posterior construction.  Suppose there is a {\em loss function}, $\ell_\theta(u)$, that measures how closely a generic value of $\theta$ agrees with a data point $u$.  (As is customary, ``$\theta$'' will denote both the quantity of interest and a generic value of that quantity; when we need to distinguish the true from a generic value, we will write ``$\theta^\star$.'')  For example, if $u=(x,y)$ is a predictor--response pair, and $\theta$ is a function, then the loss might be 
\begin{equation}
\label{eq:loss.examples}
\ell_\theta(u) = |y - \theta(x)| \quad \text{or} \quad \ell_\theta(u) = 1\{y \neq \theta(x)\},
\end{equation} 
depending on whether $y$ is continuous or discrete/binary, where $1(A)$ denotes the indicator function at the event $A$.  Another common situation is when one specifies a statistical model, say, $\{P_\theta: \theta \in \Theta\}$, indexed by a parameter $\theta$, and sets $\ell_\theta(u) = -\log p_\theta(u)$, where $p_\theta$ is the density of $P_\theta$ with respect to some fixed dominating measure.  In all of these cases, the idea is that a loss is incurred when there is a certain discrepancy between $\theta$ and the data point $u$.  Then our inferential target is the value of $\theta$ that minimizes the risk or average loss/discrepancy.  

\begin{definition}
\label{def:R}
Consider a real-valued loss function $\ell_\theta(u)$ defined on $\UU \times \Theta$, and define the {\em risk function} $R(\theta) = P \ell_\theta$, the expected loss with respect to $P$; throughout, $P f$ denotes expected value of $f(U)$ with respect to $U \sim P$.  Then the inferential target is 
 \begin{equation}
\label{eq:target}
\theta^\star \in \arg\min_{\theta \in \Theta} R(\theta).
\end{equation}
\end{definition}

Given that estimation/inference is our goal, our focus will be on case where the risk minimizer, $\theta^\star$, is unique, so that the ``$\in$'' in \eqref{eq:target} is an equality.  But this is not absolutely necessary for our theory.  Indeed, the main results in Section~\ref{S:main} remain valid even if the risk minimizer is not unique, and we make a few brief comments about this extension in the discussion following Theorem~\ref{thm:rate}.  

The risk function itself is unavailable---it depends on $P$---and, therefore, so is $\theta^\star$.  However, suppose that we have an independent and identically distributed (iid) sample $U^n = (U_1,\ldots,U_n)$ of size $n$, with each $U_i$ having marginal distribution $P$ on $\UU$.  The iid assumption is not crucial, but it makes the notation and discussion easier; an extension to independent but not identically distributed (inid) cases is discussed in the context of an example in Section~\ref{SS:pred_sq}.  In general, we have data $U^n$ taking values in the measurable space $(\UU^n, \U^n)$, with joint distribution denoted by $P^n$.  From here, we can naturally replace the unknown risk in Definition~\ref{def:R} with an empirical version and proceed accordingly. 

\begin{definition}
\label{def:Rn}
For a loss function $\ell_\theta$ as described above, define the {\em empirical risk} as 
\begin{equation}
R_n(\theta) = \PP_n \ell_\theta = \frac1n \sum_{i=1}^n \ell_\theta(U_i), 
\end{equation}
where $\PP_n = n^{-1} \sum_{i=1}^n \delta_{U_i}$, with $\delta_u$ the Dirac point-mass measure at $u$, is the empirical distribution.  
\end{definition}

Naturally, if the inferential target is the risk minimizer, then it makes sense to estimate that quantity based on data $U^n$ by minimizing the empirical risk, i.e., 
\begin{equation}
 \hat\theta_n \in \arg\min_{\theta \in \Theta} R_n(\theta). 
 \end{equation}
This is the {\em M-estimator} based on an objective function determined by the loss $\ell_\theta$; when $R_n$ is differentiable, the root of $\dot R_n$, the derivative of $R_n$, is a {\em Z-estimator} and ``$\dot R_n(\theta) = 0$'' is often called an {\em estimating equation} \citep{godambe, vaart}. Since $\theta \mapsto \ell_\theta$ need not be smooth or convex, and $R_n$ is an average over a finite set of data, it is possible that its minimizer is not unique, even if $\theta^\star$ is.  These computational challenges are, in fact, part of what motivates the Gibbs posterior, as we discuss below.  

There is a rich literature on the asymptotic distribution properties of M-estimators, which can be used for developing hypothesis tests and confidence intervals \citep{maronna.2006, huber}.  As an alternative, one might consider a Bayesian approach to quantify uncertainty, but there is an immediate obstacle, namely, no statistical model/likelihood connecting the data to the quantity of interest.  If we did have a statistical model, with a density function $p_\theta$, then the most natural loss is $\ell_\theta(u) = -\log p_\theta(u)$ and the likelihood is $\exp\{-n R_n(\theta)\}$.  It is, therefore, tempting to follow that same strategy for general losses, resulting in a sort of generalized posterior distribution for $\theta$.  

\begin{definition}
\label{def:gibbs}
Given a loss function $\ell_\theta$ and the corresponding empirical risk $R_n$ in Definition~\ref{def:Rn}, define the {\em Gibbs posterior distribution} as 
\begin{equation}
\label{eq:gibbs}
\Pi_n(d\theta) \propto e^{-\omega \, n R_n(\theta)} \, \Pi(d\theta), \quad \theta \in \Theta, 
\end{equation}
where $\Pi$ is a prior distribution and $\omega > 0$ is a so-called {\em learning rate} \citep{holmes.2017,syring.martin.scaling, grunwald2012, vanerven.etal.2015}.  The dependence of $\Pi_n$ on $\omega$ will generally be omitted from the notation, but see Sections~\ref{SS:gibbs.lr} and \ref{SS:exp.rates}.
\end{definition}

We will assume that the right-hand side of \eqref{eq:gibbs} is integrable in $\theta$, so that the proportionality constant is well-defined.  Integrability holds whenever the loss function is bounded from below, like for those in \eqref{eq:loss.examples}, but this could fail in some cases where the loss is not bounded away from $-\infty$, e.g., when $\ell_\theta(u)$ is a negative log-density.  In such cases, extra conditions on the prior distribution would be required to ensure the Gibbs posterior is well-defined. 

An immediate advantage of this approach, compared to the M-estimation strategy described above, is that the user is able to incorporate available prior information about $\theta$ directly into the analysis.  This is especially important in cases where the quantity of interest has a real-world meaning, as opposed to being just a model parameter, where having genuine prior information is the norm rather than the exception.  Additionally, even though there is no likelihood, the same computational methods, such as Markov chain Monte Carlo \citep{chernozhukov.hong.2003} and variational approximations \citep{alquier.2016}, common in Bayesian analysis, can be employed to numerically approximate the Gibbs posterior.  

We have opted here to define the Gibbs posterior directly as an object to be used and studied, but there is a more formal, more principled way in which Gibbs posteriors emerge.  In the {\em PAC-Bayes} literature, the goal is to construct a {\em randomized estimator} that concentrates in regions of $\Theta$ where the risk, $R(\theta)$, or its empirical version, $R_n(\theta)$, is small \citep{valiant.1984, mcallester.1999, alquier.2008, guedj.2019}.  That is, the Gibbs posterior can be viewed as a solution to an optimization problem rather than a solution to the updating-prior-beliefs problem.  More formally, for a given prior $\Pi$ on $\Theta$, suppose the goal is to find 
\[ \inf_\mu \Bigl\{ \int R_n(\theta) \, \mu(d\theta) + (\omega n)^{-1} K(\mu, \Pi) \Bigr\}, \]
where the infimum is over all probability measures $\mu$ that are absolutely continuous with respect to $\Pi$, and $K$ denotes the Kullback--Leibler divergence.  Then it can be shown \citep[e.g.,][]{zhang.2006, bissiri.2016} that the unique solution is $\Pi_n$, the Gibbs posterior defined in \eqref{eq:gibbs}.  Therefore, the Gibbs posterior distribution is the measure minimizing a penalized risk, averaged with respect to a given prior, $\Pi$.

\subsection{Learning rate}
\label{SS:gibbs.lr}

Readers familiar with M-estimation may not recognize the learning rate, $\omega$.  This does not appear in the M-estimation context because all that influences the optimization problem---and the corresponding asymptotic distribution theory---is the shape of the loss/risk function, not its magnitude or scale.  On the other hand, the learning rate is an essential component of the Gibbs posterior distribution in \eqref{eq:gibbs} since the distribution depends on both the shape and scale of the loss function.  Data-driven strategies for tuning the learning rate are available \citep{holmes.2017, syring.martin.scaling, lyddon.etal.2019, wu.martin.2020, wu.martin.2021}.  

Here we focus on how the learning rate affects posterior concentration.  In typical examples, our results require the learning rate to be a sufficiently small constant.  That constant depends on features of $P$, which are generally unknown, so, in practice, the learning rate can be taken to be a slowly vanishing sequence, which has a negligible effect on the concentration rate.  In more challenging examples, we require the learning rate to vanish sufficiently fast in $n$; this is also the case in \citet{grunwald.2018}.

\subsection{Relation to other generalized posterior distributions}

A {\em generalized posterior} is any data-dependent distribution other than a well-specified Bayesian posterior.  Examples include Gibbs and misspecified Bayesian posteriors, which we compare first.  

A key characteristic of the misspecified Bayesian posterior is that it is {\em accidentally misspecified}.  That is, the data analyst does his/her best to posit a sound model, $Q_\gamma$, for the data-generating process and obtains the corresponding posterior for the model parameter $\gamma$.  That model will typically be misspecified, i.e., $P \not\in \{Q_\gamma: \gamma \in \Gamma\}$, so the aforementioned posterior will, under certain conditions, concentrate around the point $\gamma^\dagger$ that minimizes the Kullback--Leibler divergence of $Q_\gamma$ from $P$ \citep{kleijn, deblasi.walker.2013, rama.etal.2015}.  For a feature $\theta(\cdot)$ of interest, typically $\theta(Q_{\gamma^\dagger}) \neq \theta(P)$, so there is generally a bias that the data analyst can do nothing about.  A Gibbs posterior, on the other hand, is {\em purposely misspecified}---no attempt is made to model $P$.  Rather, it directly targets the feature of interest via the loss function that defines it. This strategy avoids model misspecification bias, but its point of view also sheds light on the importance of the choice of learning rate. Since the data analyst knows the Gibbs posterior is not a correctly specified Bayesian posterior, they know the learning rate must be handled with care. 

A number of authors have studied generalized posterior distributions formed using a likelihood raised to a power $\eta \in (0, 1]$; see, e.g., \citet{martin.etal.2017} and \citet{grunwald.2017} among others.  These $\eta$-generalized posteriors tend to be robust to misspecification of the probability model, and data-driven methods to tune $\eta$ are developed in, e.g., \citet{grunwald.2017}.  These posteriors coincide with Gibbs posteriors based on a log-loss and with the learning rate $\omega$ corresponding to $\eta$.

A common non-Bayesian approach to statistics bases inferences on moment conditions of the form $Pf_j = 0$ for functions $f_1,\ldots f_J$ , rather than on a fully-specified likelihood. A number of authors have extended this framework to posterior inference.  \citet{kim.2002} uses the moment conditions to construct a so-called limited-information likelihood to use in place of a fully-specified likelihood in a Bayesian formulation.  Similarly, \citet{chernozhukov.hong.2003} construct a posterior by taking a (pseudo) log-likelihood equal to a quadratic form determined by the set of moment conditions. \citet{chib.etal.2018} studies Bayesian exponentially-tilted empirical likelihood posterior distributions, also defined by moment conditions.  In some cases the Gibbs posterior distribution coincides with the above moment-based methods.  For instance, in the special case that the risk $R$ is differentiable at $\theta^\star$ with derivative $\dot R(\theta^\star)$, risk-minimization corresponds to the single moment condition $\dot R(\theta^\star)=0$.

\section{Asymptotic concentration rates}
\label{S:main}

\subsection{Objective}

A large part of the Bayesian literature concerns the asymptotic concentration properties of their posterior distributions.  Roughly, if data are generated from a distribution for which the quantity of interest takes value $\theta^\star$, then, as $n \to \infty$, the posterior distribution ought to concentrate its mass around that same $\theta^\star$.  As we will show, optimal concentration rates are possible with Gibbs posteriors, so the robustness achieved by not specifying a statistical model has no cost in terms of (asymptotic) efficiency.  

Towards this, for a fixed $\theta^\star \in \Theta$, let $d(\theta; \theta^\star)$ denote a divergence measure on $\Theta$ in the sense that $d(\theta; \theta^\star) \geq 0$ for all $\theta$, with equality if and only if $\theta=\theta^\star$.  The divergence measure could depend on the sample size $n$ or other deterministic features of the problem at hand, especially in the independent but not iid setting; see Section~\ref{SS:pred_sq}.  Our objective is to provide conditions under which the Gibbs posterior will concentrate asymptotically, at a certain rate, around $\theta^\star$ relative to the divergence measure $d$.  Throughout this paper, $(\eps_n)$ denotes a deterministic sequence of positive numbers with $\eps_n \to 0$, which will be referred to as the Gibbs posterior {\em concentration rate}.   

\begin{definition}
\label{def:rate}
The Gibbs posterior $\Pi_n$ in \eqref{eq:gibbs} asymptotically concentrates around $\theta^\star$ at rate (at least) $\eps_n$, with respect to $d$, if
\begin{equation}
\label{eq:rate}
P^n\Pi_n(\{\theta: d(\theta;\theta^\star) > M_n\eps_n\}) \rightarrow 0, \quad \text{as $n \to \infty$,}
\end{equation}
where $M_n>0$ is either a (deterministic) sequence satisfying $M_n \to \infty$ arbitrarily slowly or is a sufficiently large constant, $M_n \equiv M$. 
\end{definition}

In the PAC-Bayes literature, the Gibbs posterior distribution is interpreted as a ``randomized estimator,'' a generator of random $\theta$ values that tend to make the risk difference small.  For iid data and with risk divergence $d(\theta;\theta^\star) = \{R(\theta)-R(\theta^\star)\}^{1/2}$, a concentration result like that in Definition~\ref{def:rate} makes this strategy clear, since the $\Pi_n$-probability of the event $\{R(\theta) - R(\theta^\star) \leq \eps_n^2\}$ would be $\to 1$.

If the Gibbs posterior concentrates around $\theta^\star$ in the sense of Definition~\ref{def:rate}, then any reasonable estimator derived from that distribution, such as the mean, should inherit the $\eps_n$ rate at $\theta^\star$ relative to the divergence measure $d$.  This can be made formal under certain conditions on $d$; see, e.g., Corollary~1 in \citet{bsw1999} and the discussion following the proof of Theorem~2.5 in \citet{ggv2000}. 

Besides concentration rates, in certain cases it is possible to establish distributional approximations to Gibbs posteriors, i.e., Bernstein--von Mises theorems.  Results for finite-dimensional problems and with sufficiently smooth loss functions can be found in, e.g., \citet{bhattacharya.martin.2020} and \citet{chernozhukov.hong.2003}.

\subsection{Conditions}
\label{SS:discussion}

Here we discuss a general strategy for proving Gibbs posterior concentration and the kinds of sufficient conditions needed for the strategy to be successful.  To start, set $A_n=\{\theta:d(\theta,\theta^\star)>M_n\eps_n\} \subset \Theta$.
Then our first step towards proving concentration is to express $\Pi_n(A_n)$ as the ratio
\begin{equation}
\label{eq:gibbs.frac}\Pi_n(A_n) = \frac{N_n(A_n)}{D_n} = \frac{\int_{A_n}\exp[-\omega n\{R_n(\theta)-R_n(\theta^\star)\}] \, \Pi(d\theta)}{\int_{\Theta} \exp[-\omega n\{R_n(\theta)-R_n(\theta^\star)\}] \, \Pi(d\theta)}.
\end{equation}
The goal is to suitably upper and lower bound $N_n(A_n)$ and $D_n$, respectively, in such a way that the ratios of these bounds vanish.  Two sufficient conditions are discussed below, the first primarily dealing with the loss function and aiding in bounding $N_n(A_n)$ and the second primarily concerning the prior distribution and aiding in bounding $D_n$.  Both conditions concern the excess loss $\ell_\theta(U) - \ell_{\theta^\star}(U)$ and its mean and variance:
\[ m(\theta, \theta^\star):=P(\ell_\theta - \ell_{\theta^\star}) \quad \text{and} \quad v(\theta,\theta^\star):=P(\ell_\theta - \ell_{\theta^\star})^2 - m(\theta,\theta^\star)^2. \]

\subsubsection{Sub-exponential type losses}
\label{SSS:exp.loss}

Our method for proving posterior concentration requires a vanishing upper bound on the expectation of the numerator term $N_n(A_n)$ in \eqref{eq:gibbs.frac}.  Since the integrand, $\exp[-\omega n\{R_n(\theta)-R_n(\theta^\star)\}]$, in $N_n(A_n)$ is non-negative, Fubini's theorem says  it suffices to bound its expectation.  Further, by independence
\[ P^n e^{-\omega n \{R_n(\theta) - R_n(\theta^\star)\}} = \{ P e^{-\omega(\ell_\theta - \ell_{\theta^\star})}\}^n, \]
which reveals that the key to bounding $P^n N_n(A_n)$ is to bound $P e^{-\omega(\ell_\theta - \ell_{\theta^\star})}$, the expected exponentiated excess loss.  In order for the bound on $P^n N_n(A_n)$ to vanish it must be that $P e^{-\omega(\ell_\theta - \ell_{\theta^\star})}<1$, but this is not enough to identify the concentration rate in \eqref{eq:rate}.  Rather, the speed at which $P e^{-\omega(\ell_\theta - \ell_{\theta^\star})}$ vanishes must be a function of $d(\theta,\theta^\star)$, and we take this relationship as our key condition for Gibbs posterior concentration.  When this holds we say the loss function is of {\em sub-exponential type}.   

\begin{condition}
\label{cond:loss}
There exists an interval $(0,\bar \omega)$ and constants $K, \,r>0$, such that for all $\omega \in (0,\bar \omega)$ and for all sufficiently small $\delta > 0$, for $\theta\in\Theta$ 
\begin{equation}
\label{eq:cond.loss.global}
d(\theta; \theta^\star) > \delta \implies P e^{-\omega
(\ell_\theta-\ell_{\theta^\star})} < e^{-K \omega \delta^r} 
\end{equation}
(The constant $r>0$ that appears here and below can take on different values depending on the context.  The case is $r=2$ common, but some ``non-regular'' problems require $r \neq 2$; see Section~\ref{SS:classification}.)
\end{condition}

An immediate consequence of Condition~\ref{cond:loss} and the definition of $A_n$ is the key finite-sample, exponential bound on the Gibbs posterior numerator
\begin{equation}
\label{eq:consequence}
P^n N_n(A_n)  \leq e^{-K \omega nM_n^r\eps_n^r}. 
\end{equation}

For some intuition behind Condition~\ref{cond:loss}, consider the following.  Let $f$ be a real-valued function such that the random variable $f(U)$ has a distribution with sufficiently thin tails.  This is automatic when $f$ is bounded, but suppose $f(U)$ has a moment-generating function, i.e., is sub-exponential.  For bounding the moment-generating function, the dream case would be if $P e^{-\omega f} \leq e^{-\omega Pf}$.  Unfortunately, Jensen's inequality implies the dream is an impossibility.  It is possible, however, to show 
\[ P e^{-\omega f} \leq e^{-\omega Pf + \omega^2 G(f)}, \]
for suitable $G(f) > 0$ depending on certain features of $f$ (and of $P$). If it could also be shown, e.g., that $G(f) \lesssim Pf$, then we are in a ``near-dream'' case where 
\[ P e^{-\omega f} \leq e^{-c \omega Pf}, \quad \text{for a constant $c \in (0,1)$ and sufficiently small $\omega$.} \]
The little extra needed beyond sub-exponential to achieve the ``near-dream'' case bound above is why we refer to such $f$ as being sub-exponential type. 

Towards verifying Condition~\ref{cond:loss} we briefly review some developments in \citet{grunwald.2018}; further comments can be found in Section~\ref{ss:check1}.   These authors focus on an {\em annealed expectation} which, for a real-valued function $f$ of the random element $U \sim P$ and a fixed constant $\omega > 0$, is defined as $\Eann f = -\omega^{-1} \log P e^{-\omega f}$.  With this, we find that $P e^{-\omega f} = \exp\{ -\omega \Eann f\}$. So an upper bound as we require in Condition~\ref{cond:loss} is equivalent to a corresponding lower bound on $\Eann (\ell_\theta - \ell_{\theta^\star})$.  The ``strong central condition'' in \citet{grunwald.2018} states that, \[ \Eann (\ell_\theta - \ell_{\theta^\star}) \geq 0, \quad \text{for all $\theta \in \Theta$}. \]The above inequality implies $P e^{-\omega(\ell_\theta - \ell_{\theta^\star})} \leq 1$ and, in turn, that $P^n N_n(A_n) = O(1)$ as $n \to \infty$.  The other conditions in \citet{grunwald.2018} aim to lower-bound the annealed expected excess loss by a suitable function of the excess risk.  For example, Lemma~13 in \citet{grunwald.2018} shows that a ``witness condition'' implies \[ \Eann (\ell_\theta - \ell_{\theta^\star}) \gtrsim R(\theta) - R(\theta^\star), \quad \text{for all $\theta \in \Theta$}, \] so, if $R(\theta) - R(\theta^\star)$ is of the order $d(\theta,\theta^\star)^r$ for some $r > 0$, then we recover Condition~\ref{cond:loss}.  Therefore, our Condition~\ref{cond:loss} is {\em exactly} what is needed to control the numerator of the Gibbs posterior, and the strong central and witness conditions developed in \citet{grunwald.2018} and elsewhere constitute a set of sufficient conditions for our Condition~\ref{cond:loss}. 

A subtle difference between our approach and Gr\"unwald and Mehta's is that the bounds in the above two displays are required {\em for all $\theta \in \Theta$}; Condition~\ref{cond:loss} only deals with {\em $\theta$ bounded away from $\theta^\star$}.  This difference arises because we directly target a bound on the Gibbs posterior probability, $\Pi_n(A_n)$, an integration over $A_n \not\ni \theta^\star$, whereas \citet{grunwald.2018} targets a bound on the Gibbs posterior mean of $\theta \mapsto R(\theta) - R(\theta^\star)$, an integration over all of $\Theta$.  In Example~3.8 of \citet{vanerven.etal.2015} the global requirement in \citet{grunwald.2018}'s condition is a disadvantage as \citet{vanerven.etal.2015} illustrate it creates some challenges in checking the bounds in the above two displays. 

Besides Condition~\ref{cond:loss}, other strategies for bounding $N_n(A_n)$ in \eqref{eq:gibbs.frac} have been employed in the literature.  For example, empirical process techniques are used in \citet{syring.martin.mcid, bhattacharya.martin.2020} to prove Gibbs posterior concentration in finite-dimensional applications.  Generally, such proofs hinge on a uniform law of large numbers, which can be challenging to verify in non-parametric problems.  \citet{chernozhukov.hong.2003} require the stronger condition that $\sup_{d(\theta,\theta^\star)>\delta} \{R_n(\theta)-R_n(\theta^\star)\} \to 0$ in $P^n$-probability.  When it holds they immediately recover an in-probability bound on $N_n(A_n)$, irrespective of $\omega$, but they do not obtain a finite-sample bound on $P^n\Pi_n(A_n)$.  Their analysis is limited to finite-dimensional parameters and  empirical risk functions $R_n(\theta)$ that are smooth in a neighborhood of $\theta^\star$.  This latter condition excludes important examples like the misclassification-error loss function; see Section~\ref{SS:classification}.

There are situations in which Condition~\ref{cond:loss} does not hold, and we address this formally in Section~\ref{s:extensions}. As an example, note that both Condition~\ref{cond:loss} and the witness condition used in \citet{grunwald.2018} are closely related to a Bernstein-type condition relating the first two moments of the excess loss: $P(\ell_\theta - \ell_{\theta^\star})^2 \leq c\{R(\theta)-R(\theta^\star)\}^\alpha$ for constants $(c, \,\alpha)>0$.  For the ``check'' loss used in quantile estimation, the Bernstein condition is generally not satisfied and, consequently, neither Gr\"unwald and Mehta's witness condition nor our Condition~\ref{cond:loss}---at least not in their original forms---can be verified.  Similarly, if the excess loss, $\ell_\theta(U) - \ell_{\theta^\star}(U)$, a function of data $U$, is heavy-tailed, then the moment-generating function bound in Condition~\ref{cond:loss} does not hold. For these cases, some modifications to the basic setup are required, which we present in Section~\ref{s:extensions}.  

\subsubsection{Prior distribution}
\label{SSS:prior}

Generally, the prior must place enough mass on certain ``risk-function" neighborhoods $G_n:=\{\theta: m(\theta,\theta^\star) \vee v(\theta,\theta^\star) \leq \eps_n^r\}$.  This is analogous to the requirement in the Bayesian literature that the prior place sufficient mass on Kullback--Leibler neighborhoods of $\theta^\star$; see, e.g., \citet{shen.wasserman.2001} and \citet{ggv2000}.  Some version of the following prior bound is needed
\begin{equation}
\label{eq:neighborhoods}
\log\Pi(G_n) \gtrsim -n\eps_n^r, \end{equation}
for $r$ as in Condition~\ref{cond:loss}.  
Lemma~1 in the appendix, along with \eqref{eq:neighborhoods}, provides a lower probability bound on the denominator term $D_n$ defined in \eqref{eq:gibbs.frac}.  That is, 
\begin{equation}
\label{eq:prior.bound.tmp}
P^n\bigl(D_n \leq b_n \bigr) 
\leq (n\eps_n^r)^{-1}, 
\end{equation}
where $b_n = \frac12 \Pi(G_n) e^{-2\omega n \eps_n^r}$.
Both the form of the risk-function neighborhoods and the precise lower bound in \eqref{eq:neighborhoods} depend on the concentration rate and the learning rate, so the results in Section~\ref{SS:exp.rates} all require their own version of the above prior bound.

\citet{grunwald.2018} only require bounds on the prior mass of the larger neighborhoods $\{\theta: m(\theta,\theta^\star) \leq \eps_n^r\}$.  Under their condition, we can derive a lower bound on $P^n D_n$ similar to Lemma~1 in \citet{martin.hong}.  However, our proofs require an in-probability lower bound on $D_n$, which in turn requires stronger prior concentration like that in \eqref{eq:neighborhoods}.  While there are important examples where the lower bounds on $m(\theta,\theta^\star)$ and $v(\theta,\theta^\star)$ are of different orders, none of the applications we consider here are of that type.  Therefore, the stronger prior concentration condition in \eqref{eq:neighborhoods} does not affect the rates we derive for the examples in Section~\ref{S:examples}.  Moreover, as discussed following the statement of Theorem~\ref{thm:rate} below, our finite-sample bounds are better than those in \citet{grunwald.2018}, a direct consequence of our method of proof that uses a smaller neighborhood $G_n$.

\subsection{Main results}
\label{SS:exp.rates}

In this section we present general results on Gibbs posterior concentration.  Proofs can be found in Section 1 of the supplementary material.  Our first result establishes Gibbs posterior concentration, under Condition~\ref{cond:loss} and a local prior condition, for sufficiently small constant learning rates.  

\begin{theorem}
\label{thm:rate}
Let $\eps_n$ be a vanishing sequence satisfying $n\eps_n^r \to \infty$ for a constant $r>0$.  Suppose the prior satisfies
\begin{equation} 
\label{eq:prior.rate}
\log\Pi(\{\theta: m(\theta,\theta^\star) \vee v(\theta,\theta^\star) \leq \eps_n^r\}) \geq -C_1n\eps_n^r,
\end{equation}
for $C_1>0$ and for divergence measure $d$, the same $r>0$ as above, and learning rate $\omega\in (0,\bar\omega)$ for some $\bar\omega>0$.  If the loss function satisfies Condition~\ref{cond:loss}, then the Gibbs posterior distribution in \eqref{eq:gibbs} has asymptotic concentration rate $\eps_n$, with $M_n \equiv M$ a large constant as in Definition~\ref{def:rate}. 
\end{theorem}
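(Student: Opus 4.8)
The plan is to analyze the ratio $\Pi_n(A_n) = N_n(A_n)/D_n$ from \eqref{eq:gibbs.frac}, with $A_n = \{\theta: d(\theta;\theta^\star) > M\eps_n\}$: bound $N_n(A_n)$ from above in $P^n$-expectation, bound $D_n$ from below with $P^n$-probability tending to one, and then verify that the quotient of these two bounds vanishes. Since $\eps_n \to 0$, for any fixed constant $M$ the radius $M\eps_n$ is eventually as small as required, so Condition~\ref{cond:loss} applies on $A_n$ for all large $n$; this is exactly why a constant $M$ will suffice here, given that the learning rate $\omega$ is held fixed.

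For the numerator, Fubini's theorem and the iid assumption give $P^n N_n(A_n) = \int_{A_n}\{P e^{-\omega(\ell_\theta - \ell_{\theta^\star})}\}^n\,\Pi(d\theta)$; applying Condition~\ref{cond:loss} pointwise on $A_n$ with $\delta = M\eps_n$ and using $\Pi(A_n)\le 1$ yields \eqref{eq:consequence}, namely $P^n N_n(A_n) \le e^{-K\omega n M^r\eps_n^r}$. For the denominator, restrict the integral defining $D_n$ to the neighborhood $G_n = \{\theta: m(\theta,\theta^\star)\vee v(\theta,\theta^\star)\le\eps_n^r\}$ and combine Lemma~\ref{lem:den} with the prior hypothesis \eqref{eq:prior.rate}; as in \eqref{eq:prior.bound.tmp} this gives
\[ P^n\Bigl(D_n \le \tfrac12\Pi(G_n)e^{-2\omega n\eps_n^r}\Bigr) \le (n\eps_n^r)^{-1}, \]
and \eqref{eq:prior.rate} shows $\tfrac12\Pi(G_n)e^{-2\omega n\eps_n^r} \ge \tfrac12 e^{-(C_1+2\omega)n\eps_n^r} =: b_n$. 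On the event $E_n = \{D_n > b_n\}$ we have $\Pi_n(A_n) \le N_n(A_n)/b_n$, while $P^n(E_n^\comp) \le (n\eps_n^r)^{-1}$.

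Putting the pieces together, and bounding $\Pi_n(A_n)\le 1$ on $E_n^\comp$,
\[ P^n\Pi_n(A_n) \le b_n^{-1}\,P^n N_n(A_n) + P^n(E_n^\comp) \le 2\,e^{-(K\omega M^r - C_1 - 2\omega)\,n\eps_n^r} + (n\eps_n^r)^{-1}. \]
Now pick the constant $M$ large enough that $K\omega M^r > C_1 + 2\omega$, which is possible since $\omega \in (0,\bar\omega)$ is fixed; because $n\eps_n^r \to \infty$, both terms on the right vanish as $n\to\infty$, establishing \eqref{eq:rate} with $M_n \equiv M$.

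The step with genuine content is the denominator bound, i.e.\ Lemma~\ref{lem:den}, so that is where I expect the effort to lie. The natural route lower-bounds $D_n$ by the integral over $G_n$, applies Jensen's inequality to the normalized restriction of $\Pi$ to $G_n$ to pull the exponent outside the average, and thereby reduces matters to a Chebyshev bound on the $\Pi$-average over $G_n$ of the excess empirical risk $R_n(\theta)-R_n(\theta^\star)$: this average has mean at most $\eps_n^r$ (since $m(\theta,\theta^\star)\le\eps_n^r$ on $G_n$) and variance of order $\eps_n^r/n$ (since $P(\ell_\theta-\ell_{\theta^\star})^2 = v(\theta,\theta^\star) + m(\theta,\theta^\star)^2 \lesssim \eps_n^r$ on $G_n$, after exchanging the $\Pi$- and $P^n$-integrals), so Chebyshev puts it below $2\eps_n^r$ off an event of probability $O((n\eps_n^r)^{-1})$. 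A minor point worth checking is that Condition~\ref{cond:loss}'s simultaneous requirements---$\omega$ in a fixed interval and $\delta$ small---are met here, which they are, since $\omega$ is a fixed admissible learning rate and $\delta = M\eps_n \to 0$.
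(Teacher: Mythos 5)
Your argument is correct and the main chain of reasoning --- decomposing $\Pi_n(A_n)$ via the event $\{D_n > b_n\}$, bounding the numerator in expectation with Fubini, independence, and Condition~\ref{cond:loss}, lower-bounding the denominator in probability over the $(m\vee v)$-neighborhood $G_n$, and finally choosing the constant $M$ so that $K\omega M^r > C_1 + 2\omega$ --- is exactly the paper's. The only place you diverge is in your sketch of Lemma~\ref{lem:den}: you apply Jensen's inequality to the restricted and renormalized prior $\Pi^{G_n}$ and then Chebyshev's inequality to the prior-averaged excess empirical risk $W_n := \int_{G_n}\{R_n(\theta)-R_n(\theta^\star)\}\,\Pi^{G_n}(d\theta)$, using $P^nW_n \le \eps_n^r$ and $\mathrm{Var}(W_n)\lesssim \eps_n^r/n$ (the latter from $P(\ell_\theta-\ell_{\theta^\star})^2 = v+m^2 \lesssim \eps_n^r$ on $G_n$, the iid structure, and Cauchy--Schwarz). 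The paper instead restricts the $D_n$ integral to $G_n$ intersected with the event where a $\theta$-wise \emph{standardized} excess risk $Z_n(\theta)$ is at most $(n\eps_n^r)^{1/2}$, then controls the prior mass of the bad slice via Markov's inequality, Fubini, and a pointwise Chebyshev bound $P^n\{|Z_n(\theta)|\ge (n\eps_n^r)^{1/2}\}\le (n\eps_n^r)^{-1}$. Both variants are standard, give the same $O((n\eps_n^r)^{-1})$ tail bound on $\{D_n \le b_n\}$, and neither requires anything beyond the stated second-moment control on $G_n$, so your sketch would go through with no essential change; the rest of your proof is identical to the paper's up to unimportant constants.
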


For a brief sketch of the proof bound the posterior probability of $A_n$ by
\begin{align*}
\Pi_n(A_n) & \leq \frac{N_n(A_n)}{D_n} \, 1(D_n > b_n) + 1(D_n \leq b_n) \\
& \leq b_n^{-1} N_n(A_n) + 1(D_n \leq b_n), 
\end{align*}
where $b_n$ is as above.
Taking expectation of both sides, and applying \eqref{eq:prior.bound.tmp} and the consequence \eqref{eq:consequence} of Condition~\ref{cond:loss}, we get 
\[ P^n \Pi_n(A_n) \leq b_n^{-1} e^{-K \omega n M_n^r \eps_n^r} + (n \eps_n^r)^{-1}. \]
Then the right-hand side is generally of order $(n \eps_n^r)^{-1} \to 0$.  To compare with the results in \citet[][Example~2]{grunwald.2018}, their upper bound on $P^n \Pi_n(A_n)$ is $O(M_n^{-1})$, which vanishes arbitrarily slowly. 

For the case where the risk minimizer in \eqref{eq:target} is not unique, certain modifications of the above argument can be made, similar to those in \citet[][Theorem~2.4]{kleijn}.  Roughly, to our Theorem~\ref{thm:rate} above, we would add the requirements that \eqref{eq:prior.rate} and Condition~\ref{cond:loss} hold uniformly in $\theta^\star \in \Theta^\star$, where $\Theta^\star$ is the set of risk minimizers.  Then virtually the same proof shows that $P^n\Pi_n(\{\theta: d(\theta, \Theta^\star) \gtrsim \eps_n\}) \to 0$, where $d(\theta,\Theta^\star) = \inf_{\theta^\star \in \Theta^\star} d(\theta,\theta^\star)$.

Theorem~\ref{thm:rate} is quite flexible and can be applied in a range of settings; see Section~\ref{S:examples}.  However, one case in which it cannot be directly applied is when $n \eps_n^r$ is bounded.  For example, in sufficiently smooth finite-dimensional problems, we have $r=2$ and the target rate is $\eps_n = n^{-1/2}$.  The difficulty is caused by the prior bound in \eqref{eq:prior.rate}, since it is impossible---at least with a fixed prior---to assign mass bounded away from 0 to a shrinking neighborhood of $\theta^\star$.  One option is to add a logarithmic factor to the rate, i.e., take $\eps_n = (\log n)^k n^{-1/2}$, so that $e^{-C n \eps_n^2}$ is a power of $n^{-1/2}$.  Alternatively, a refinement of the proof of Theorem~\ref{thm:rate} lets us avoid slowing down the rate.  

\begin{theorem}
\label{thm:rate.rootn}
Consider a finite-dimensional $\theta$, taking values in $\Theta \subseteq \RR^q$ for some $q \geq 1$.  Suppose that the target rate $\eps_n$ is such that $n\eps_n^r$ is bounded for some constant $r>0$.  If the prior $\Pi$ satisfies 
\begin{equation}
\label{eq:prior.rootn}
\Pi(\{\theta: m(\theta,\theta^\star) \vee v(\theta,\theta^\star) \leq \eps_n^2\}) \gtrsim \eps_n^q, 
\end{equation}
and if Condition~\ref{cond:loss} holds, then the Gibbs posterior distribution in \eqref{eq:gibbs}, with any learning rate $\omega \in (0,\bar\omega)$, has asymptotic concentration rate $\eps_n$ at $\theta^\star$ with respect to any divergence $d(\theta,\theta^\star)$ satisfying $\|\theta-\theta^\star\|\lesssim d(\theta,\theta^\star)\lesssim \|\theta-\theta^\star\|$ and for any diverging, positive sequence $M_n$ in Definition~\ref{def:rate}.  
\end{theorem}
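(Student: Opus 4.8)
The plan is to keep the ratio bound that drives the proof of Theorem~\ref{thm:rate} --- writing $A_n = \{\theta : d(\theta,\theta^\star) > M_n\eps_n\}$, $\Pi_n(A_n) \le b_n^{-1} N_n(A_n) + 1(D_n \le b_n)$ for a threshold $b_n$ to be chosen --- but to sharpen its two inputs so that they remain informative in the regime of interest, $n\eps_n^r \asymp 1$ (e.g.\ $\eps_n \asymp n^{-1/r}$), where Lemma~\ref{lem:den}'s bound of order $(n\eps_n^r)^{-1}$ on $P^n(D_n \le b_n)$ and the crude estimate $\E N_n(A_n) \le e^{-K\omega n M_n^r\eps_n^r}$ are both too weak. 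Fix any $\omega \in (0,\bar\omega)$ and write $G_n = \{\theta : m(\theta,\theta^\star)\vee v(\theta,\theta^\star) \le \eps_n^r\}$. I use throughout that $d(\theta,\theta^\star) \asymp \|\theta-\theta^\star\|$ and that, because the exponent $q$ in \eqref{eq:prior.rootn} is the dimension of $\Theta$, the prior is non-degenerate at $\theta^\star$ in the companion sense $\Pi(\{\|\theta-\theta^\star\| \le t\}) \lesssim t^q$ for all small $t$ (as holds, e.g., when $\Pi$ has a density bounded in a neighborhood of $\theta^\star$).

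\emph{Denominator.} Rather than controlling $D_n$ only in expectation, I would obtain an in-probability lower bound. Lower-bound $D_n \ge \int_{G_n} e^{-\omega n\{R_n(\theta) - R_n(\theta^\star)\}}\,\Pi(d\theta)$ and apply Jensen's inequality to the convex map $x \mapsto e^{-x}$ against the normalized restriction of $\Pi$ to $G_n$, which gives $D_n \ge \Pi(G_n)\,e^{-\omega W_n}$ with $W_n = \Pi(G_n)^{-1}\int_{G_n} n\{R_n(\theta) - R_n(\theta^\star)\}\,\Pi(d\theta)$. From $\E\{R_n(\theta)-R_n(\theta^\star)\}^2 = v(\theta,\theta^\star)/n + m(\theta,\theta^\star)^2$ and $m(\theta,\theta^\star) = R(\theta)-R(\theta^\star)\ge 0$ one gets $\E|R_n(\theta)-R_n(\theta^\star)| \le (v(\theta,\theta^\star)/n)^{1/2} + m(\theta,\theta^\star)$, hence, by the definition of $G_n$, $\E|W_n| \le (n\eps_n^r)^{1/2} + n\eps_n^r = O(1)$. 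Markov's inequality then supplies, for each $\eta > 0$, a constant $t_\eta$ with $P^n(W_n > t_\eta) \le \eta$ for all $n$; on the complementary event $D_n \ge \Pi(G_n)\,e^{-\omega t_\eta} \ge c_\eta\,\eps_n^q$ by \eqref{eq:prior.rootn}. Thus $b_n$ may be taken as $c_\eta\,\eps_n^q$, and $P^n(D_n \le b_n) \le \eta$ --- this replaces the non-vanishing factor $(n\eps_n^r)^{-1}$ of Lemma~\ref{lem:den}.

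\emph{Numerator.} Fubini, independence of $U^n$, and Condition~\ref{cond:loss} still yield $\E N_n(A_n) \le \int_{A_n} e^{-K\omega n\, d(\theta,\theta^\star)^r}\,\Pi(d\theta)$. I would split $A_n$ at a fixed small radius $\rho_0$. On $\{d(\theta,\theta^\star) > \rho_0\}$, Condition~\ref{cond:loss} bounds the integrand by $e^{-K\omega n\rho_0^r}$, so that region contributes at most $e^{-K\omega n\rho_0^r}$, exponentially small in $n$. On the local part $\{M_n\eps_n < d(\theta,\theta^\star) \le \rho_0\}$, the equivalence $d \asymp \|\cdot-\theta^\star\|$ dominates the integrand by $e^{-K'\omega n\|\theta-\theta^\star\|^r}$ and (by the conventions above) the prior by a constant multiple of Lebesgue measure; the change of variables $\theta = \theta^\star + \eps_n u$, together with $n\|\theta-\theta^\star\|^r = (n\eps_n^r)\|u\|^r$ and $n\eps_n^r$ bounded away from $0$, turns the local integral into $\lesssim \eps_n^q \int_{\|u\| > c M_n} e^{-K'\omega (n\eps_n^r)\|u\|^r}\,du =: \eps_n^q\, g(M_n)$, where $g(M_n) \to 0$ as $M_n \to \infty$ because the integral is a stretched-exponential tail. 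Therefore $\E N_n(A_n) \le \eps_n^q\, g(M_n) + e^{-K\omega n\rho_0^r}$: the explicit factor $\eps_n^q$ is precisely what offsets the $\eps_n^{-q}$ from $b_n^{-1}$, no matter how slowly $M_n$ diverges.

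\emph{Putting it together.} Taking expectations in the ratio bound and inserting the two estimates, $P^n\Pi_n(A_n) \le (c_\eta\eps_n^q)^{-1}\E N_n(A_n) + P^n(D_n \le c_\eta\eps_n^q) \le C\, c_\eta^{-1}\, g(M_n) + c_\eta^{-1}\eps_n^{-q} e^{-K\omega n\rho_0^r} + \eta$ for an absolute constant $C$; the first term vanishes because $M_n \to \infty$, and the second because $\eps_n^{-q}$ grows at most polynomially in $n$ (since $n\eps_n^r \asymp 1$) while $e^{-K\omega n\rho_0^r}$ decays exponentially. Hence $\limsup_n P^n\Pi_n(A_n) \le \eta$, and letting $\eta \downarrow 0$ completes the proof. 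The main obstacle --- and the reason a routine adaptation of Theorem~\ref{thm:rate} does not suffice --- is that concentration is demanded along \emph{every} diverging $M_n$: this forbids absorbing the unavoidable $\eps_n^{-q}$ loss in the denominator into an exponential-in-$M_n$ numerator factor (which would only give the result for $M_n \gtrsim (\log n)^{1/r}$), and so forces both the in-probability treatment of $D_n$ and the local/tail dissection with its exact $\eps_n^q$-scaling; this last step, where the hypotheses $d \asymp \|\theta-\theta^\star\|$, $n\eps_n^r \asymp 1$, and the non-degeneracy of $\Pi$ near $\theta^\star$ all enter, is the technically most delicate point.
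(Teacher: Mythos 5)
Your proof is correct and reaches the same conclusion by a genuinely different route. For the numerator, the paper decomposes $\{d(\theta,\theta^\star)>M_n\eps_n\}$ into countably many annular shells $\{tM_n\eps_n<d(\theta,\theta^\star)<(t+1)M_n\eps_n\}$, $t=1,2,\ldots$, bounds the exponentiated excess loss and the prior mass on each shell, and sums the resulting series to extract the factor $\eps_n^q\cdot o(1)$; you instead split at a fixed small radius $\rho_0$, treat the far tail as exponentially small in $n$, and handle the local ball with a single change of variables $\theta=\theta^\star+\eps_n u$ that directly produces the $\eps_n^q$ prefactor times a stretched-exponential tail integral vanishing in $M_n$. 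Your dissection is arguably cleaner and also makes explicit that Condition~\ref{cond:loss} controls the integrand only out to a small fixed radius, a point the paper's shell sum leaves implicit. For the denominator, the paper re-scales the Chebyshev threshold in Lemma~\ref{lem:den} by an arbitrary constant $Q>1$ to get $P^n(D_n\le\tfrac12\Pi(G_n)e^{-2Q\omega n\eps_n^r})\le 2/Q$; you use Jensen's inequality against the prior restricted to $G_n$ followed by an $L_1$-plus-Markov bound on the random exponent $W_n$, which bypasses the auxiliary set $\Z_n$ of that lemma entirely. Both routes deliver $D_n\gtrsim\eps_n^q$ outside an event of probability at most $\eta$ with $\eta$ arbitrary, and both close with the same $\limsup\le\eta$, $\eta\downarrow 0$ endgame. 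One remark: both you and the paper silently use the companion hypothesis, not in the theorem statement, that $\Pi$ assigns mass $\lesssim t^q$ to Euclidean $t$-balls around $\theta^\star$ (the paper invokes ``Since $\Pi$ has a bounded density on the $q$-dimensional parameter space''); you helpfully flag this as an extra assumption rather than leaving it implicit.
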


The learning rate is critical to the Gibbs posterior's performance, but in applications it may be challenging to determine the upper bound $\bar\omega$ for which Condition~\ref{cond:loss} holds.  For a simple illustration, suppose the excess loss $\ell_\theta-\ell_\theta^\star$ is normally distributed with variance $\sigma^2(\theta)$  satisfying $\sigma^2(\theta)\leq c\{R(\theta)-R(\theta^\star)\}$, a kind of Bernstein condition.  In this case, 
Condition~\ref{cond:loss} holds for $d(\theta,\theta^\star) = \{R(\theta)-R(\theta^\star)\}^{1/2}$, with $r=2$, if $\omega < 2c^{-1}$.  Bernstein conditions can be verified in many practical examples, but the factor $c$ would rarely be known.  Consequently, we need $\omega$ to be sufficiently small, but the meaning of ``sufficiently small'' depends on unknowns involving $P$.  However, any positive, vanishing learning rate sequence $\omega=\omega_n$ satisfies $\omega_n \in (0,\bar\omega)$ for all sufficiently large $n$.  And if $\omega_n$ vanishes arbitrarily slowly, then it has no effect on the Gibbs posterior concentration rate; see Section~\ref{SS:quantile.reg}.  All we require to accommodate a vanishing learning rate is a slightly stronger $\omega_n$-dependent version of the prior concentration bound in \eqref{eq:prior.rate}.  

\begin{theorem}
\label{thm:rate.seq}
Let $\eps_n$ be a vanishing sequence and $\omega_n$ be a learning rate sequence satisfying $n\omega_n\eps_n^r \to \infty$ for a constant $r>0$.  Consider a Gibbs posterior distribution $\Pi_n=\Pi_n^{\omega_n}$ in \eqref{eq:gibbs} based on this sequence of learning rates.  If the prior satisfies 
\begin{equation}
\label{eq:prior.seq}
\log\Pi(\{\theta: m(\theta,\theta^\star) \vee v(\theta,\theta^\star) \leq \eps_n^r\}) \geq -C n \omega_n \eps_n^r, 
\end{equation}
and if Condition~\ref{cond:loss} holds for $\omega_n$, then the Gibbs posterior distribution in \eqref{eq:gibbs}, with learning rate sequence $\omega_n$, has concentration rate $\eps_n$ at $\theta^\star$ for a sufficiently large constant $M>0$ in Definition~\ref{def:rate}.
\end{theorem}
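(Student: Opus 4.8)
The plan is to run the proof of Theorem~\ref{thm:rate} essentially verbatim, carrying the learning‑rate sequence $\omega_n$ through every exponent so that the effective ``budget'' $n\omega_n\eps_n^r$ plays the role that $n\eps_n^r$ plays there. Fix $M>0$ (to be chosen large at the end), put $A_n=\{\theta:d(\theta,\theta^\star)>M\eps_n\}$ and $G_n=\{\theta:m(\theta,\theta^\star)\vee v(\theta,\theta^\star)\le\eps_n^r\}$, and as in \eqref{eq:gibbs.frac} write $\Pi_n(A_n)=N_n(A_n)/D_n$ with the integrands now formed from $\omega_n$. With $b_n=\tfrac12\Pi(G_n)e^{-2\omega_n n\eps_n^r}$, use the elementary split $\Pi_n(A_n)\le b_n^{-1}N_n(A_n)+1(D_n\le b_n)$ and take $P^n$‑expectations; the two resulting terms are handled separately.

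For the numerator, since $\omega_n\in(0,\bar\omega)$ for all large $n$ and $M\eps_n\to 0$, Condition~\ref{cond:loss} applies with $\delta=M\eps_n$ and $\omega=\omega_n$; combining this with Fubini and the independence identity $P^ne^{-\omega_n n\{R_n(\theta)-R_n(\theta^\star)\}}=\{Pe^{-\omega_n(\ell_\theta-\ell_{\theta^\star})}\}^n$ gives $P^nN_n(A_n)\le e^{-K\omega_n nM^r\eps_n^r}$, the $\omega_n$‑analogue of \eqref{eq:consequence}. For the denominator I would invoke the argument behind Lemma~\ref{lem:den} with $\omega$ replaced by $\omega_n$: restricting the integral defining $D_n$ to $G_n$ and applying Jensen's inequality to $x\mapsto e^{-x}$ yields the pathwise bound $D_n\ge\Pi(G_n)\exp\{-\omega_n nW_n\}$, where $W_n=\Pi(G_n)^{-1}\int_{G_n}\PP_n(\ell_\theta-\ell_{\theta^\star})\,\Pi(d\theta)$ has mean at most $\eps_n^r$ and, by a Cauchy--Schwarz/Fubini bound on its second moment (using the definitions of $m$ and $v$), variance at most $n^{-1}(\eps_n^r+\eps_n^{2r})$. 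Chebyshev's inequality then gives $P^n(W_n>2\eps_n^r)\lesssim(n\eps_n^r)^{-1}$, and on the complement $D_n\ge 2b_n$, so $P^n(D_n\le b_n)\lesssim(n\eps_n^r)^{-1}\to 0$ — the convergence holding because $\omega_n<\bar\omega$ forces $n\eps_n^r\ge\bar\omega^{-1}n\omega_n\eps_n^r\to\infty$.

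Assembling the pieces, $P^n\Pi_n(A_n)\le b_n^{-1}e^{-K\omega_n nM^r\eps_n^r}+O((n\eps_n^r)^{-1})$; substituting $b_n^{-1}=2\Pi(G_n)^{-1}e^{2\omega_n n\eps_n^r}$ and the prior bound \eqref{eq:prior.seq}, which is exactly $\Pi(G_n)^{-1}\le e^{Cn\omega_n\eps_n^r}$, bounds the first term by $2\exp\{-(KM^r-C-2)\,n\omega_n\eps_n^r\}$. Choosing $M$ with $KM^r>C+2$ and using $n\omega_n\eps_n^r\to\infty$ makes this term vanish, completing the proof.

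The bookkeeping is routine; the one point that needs genuine care is the denominator estimate, namely confirming that the in‑probability lower bound on $D_n$ from Lemma~\ref{lem:den} degrades gracefully as $\omega$ is allowed to shrink and that the Chebyshev control of $W_n$ still yields a vanishing error. This is precisely where the two modified hypotheses pull their weight: $n\omega_n\eps_n^r\to\infty$ simultaneously guarantees $n\eps_n^r\to\infty$ (for the denominator error) and that the exponent $-(KM^r-C-2)n\omega_n\eps_n^r$ diverges, while the $\omega_n$‑scaled prior bound \eqref{eq:prior.seq} is calibrated so that $\Pi(G_n)^{-1}$ cannot overwhelm the exponential gain $e^{-K\omega_n nM^r\eps_n^r}$ supplied by Condition~\ref{cond:loss}.
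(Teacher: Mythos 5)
Your proof is correct and follows the same overall strategy as the paper's (which is recorded in Appendix~A.3 and is a near‑verbatim transcription of the Theorem~\ref{thm:rate} proof with $\omega$ replaced by $\omega_n$): split $\Pi_n(A_n)\le b_n^{-1}N_n(A_n)+1(D_n\le b_n)$, control $P^nN_n(A_n)$ by $e^{-K\omega_n nM^r\eps_n^r}$ via Condition~\ref{cond:loss} and Fubini, control $P^n(D_n\le b_n)$ by a second‑moment argument, and then close using the prior bound \eqref{eq:prior.seq}. The one place you deviate is in how you obtain the denominator tail bound: you label it ``the argument behind Lemma~\ref{lem:den}'' but what you actually describe is a Jensen‑based variant — restrict to $G_n$, apply Jensen to $e^{-x}$ to get $D_n\ge\Pi(G_n)e^{-\omega_n nW_n}$ with $W_n$ the $\Pi^{G_n}$‑averaged empirical excess risk, then Chebyshev on the scalar $W_n$ — whereas Lemma~\ref{lem:den} instead writes $nR_n(\theta)-nR_n(\theta^\star)=nm(\theta)+\{nv(\theta)\}^{1/2}Z_n(\theta)$, Chebyshevs the standardized process $Z_n(\theta)$ pointwise, and passes the bound through the prior via Fubini and Markov. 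Both routes legitimately yield $P^n(D_n\le b_n)\lesssim(n\eps_n^r)^{-1}$ and both require $v(\cdot,\theta^\star)\le\eps_n^r$ on $G_n$ (you need it for the variance of $W_n$, Lemma~\ref{lem:den} needs it to control $\{nv(\theta)\}^{1/2}Z_n(\theta)$), so nothing is lost; if anything your version is a slightly more compact derivation of the same fact. Your observation that $\omega_n<\bar\omega$ implies $n\eps_n^r\ge\bar\omega^{-1}n\omega_n\eps_n^r\to\infty$ is exactly the justification the paper relies on implicitly when it claims Lemma~\ref{lem:den} applies ``with no alterations,'' and spelling it out is an improvement. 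The final assembly and the choice $KM^r>C+2$ match the paper.
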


The proof of Theorem~\ref{thm:rate.seq} is almost identical to that of Theorem~\ref{thm:rate}, hence omitted.  But to see the basic idea, we mention two key observations.  First, since Condition~\ref{cond:loss} holds for $\omega_n$ for all sufficiently large $n$, and since $\omega_n$ is deterministic, by the same argument producing the bound in \eqref{eq:consequence}, we get
\[ P^nN_n(A_n)\leq e^{-Kn\omega_nM_n^r\eps_n^r} \quad \text{for all {\em sufficiently large} $n$}.\]
Second, the same argument producing the bound in \eqref{eq:prior.bound.tmp} shows that
\[P^n\bigl(D_n \leq \tfrac12\Pi(G_n)e^{-2 n\omega_n\eps_n^r}\bigr) \leq P^n\bigl(D_n \leq \tfrac12e^{-(C+2)n\omega_n\eps_n^r}\bigr) \leq (n\omega_n\eps_n^r)^{-1}.\]
Then the only difference between this situation and that in Theorem~\ref{thm:rate} is that, here, the numerator bound only holds for ``sufficiently large $n$,'' instead of for all $n$.  This makes Theorem~\ref{thm:rate.seq} is slightly weaker than Theorem~\ref{thm:rate} since there are no finite-sample bounds. 

When $\omega_n \equiv \omega$ the constant learning rate is absorbed by $C$ and there is no difference between the prior bounds in \eqref{eq:prior.rate} and \eqref{eq:prior.seq}.  But, the prior probability assigned to the $(m \vee v)$-neighborhood of $\theta^\star$ does not depend on $\omega_n$, so if it satisfies \eqref{eq:prior.rate}, then the only way it could also satisfy \eqref{eq:prior.seq} is if $\eps_n$ is bigger than it would have been without a vanishing learning rate.  In other words, Theorem~\ref{thm:rate} requires $n\eps_n^r\rightarrow\infty$ whereas Theorem~\ref{thm:rate.seq} requires $n\omega_n\eps_n^r\rightarrow \infty$, which implies that for a given vanishing $\omega_n$ Theorem~\ref{thm:rate} potentially can accommodate a faster rate $\eps_n$.  Therefore, we see that a vanishing learning rate can slow down the Gibbs posterior concentration rate if it does not vanish arbitrarily slowly.  There are applications that require the learning rate to vanish at a particular $n$-dependent rate, and these tend to be those where adjustments like in Section~\ref{s:extensions} are needed; see Sections~\ref{SSS:heavy} and \ref{SSS:tsybakov}.

If we can use the data to estimate $\bar\omega$ consistently, then it makes sense to choose a learning rate sequence depending on this estimator. Suppose our data-dependent learning rate $\hat\omega_n$ satisfies $\hat\omega_n < \bar\omega$ with probability converging to $1$ as $n\rightarrow\infty$.  For $\omega\equiv \hat\omega_n$ the conclusion of Theorem~\ref{thm:rate} holds for all sufficiently large $n$; and see Section 4.2 in the Supplementary Material.  One advantage of this strategy is that it avoids using a vanishing learning rate, which may slow concentration.

\begin{theorem}
\label{thm:rate.est}
Fix a positive deterministic learning rate sequence $\omega_n$ such that the conditions of Theorem~\ref{thm:rate.seq} hold and as a result $\Pi_n^{\omega_n}$ has asymptotic concentration rate $\eps_n$.  Consider a random learning rate sequence $\hat\omega_n$ satisfying 
\begin{equation} P^n(\omega_n/2<\hat\omega_n<\omega_n)\rightarrow 1, \quad n \to \infty. \end{equation}
Then $\Pi_n^{\hat\omega_n}$, the Gibbs posterior distribution in \eqref{eq:gibbs} scaled by the random learning rate sequence $\hat\omega_n$, also has concentration rate $\eps_n$ at $\theta^\star$ for a sufficiently large constant $M>0$ in Definition~\ref{def:rate}.  
\end{theorem}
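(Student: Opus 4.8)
The plan is to reduce the random-learning-rate case to the deterministic case already handled by Theorem~\ref{thm:rate.seq} by exploiting monotonicity of the Gibbs posterior numerator and denominator in $\omega$ together with the high-probability sandwiching $\omega_n/2 < \hat\omega_n < \omega_n$. Write $A_n = \{\theta : d(\theta,\theta^\star) > M\eps_n\}$ and, as in \eqref{eq:gibbs.frac}, express $\Pi_n^{\hat\omega_n}(A_n) = N_n^{\hat\omega_n}(A_n)/D_n^{\hat\omega_n}$, where $N_n^{\omega}(A_n) = \int_{A_n} e^{-\omega n\{R_n(\theta)-R_n(\theta^\star)\}}\,\Pi(d\theta)$ and $D_n^\omega$ is the same integral over $\Theta$. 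First I would restrict attention to the event $E_n = \{\omega_n/2 < \hat\omega_n < \omega_n\}$, which has $P^n$-probability tending to $1$; since $P^n\Pi_n^{\hat\omega_n}(A_n) \leq P^n\{\Pi_n^{\hat\omega_n}(A_n)\,1(E_n)\} + P^n(E_n^\comp)$ and the second term vanishes, it suffices to bound the first term.

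Next I would bound the numerator and denominator separately on $E_n$. For the numerator, on $A_n$ the exponent $-\omega n\{R_n(\theta)-R_n(\theta^\star)\}$ is controlled by Condition~\ref{cond:loss} only in expectation, so I cannot use pathwise monotonicity naively; instead I would pass to expectations first. The key observation is that $\hat\omega_n$ is sandwiched, so I split according to the sign of the empirical excess risk is not quite available — rather, the clean route is to bound $N_n^{\hat\omega_n}(A_n) \le N_n^{\omega_n/2}(A_n)$ is false in general because the integrand can exceed $1$. The correct move is: on $E_n$, use $e^{-\hat\omega_n t} \le e^{-(\omega_n/2)t} + e^{-\omega_n t}$ for all real $t$ (valid since one of the two terms on the right always dominates $e^{-\hat\omega_n t}$ depending on whether $t\ge 0$ or $t<0$), hence $N_n^{\hat\omega_n}(A_n) \le N_n^{\omega_n/2}(A_n) + N_n^{\omega_n}(A_n)$ pointwise. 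Taking $P^n$-expectations and applying the bound \eqref{eq:consequence} from Condition~\ref{cond:loss} at both $\omega_n/2$ and $\omega_n$ (each being in $(0,\bar\omega)$ eventually, and each satisfying the rate requirement $n\omega_n\eps_n^r\to\infty$ up to the harmless factor $2$), I get $P^n N_n^{\hat\omega_n}(A_n) \le e^{-K(\omega_n/2)nM^r\eps_n^r} + e^{-K\omega_n nM^r\eps_n^r} \le 2e^{-(K/2)\,n\omega_n M^r\eps_n^r}$ for all sufficiently large $n$.

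For the denominator, the analogous inequality $e^{-\hat\omega_n t} \ge$ something is awkward, so here I would instead use that on $E_n$ we have, for the relevant integration region $G_n = \{\theta: m(\theta,\theta^\star)\vee v(\theta,\theta^\star)\le \eps_n^r\}$ used in Lemma~\ref{lem:den}, the simple pathwise bound $D_n^{\hat\omega_n} \ge \int_{G_n} e^{-\hat\omega_n n\{R_n(\theta)-R_n(\theta^\star)\}}\,\Pi(d\theta)$, and then apply the denominator argument of Lemma~\ref{lem:den}/\eqref{eq:prior.bound.tmp} uniformly: the proof of that bound only uses that the excess empirical risk on $G_n$ concentrates near $m(\theta,\theta^\star)\le\eps_n^r$ via a second-moment (Chebyshev) argument, and that argument gives $P^n\big(\inf_{\omega\in[\omega_n/2,\omega_n]} D_n^\omega \le \tfrac12\Pi(G_n)e^{-2n\omega_n\eps_n^r}\big)\le (n\omega_n\eps_n^r)^{-1}$ because replacing $\omega$ by the larger $\omega_n$ in the exponent only shrinks the lower bound we must beat and the ``bad'' event is the same Chebyshev event on $\PP_n\{(\ell_\theta-\ell_{\theta^\star})-m(\theta,\theta^\star)\}$ that does not involve $\omega$ at all. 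Hence on $E_n$, $D_n^{\hat\omega_n} \ge \tfrac12\Pi(G_n)e^{-2n\omega_n\eps_n^r} =: b_n$ with $P^n$-probability at least $1-(n\omega_n\eps_n^r)^{-1}$, and by \eqref{eq:prior.seq}, $b_n \gtrsim e^{-(C+2)n\omega_n\eps_n^r}$.

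Finally I would combine these as in the sketch following Theorem~\ref{thm:rate}: on $E_n$,
\[
\Pi_n^{\hat\omega_n}(A_n) \le b_n^{-1} N_n^{\hat\omega_n}(A_n)\,1(D_n^{\hat\omega_n}>b_n) + 1(D_n^{\hat\omega_n}\le b_n),
\]
so taking expectations, $P^n\{\Pi_n^{\hat\omega_n}(A_n)1(E_n)\} \le b_n^{-1}\,2e^{-(K/2)n\omega_n M^r\eps_n^r} + (n\omega_n\eps_n^r)^{-1} + P^n(E_n^\comp)$. Choosing $M$ large enough that $(K/2)M^r > C+2$ makes the first term vanish, the second vanishes by the rate hypothesis $n\omega_n\eps_n^r\to\infty$, and the third vanishes by assumption; this yields $P^n\Pi_n^{\hat\omega_n}(A_n)\to 0$, which is the claim. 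The main obstacle, and the step deserving the most care, is the denominator bound: one must check that the Chebyshev/second-moment argument underlying Lemma~\ref{lem:den} is genuinely uniform over $\omega\in[\omega_n/2,\omega_n]$ — i.e., that the only $\omega$-dependence sits in the deterministic exponential factor $e^{-2n\omega\eps_n^r}$ and not in the random event being controlled — so that a single application of Chebyshev handles all $\omega$ in the interval simultaneously, and in particular the data-dependent choice $\hat\omega_n$.
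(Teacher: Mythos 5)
Your proof is correct and follows essentially the same route as the paper's: restrict to the event $\{\omega_n/2<\hat\omega_n<\omega_n\}$, bound the numerator by $N_n^{\omega_n/2}(A_n)+N_n^{\omega_n}(A_n)$, and observe that the Chebyshev event underlying Lemma~\ref{lem:den} is $\omega$-free so the denominator lower bound holds uniformly for $\omega\le\omega_n$. The only cosmetic difference is that you obtain the numerator bound via the pointwise inequality $e^{-\hat\omega_n t}\le e^{-(\omega_n/2)t}+e^{-\omega_n t}$, whereas the paper splits the integral over $\{\theta:R_n(\theta)>R_n(\theta^\star)\}$ and its complement; these are the same bound.
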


\subsection{Checking Condition~\ref{cond:loss}}
\label{ss:check1}

Of course, Condition~\ref{cond:loss} is only useful if it can be checked in practically relevant examples.  As mentioned in Section~\ref{SS:discussion}, Gr\"unwald and Mehta's strong central and witness conditions are sufficient for Condition~\ref{cond:loss}.  A pair of slightly stronger, but practically verifiable conditions require the excess loss is sub-exponential with first and second moments that obey a Bernstein condition.  For the examples we consider in Section~\ref{S:examples} where Condition~\ref{cond:loss} can be used, these two conditions are convenient.

\subsubsection{Bounded excess losses}
\label{sss:check1}

For bounded excess losses, i.e., $\ell_\theta(u) - \ell_{\theta^\star}(u) < C$ for all $(\theta,u)$, Lemma~7.26 in \citep{wasserman} gives:
\begin{equation}
\label{eq:wasserman}
Pe^{-\omega(\ell_\theta - \ell_{\theta^\star})}\leq \exp\bigl[ -\omega m(\theta,\theta^\star)+\omega^2v(\theta,\theta^\star)\bigl\{\tfrac{\exp(C\omega)-1-C\omega}{C^2\omega^2}\bigr\} \bigr].
\end{equation}
Whether Condition~\ref{cond:loss} holds depends on the choice of $\omega$ and the relationship between $m(\theta,\theta^\star)$ and $v(\theta,\theta^\star)$ as defined by the Bernstein condition:
\begin{equation}
\label{eq:bernstein}
v(\theta,\theta^\star)\leq C_1m(\theta,\theta^\star)^\alpha, \quad \alpha \in (0,1], \quad C_1 > 0. 
\end{equation}
Denote the bracketed expression in the exponent of \eqref{eq:wasserman} by $C(\omega)=(C^2\omega^2)^{-1}\{\exp(C\omega)-1-C\omega\}$.  When $\alpha = 1$, the $m$ and $v$ functions are of the same order and, if $d(\theta,\theta^\star)=m(\theta,\theta^\star)$, and $\omega C(\omega) \leq (2C_1)^{-1}$ then Condition~\ref{cond:loss} holds with $K = 1/2$.  Since $C(\omega)\approx C\omega$ for small $\omega$, it suffices to take the learning rate a sufficiently small constant.

On the other hand, when $v$ is larger than $m$, i.e., the Bernstein condition holds with $\alpha < 1$, Condition~\ref{cond:loss} requires that $\omega$ depend on $\alpha$ and $m(\theta,\theta^\star)$.  Suppose $m(\theta,\theta^\star)\geq d(\theta,\theta^\star) > \eps_n$.  For all small enough $\eps_n$, we can set $\omega_n = (4C)^{-1/2}\eps_n^{(1-\alpha)/2}$ and Condition~\ref{cond:loss} is again satisfied with $K = 1/2$.  

The above strategies are implemented in the classification examples in Section~\ref{SS:classification} where we also discuss connections to the Tsybakov \citep{tsybakov} and Massart \citep{massart.nedelec.2006} conditions.  For the former, see Theorem~22 and the subsequent discussion in \citet{grunwald.2018}, where the learning rate ends up being a vanishing sequence depending on the concentration rate and the Bernstein exponent $\alpha$.     

\subsubsection{Sub-exponential excess losses}

Unbounded but light-tailed loss differences $\ell_\theta - \ell_{\theta^\star}$ may also satisfy Condition~\ref{cond:loss}.  Both {\em sub-exponential} and {\em sub-Gaussian} random variables \citep[][Sec.~2.3--4]{boucheron.etal.2012} admit an upper bound on their moment-generating functions.  When the loss difference $\ell_\theta - \ell_{\theta^\star}$ is sub-Gaussian
\begin{equation} P e^{-\omega (\ell_\theta - \ell_{\theta^\star})} \leq \exp\bigl\{ \tfrac{\omega^2}{2}  \sigma^2(\theta,\theta^\star) - \omega m(\theta,\theta^\star) \bigr\}, \end{equation}
for all $\omega$, where the variance proxy $\sigma^2(\theta,\theta^\star)$ may depend on $(\theta,\theta^\star)$.  If $\ell_\theta - \ell_{\theta^\star}$ is sub-exponential, then the above bound holds for all $\omega \leq (2b)^{-1}$ for some $b < \infty$ indexing the tail behavior of $P$.  

If $\sigma^2(\theta,\theta^\star)$ is upper-bounded by $L m(\theta,\theta^\star)$ for a constant $L > 0$, then the above bound can be rewritten as
\begin{equation}
P e^{-\omega (\ell_\theta - \ell_{\theta^\star})} \leq \exp\bigl\{ - \omega m(\theta,\theta^\star)(1-\tfrac{\omega L}{2}) \bigr\}.
\end{equation}
Then Condition~\ref{cond:loss} holds if the loss difference is sub-Gaussian and sub-exponential if $\omega<2L^{-1}$ and $\omega< 2L^{-1} \wedge (2b)^{-1}$, respectively.  

In practice it may be awkward to assume $\ell_\theta(U) - \ell_{\theta^\star}(U)$ is sub-exponential, but in certain problems it is sufficient to make such an assumption about features of $U$, which may be more reasonable. See Section~\ref{SS:pred_sq} for an application of this idea to a fixed-design regression problem where the fact the response variable is sub-Gaussian implies the excess loss is itself sub-Gaussian.  

\section{Extensions}
\label{s:extensions}

\subsection{Locally sub-exponential type loss functions}
\label{SSS:local.exp.loss}

In some cases the moment generating function bound in Condition~\ref{cond:loss} can be verified in a neighborhood of $\theta^\star$ but not for all $\theta\in\Theta$.  For example, suppose $\theta \mapsto \ell_\theta(u)$ is Lipschitz with respect to a metric $\|\cdot\|$ with uniformly bounded Lipschitz constant $L=L(u)$, and that, for some $\delta>0$,
\begin{align*}
    \|\theta-\theta^\star\| < \delta \implies v(\theta,\theta^\star)\lesssim m(\theta,\theta^\star) \quad \text{and} \quad
\|\theta-\theta^\star\| > \delta \implies v(\theta,\theta^\star)\lesssim m(\theta,\theta^\star)^2.
\end{align*}
That is, the Bernstein condition \eqref{eq:bernstein} holds but for different values of $\alpha$ depending on $\theta$.  This is the case for quantile regression; see Section~\ref{SS:quantile.reg}.  This class of problems, where the Bernstein exponent varies across $\Theta$, apparently has not been considered previously.  For example, \citet{grunwald.2018} only consider cases where the Bernstein exponent $\alpha$ is constant across the entire parameter space; consequently, their results assume $\Theta$ is bounded \citep[e.g.,][Example~10]{grunwald.2018}, so \eqref{eq:bernstein} holds trivially with exponent $\alpha=0$.  

To address this problem, our idea is a simple one.  We propose to introduce a sieve which is large enough that we can safely assume it contains $\theta^\star$ but small enough that, on which, the $m$ and $v$ functions can be appropriately controlled.  Towards this, let $\Theta_n$ be an increasing sequence of subsets of $\Theta$, indexed by the sample size $n$.  The ``size'' of $\Theta_n$ will play an important role in the result below.  While more general sieves are possible, to keep the notion of size concrete, let $\Theta_n = \{\theta \in \Theta: \|\theta\| \leq \Delta_n\}$, so that size is controlled by the non-decreasing sequence $\Delta_n > 0$, which would typically satisfy $\Delta_n \to \infty$ as $n \to \infty$.  The metric $\|\cdot\|$ in the definition of $\Theta_n$ is at the user's discretion, it would be chosen so that $\theta \in \Theta_n$ provides information that can be used to control the excess loss $\ell_\theta - \ell_{\theta^\star}$.  This leads to the following straightforward modification of Condition~\ref{cond:loss}.  

\begin{condition}
\label{cond:loss.b}
For $\Theta_n$ with size controlled by $\Delta_n$, there exists an interval $(0,\bar \omega)$, a constant $r>0$, and a sequence $K_n=K(\Delta_n)>0$ such that, for all $\omega \in (0,\bar \omega)$ and for all small $\delta > 0$, 
\begin{equation}
\label{eq:cond.loss.local}
\theta \in \Theta_n \; \text{ and } \; d(\theta; \theta^\star) > \delta \implies P e^{-\omega
(\ell_\theta-\ell_{\theta^\star})} < e^{-\omega K_n^r\delta^r}. 
\end{equation}
\end{condition}

Aside from the restriction $\Theta_n$, the key difference between the bounds here and in Condition~\ref{cond:loss} is in the exponent.  Instead of there being a constant $K$, there is a sequence $K_n$ which is determined by the sieve's size, controlled by $\Delta_n$.  If the sequence $K_n$ is increasing, as we expect it will be, then we can anticipate that the overall concentration rate would be adversely affected---unless the learning rate is vanishing suitably fast.  The following theorem explains this more precisely.  

Condition~\ref{cond:loss.b} can be used exactly as Condition~\ref{cond:loss} to upper bound $P^n N_n(A_n \cap \Theta_n)$.  However, the Gibbs posterior probability assigned to $A_n \cap \Theta_n^\comp$ must be handled separately.

\begin{theorem}
\label{thm:rate.b}
Let $\Theta_n$ be a sequence of subsets for which the loss function satisfies Condition~\ref{cond:loss.b} for a sequence $K_n>0$, a constant $r>0$, and a learning rate $\omega_n\in(0,\overline\omega)$ for all sufficiently large $n$.  Let $\eps_n$ be a vanishing sequence satisfying $n\omega_nK_n^r\eps_n^r\rightarrow\infty$.  Suppose the prior satisfies
\begin{equation} 
\label{eq:prior.rate.b}
\log\Pi(\{\theta: m(\theta,\theta^\star) \vee v(\theta,\theta^\star) \leq (K_n\eps_n)^r\}) \gtrsim -Cn\omega_nK_n^r\eps_n^r,
\end{equation}
for some $C>0$ and the same $K_n$, $r$, and $\omega_n$ as above.  Then the Gibbs posterior in \eqref{eq:gibbs} satisfies 
\[ \limsup_{n \to \infty} P^n \Pi_n(A_n) \leq \limsup_{n \to \infty} P^n \Pi_n(A_n \cap \Theta_n^\comp). \]
Consequently, if 
\begin{equation} 
\label{eq:post.b}
P^n\Pi_n(\Theta_n^\comp) \to 0 \quad \text{as $n \to \infty$},
\end{equation}  
then the Gibbs posterior has concentration rate $\eps_n$ for all large constants $M>0$ in Definition~\ref{def:rate}. 
\end{theorem}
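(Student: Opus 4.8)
The plan is to split the event $A_n = (A_n \cap \Theta_n) \cup (A_n \cap \Theta_n^\comp)$ and treat the two pieces separately. The piece on $\Theta_n^\comp$ is immediate: $\Pi_n(A_n \cap \Theta_n^\comp) \leq \Pi_n(\Theta_n^\comp)$, so its $P^n$-expectation is exactly the residual term that appears on the right-hand side of the claimed $\limsup$ inequality, and it vanishes precisely under the supplementary hypothesis \eqref{eq:post.b}. It therefore remains to show $P^n \Pi_n(A_n \cap \Theta_n) \to 0$, and for this I would reproduce the argument behind Theorem~\ref{thm:rate} almost verbatim, but with the numerator $N_n(A_n \cap \Theta_n)$ in place of $N_n(A_n)$. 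Writing, for a deterministic threshold $b_n > 0$ to be chosen,
\[ \Pi_n(A_n \cap \Theta_n) \leq b_n^{-1} N_n(A_n \cap \Theta_n) + 1(D_n \leq b_n), \]
and taking $P^n$-expectations, it suffices to bound $P^n N_n(A_n \cap \Theta_n)$ and $P^n(D_n \leq b_n)$ from above.

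For the numerator, Fubini's theorem and independence give $P^n N_n(A_n \cap \Theta_n) = \int_{A_n \cap \Theta_n} \{ P e^{-\omega_n(\ell_\theta - \ell_{\theta^\star})} \}^n \, \Pi(d\theta)$, and here the restriction to $\Theta_n$ is exactly what is needed to invoke Condition~\ref{cond:loss.b}: with $\delta = M \eps_n$, every $\theta \in A_n \cap \Theta_n$ satisfies both $\theta \in \Theta_n$ and $d(\theta;\theta^\star) > M \eps_n$, so for all sufficiently large $n$ (so that $M\eps_n$ is small enough and $\omega_n \in (0,\bar\omega)$) we get $P e^{-\omega_n(\ell_\theta - \ell_{\theta^\star})} < e^{-K_n^r \omega_n M^r \eps_n^r}$, hence $P^n N_n(A_n \cap \Theta_n) \leq e^{-n \omega_n K_n^r M^r \eps_n^r}$.

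For the denominator I would apply Lemma~\ref{lem:den} with the $n$-dependent neighborhood $G_n = \{\theta : m(\theta,\theta^\star) \vee v(\theta,\theta^\star) \leq (K_n \eps_n)^r\}$ and the learning-rate sequence $\omega_n$, taking $b_n = \tfrac12 \Pi(G_n) e^{-2 \omega_n n (K_n \eps_n)^r}$; the prior bound \eqref{eq:prior.rate.b} then yields $b_n \geq \tfrac12 e^{-(C+2) n \omega_n K_n^r \eps_n^r}$ together with $P^n(D_n \leq b_n) \leq (n \omega_n K_n^r \eps_n^r)^{-1}$, exactly as in the derivation of \eqref{eq:prior.bound.tmp} and its $\omega_n$-analogue used for Theorem~\ref{thm:rate.seq}, with $n\eps_n^r$ replaced throughout by $n\omega_n K_n^r \eps_n^r$. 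Combining the two bounds,
\[ P^n \Pi_n(A_n \cap \Theta_n) \leq 2\, e^{-(M^r - C - 2)\, n \omega_n K_n^r \eps_n^r} + (n \omega_n K_n^r \eps_n^r)^{-1}, \]
and since $n \omega_n K_n^r \eps_n^r \to \infty$ by hypothesis, both terms vanish as soon as $M$ is taken large enough that $M^r > C + 2$. This gives $\limsup_n P^n\Pi_n(A_n) = \limsup_n P^n\Pi_n(A_n \cap \Theta_n^\comp)$, and then \eqref{eq:post.b} forces the latter to $0$, which is the stated concentration rate in the sense of Definition~\ref{def:rate} for all large enough constants $M$.

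The only delicate point is the denominator step: one must check that Lemma~\ref{lem:den} applies cleanly with the $n$-dependent radius $(K_n\eps_n)^r$ and the scaled exponent $\omega_n$, and that the single hypothesis $n\omega_n K_n^r \eps_n^r \to \infty$ is exactly what makes both the probability bound on $\{D_n \leq b_n\}$ and the numerator-over-$b_n$ ratio vanish simultaneously. Everything else is bookkeeping; the genuinely new structural feature relative to Theorem~\ref{thm:rate} is simply that Condition~\ref{cond:loss.b} is only a statement about $\theta \in \Theta_n$, which forces the split of $A_n$ and explains why the residual $P^n \Pi_n(A_n \cap \Theta_n^\comp)$ cannot be dispensed with absent the extra control \eqref{eq:post.b}.
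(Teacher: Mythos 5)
Your proof is correct and follows essentially the same route as the paper's: the same split $A_n = (A_n\cap\Theta_n)\cup(A_n\cap\Theta_n^\comp)$, the same numerator bound via Fubini and Condition~\ref{cond:loss.b}, and the same application of Lemma~\ref{lem:den} with the $n$-dependent neighborhood $(K_n\eps_n)^r$ and the scaled threshold $b_n$. If anything, your bookkeeping is a bit more careful than the paper's (you consistently carry the $K_n^r$ exponent and the $\omega_n$ factor that the paper's displayed bounds appear to drop in a couple of places), but the argument is identical in substance.
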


There are two aspects of Theorem~\ref{thm:rate.b} that deserve further explanation.  We start with the point about separate handling of $\Theta_n^\comp$.  Condition \eqref{eq:post.b} is easy to check for well-specified Bayesian posteriors, but these results do not carry over even to a misspecified Bayesian model.  Of course, \eqref{eq:post.b} can always be arranged by restricting the support of the prior distribution to $\Theta_n$, which is the suggestion in \citet[][Theorem~2.3]{kleijn} for the Bayesian case and how we handle \eqref{eq:post.b} for our infinite-dimensional Gibbs example in Section~\ref{SS:quantile.reg}.  However, as \eqref{eq:post.b} suggests, this is not entirely necessary.  Indeed, \citet{kleijn} describe a trade-off between model complexity and prior support, and offer a more complicated form of their sufficient condition, which they do not explore in that paper.  The fact is, without a well-specified likelihood, checking a condition like our \eqref{eq:post.b} or Equation (2.13) in \citet{kleijn} is a challenge, at least in infinite-dimensional problems. For finite-dimensional problems, it may be possible to verify \eqref{eq:post.b} directly using properties of the loss functions.  For example, we use convexity of the check loss function for quantile estimation to verify \eqref{eq:post.b} in Section~\ref{SS:quantile_reg}.

Next, how might one proceed to check Condition~\ref{cond:loss.b}?  Go back to the Lipschitz loss case at the start of this subsection.  For a sieve $\Theta_n$ as described above, if $\theta$ and $\theta^\star$ are in $\Theta_n$, then $\|\theta-\theta^\star\|$ is bounded by a multiple of $\Delta_n$.  This, together with the Lipschitz property and \eqref{eq:wasserman}, implies
\[ P e^{-\omega_n(\ell_\theta - \ell_{\theta^\star})} \leq \exp\bigl\{ C_n \omega_n^2 v(\theta,\theta^\star) - \omega_n m(\theta,\theta^\star) \bigr\}, \]
where $C_n = O(1+\Delta_n\omega_n)$.  Suppose the user chooses $\omega_n$ and $\Delta_n$ such that $\omega_n \Delta_n = O(1)$; then we can replace $C_n$ by a constant $C$.  If there exist functions $g$ and $G$ such that
\begin{equation} 
m(\theta,\theta^\star) \geq g(\Delta_n) \|\theta-\theta^\star\|^2 \quad \text{and} \quad v(\theta,\theta^\star)\leq G(\Delta_n) \|\theta-\theta^\star\|^2, \quad \theta \in \Theta_n, 
\end{equation}
then the above upper bound simplifies to 
\[ \exp[ -\omega_n \|\theta-\theta^\star\|^2 \{g(\Delta_n) - C \omega_n G(\Delta_n)\}], \quad \theta \in \Theta_n. \]
Then Condition~\ref{cond:loss.b} holds with $K_n = g(\Delta_n) - C \omega_n G(\Delta_n)$, and it remains to balance the choices of $\omega_n$ and $\Delta_n$ to achieve the best possible concentration rate $\eps_n$; see Section~\ref{SS:quantile.reg}.  

\subsection{Clipping the loss function}
\label{ss:surrogate}

When the excess loss is heavy-tailed, i.e., not sub-exponential, like in Section~\ref{SS:heavy}, its moment-generating function does not exist and, therefore, Condition~\ref{cond:loss} cannot be satisfied.  In this section, we assume the loss function $\ell_\theta(u)$ is non-negative or lower-bounded by a negative constant.  In the latter case we can work with the shifted loss---$\ell_\theta$ minus its lower bound. Many practically useful loss functions are non-negative, including squared-error loss, which we cover in Section~\ref{SS:heavy}.

In such cases, it may be reasonable to replace the heavy-tailed loss with a clipped version
\[ \ell_\theta^n(u) = \ell_\theta(u) \wedge t_n, \]
where $t_n > 0$ is a diverging clipping sequence.  Since $\ell_\theta^n(u)$ is bounded in $(u,\theta)$ for each fixed $n$, the strategy for checking Condition~\ref{cond:loss} described in Section~\ref{sss:check1} suggests that, for certain choices of $(t_n, \eps_n, \omega_n)$, $\Pi_n$ places vanishing mass on the sequence of sets $\{\theta: P\ell^n_\theta - P\ell_{\theta_n^\star}^n > \eps_n\}$, where $\theta_n^\star = \arg\min P\ell_\theta^n$.  This makes the $\theta_n^\star$ the (moving) target of the Gibbs posterior, instead of the fixed $\theta^\star$. On the other hand, if the loss function admits more than one finite moment, then for a corresponding, increasing clipping sequence $t_n$ the clipped risk neighborhoods of $\theta^\star_n$ contain the risk neighborhoods of $\theta^\star$ for all large $n$, that is, 
\begin{equation}
\label{eq:nbhds}
\{\theta:R(\theta) - R(\theta^\star) > C\eps_n\}\subset \{\theta: P\ell^n_\theta - P\ell_{\theta_n^\star}^n > \eps_n\}, \quad \text{for some $C > 0$}.
\end{equation}
Then we further expect concentration of the clipped loss-based Gibbs posterior at $\theta^\star$ with respect to the excess risk divergence at rate $\eps_n$.  Condition~\ref{cond:loss.c} and Theorem~\ref{thm:rate.surrogate} below provide a set of sufficient conditions under which these expectations are realized and a concentration rate can be established. 

\begin{condition}
\label{cond:loss.c}
Let $\ell_\theta$ be the loss.  Define $\ell_\theta^n = \ell_\theta \wedge t_n$ as the clipped loss and $\Theta_n = \{\theta: \|\theta\| \leq \Delta_n\}$ as a sieve, depending on constants $t_n, \Delta_n \to \infty$.  
\begin{enumerate}
\item For some $s > 1$, the sequence $B_n = \sup_{\theta \in \Theta_n} P|\ell_\theta|^s$ is finite for all $n$.
\item There exists a sequence $\bar \omega_n > 0$, and a sequence $K_n>0$, such that for all sequences $0<\omega_n\leq \bar \omega_n$ and for all sufficiently small $\delta > 0$, 
\begin{equation}
\label{eq:cond.loss.surrogate}
\theta \in \Theta_n \text{ and } P\ell_\theta^n - P\ell_{\theta_n^\star}^n > \delta \implies P e^{-\omega_n
(\ell_\theta^n-\ell^n_{\theta_n^\star})} < e^{-K_n \omega_n \delta} .
\end{equation}
\end{enumerate}
\end{condition}

\begin{theorem}
\label{thm:rate.surrogate}
For a given loss $\ell_\theta$ and sieve $\Theta_n$, suppose that Condition~\ref{cond:loss.c} holds for $(\omega_n, \Delta_n, t_n)$; and let $B_n=B_n(s)$ be as defined in Condition~\ref{cond:loss.c}.1, for $s > 1$. Let $\eps_n$ be a vanishing sequence such that $n\omega_nK_n\eps_n\rightarrow\infty$, and suppose the prior satisfies 
 \begin{equation}
    \label{eq:prior.surrogate}
    \log\Pi(\{\theta: m_n(\theta,\theta_n^\star) \vee v_n(\theta,\theta_n^\star) \leq K_n\eps_n\}) \gtrsim -Cn\omega_nK_n\eps_n.
\end{equation}
Then the Gibbs posterior in \eqref{eq:gibbs} based on the clipped loss $\ell_\theta^n$ satisfies
\[ \limsup_{n \to \infty} P^n \Pi_n(A_n) \leq \limsup_{n \to \infty} P^n \Pi_n(A_n \cap \Theta_n^\comp) \]
where $A_n:=\{\theta:R(\theta)-R(\theta^\star)>\eps_n \vee B_nt_n^{1-s}\}$.  Consequently, if
\[P^n\Pi_n(\Theta_n^\comp) \to 0 \quad \text{as $n \to \infty$},\]
then the Gibbs posterior has asymptotic concentration rate $\eps_n \vee B_nt_n^{1-s}$ at $\theta^\star$ with respect to the excess risk $d(\theta,\theta^\star)=R(\theta)-R(\theta^\star)$. 
\end{theorem}

The setup here is more complicated than in previous sections, so some further explanation is warranted.  First, we sketch out how Condition~\ref{cond:loss.c}.1 leads to the critical property \eqref{eq:nbhds}.  A well-known bound on the expectation of a non-negative random variable, plus the moment bound in Condition~\ref{cond:loss.c}.1, for $s > 1$, and Markov's inequality leads to  
\begin{align}
\label{eq:moments}
P\ell_\theta1(\ell_\theta>t_n) =  \int_{t_n}^\infty P(\ell_\theta > x) \, dx 
&\leq  B_n \int_{t_n}^\infty x^{-s} \, dx = B_n t_n^{1-s}. 
\end{align}
This, in turn, implies $R(\theta) = P\ell_\theta^n + O(B_n t_n^{1-s})$.  If $B_n t_n^{1-s}\rightarrow 0$, then the difference between the risk and the clipped risk is vanishing, so we can bound $P\ell_\theta^n - P\ell_{\theta_n^\star}^n$ by a multiple of the excess risk $R(\theta) - R(\theta^\star)$ for all sufficiently large $n$.  In the application we explore in Section~\ref{SS:heavy}, $B_n$ is related to the radius of the sieve $\Theta_n$, which grows logarithmically in $n$, while $t_n$ is related to the polynomial tail behavior of $\ell_\theta$, so that $B_n t_n^{1-s}\rightarrow 0$ happens naturally if $s>1$. Additional details are given in the proof of Theorem~\ref{thm:rate.surrogate}, which can be found in Appendix~\ref{S:proofs}.

Second, how might Condition~\ref{cond:loss.c}.2 be checked? Start by defining the excess clipped risk $m_n(\theta; \theta_n^\star) = P\ell_\theta^n - P\ell_{\theta_n^\star}^n$ and the corresponding variance $v_n(\theta, \theta_n^\star) = P(\ell_\theta^n - \ell_{\theta_n^\star}^n)^2 - m_n(\theta,\theta_n^\star)^2$.  Now suppose it can be shown that there exists a function $G$ such that 
\[ v_n(\theta, \theta_n^\star) \leq G(\Delta_n) \, m_n(\theta,\theta_n^\star), \quad \text{for all $\theta\in\Theta_n$}, \]
$\Delta_n$ is the size index of the sieve.  This amounts to the excess clipped loss satisfying a Bernstein condition \eqref{eq:bernstein}, with exponent $\alpha=1$.  The excess clipped loss itself is $\lesssim t_n$ (and maybe substantially smaller, depending on form of $\ell_\theta$). So, we can apply the moment-generating function bound in \eqref{eq:wasserman} for bounded excess losses to get 
\[ Pe^{-\omega_n(\ell_\theta^n - \ell_{\theta^\star}^n)} < e^{-K_n\omega_n m_n(\theta, \theta_n^\star)}, \]
where $K_n = \{1 - C \omega_n G(\Delta_n)\}$, for a constant $C > 0$, provided that $\omega_n t_n = O(1)$.  Now it is easy to see that the above display implies Condition~\ref{cond:loss.c}.2. 

For the rate calculation with respect to the excess risk, the decomposition of $R(\theta)$ based on \eqref{eq:moments} implies we need $\eps_n \geq B_nt_n^{1-s}$, subject to the constraint $n\omega_nK_n\eps_n \rightarrow \infty$, for $K_n$ as above.  As we see in Section~\ref{SS:heavy}, $B_n$, $\Delta_n$, and $K_n$ can often be taken as powers of $\log n$, so the critical components determining the rate are $s$ and $t_n$.  The optimal rate depends on the upper bound of the the excess clipped loss, which, in the worst case, equals $t_n$. To apply \eqref{eq:wasserman} we need the learning rate to vanish like the reciprocal of this bound, so we take $\omega_n = t_n^{-1}$.  Then, we determine $\eps_n$ to satisfy $n t_n^{-1}\eps_n\rightarrow \infty$ and $\eps_n\geq  t_n^{1-s}$, up to a log term.  The clipping sequence $t_n \approx n^{1/s}$ is sufficient, and yields the rate $\eps_n \approx n^{1/s - 1}$, modulo log terms. 

\section{Examples}
\label{S:examples}

This section presents several illustrations of the general theory presented in Sections~\ref{S:main} and \ref{s:extensions}.  The strategies laid out in Sections~\ref{ss:check1}, \ref{SSS:local.exp.loss}, and \ref{ss:surrogate} are put to use in the following examples to verify our sufficient conditions for Gibbs posterior concentration.  All proofs of results in this section can be found in Appendix~\ref{proofs:examples}.

\subsection{Quantile regression}
\label{SS:quantile_reg}
Consider making inferences on the $\tau^{\text{th}}$ conditional quantile of a response $Y$ given a predictor $X=x$.  We model this quantile, denoted $Q_{Y|X=x}(\tau)$, as a linear combination of functions of $x$, that is, $Q_{Y|X=x}(\tau) = \theta^\top f(x)$, for a fixed, finite dictionary of functions $f(x)=(f_1(x), \dots, f_J(x))^\top$ and where $\theta=(\theta_1, \dots, \theta_J)^\top$ is a coefficient vector with $\theta\in\Theta$.  Here we assume the model is well-specified so the true conditional quantile is ${\theta^\star}^\top f(x)$ for some $\theta^\star\in\Theta$.  The standard \emph{check loss} for quantile estimation is
\begin{equation}\ell_\theta(u) = (y - \theta^\top f(x))(\tau - 1\{y<\theta^\top f(x)\}).\end{equation}
We show $\theta^\star$ minimizes $R(\theta)$ in the proof of Proposition~\ref{prop:quantreg} below.   It can be shown that $\theta \mapsto \ell_\theta(u)$ is $L$-Lipschitz, with $L<1$, and convex, so the strategy in Section~4.1 of the main article is helpful here for verifying Condition~2 and Lemma~\ref{prop:convex_cons} in Section~\ref{proof:prop:convex_cons} can be used to verify \eqref{eq:post.b}.  

Inference on quantiles is a challenging problem from a Bayesian perspective because the quantile is well-defined irrespective of any particular likelihood.  \citet{sriram.2013} interprets the check loss as the negative log-density of an asymmetric Laplace distribution and constructs a corresponding pseudo-posterior using this likelihood, but their posterior is effectively a Gibbs posterior as Definition~3 of the main article.

With a few mild assumptions about the underlying distribution $P$, our general result in Theorem~2 can be used to establish Gibbs posterior concentration at rate $n^{-1/2}$.  

\begin{asmp}
\label{asmp:quantile_reg}
\mbox{}
\begin{enumerate}
\item The marginal distribution of $X$ is such that $P ff^\top $ exists and is positive definite; 
\item the conditional distribution of $Y$, given $X=x$, has at least one finite moment and admits a continuous density $p_x$ such that $p_x(\theta^{\star \top} f)$ is bounded away from zero for $P$-almost all $x$; and
\item the prior $\Pi$ has a density bounded away from 0 in a neighborhood of $\theta^\star$.
\end{enumerate}
\end{asmp}
\begin{proposition}
\label{prop:quantreg}
Under Assumption~\ref{prop:quantreg}, if the learning rate is sufficiently small, then the Gibbs posterior concentrates at $\theta^\star$ with rate $\eps_n = n^{-1/2}$ with respect to $d(\theta,\theta^\star) = \|\theta-\theta^\star\|$.
\end{proposition}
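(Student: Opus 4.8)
The plan is to apply Theorem~\ref{thm:rate.rootn} with $r=2$, dimension $q=J$, target rate $\eps_n=n^{-1/2}$ (so that $n\eps_n^r$ is bounded), and divergence $d(\theta,\theta^\star)=\|\theta-\theta^\star\|$, which trivially satisfies the two-sided bound required there. It then suffices to verify Condition~\ref{cond:loss} for the check loss with $r=2$ and some sufficiently small constant learning rate, together with the prior small-ball bound \eqref{eq:prior.rootn}, namely $\Pi(\{\theta: m(\theta,\theta^\star)\vee v(\theta,\theta^\star)\le\eps_n^2\})\gtrsim\eps_n^J$.

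First I would fix the local geometry of the excess loss. Writing $\ell_\theta(u)=\rho_\tau(y-\theta^\top f(x))$ with $\rho_\tau(z)=z(\tau-1\{z<0\})$, Knight's identity together with the fact that $\theta^{\star\top}f(x)$ is the conditional $\tau$-quantile gives the usual nonnegative integral representation of $m(\theta,\theta^\star)=R(\theta)-R(\theta^\star)$ (this also re-proves that $\theta^\star$ minimizes $R$; see Appendix~\ref{proof:quantreg}). Expanding the integrand and using Assumption~\ref{asmp:quantile_reg}(2)---$p_x$ continuous with $p_x(\theta^{\star\top}f)$ bounded away from $0$---yields, on a fixed ball $\|\theta-\theta^\star\|\le\delta_0$,
\[ m(\theta,\theta^\star)=\tfrac12(\theta-\theta^\star)^\top H(\theta-\theta^\star)+o(\|\theta-\theta^\star\|^2),\qquad H=P\bigl[p_x(\theta^{\star\top}f)\,ff^\top\bigr], \]
with $H$ positive definite by Assumptions~\ref{asmp:quantile_reg}(1)--(2), so $m(\theta,\theta^\star)\asymp\|\theta-\theta^\star\|^2$ near $\theta^\star$. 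Since the check function $\rho_\tau$ is Lipschitz with constant $L=\max(\tau,1-\tau)<1$, the excess loss satisfies $|\ell_\theta(u)-\ell_{\theta^\star}(u)|\le L\|\theta-\theta^\star\|\,\|f(x)\|$, hence $v(\theta,\theta^\star)\le P(\ell_\theta-\ell_{\theta^\star})^2\le L^2(P\|f\|^2)\|\theta-\theta^\star\|^2$. Together these give the Bernstein condition \eqref{eq:bernstein} with $\alpha=1$ on $\|\theta-\theta^\star\|\le\delta_0$, so the bounded-excess-loss argument of Section~\ref{ss:check1} via \eqref{eq:wasserman} (applied conditionally on $X$, since given $X=x$ the excess loss is bounded, then integrated using $P\|f\|^2<\infty$) delivers Condition~\ref{cond:loss} with $r=2$ on that neighbourhood for any small enough constant $\omega$.

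For $\theta$ with $\|\theta-\theta^\star\|>\delta_0$, the bound in Condition~\ref{cond:loss} reads $Pe^{-\omega(\ell_\theta-\ell_{\theta^\star})}<e^{-K\omega\delta^r}$ for $\delta$ arbitrarily small, i.e.\ with right side arbitrarily close to $1$, so it is enough that $\sup_{\|\theta-\theta^\star\|>\delta_0}Pe^{-\omega(\ell_\theta-\ell_{\theta^\star})}$ stay bounded away from $1$. Here convexity of $\theta\mapsto\ell_\theta(u)$---hence of $R$, with unique minimizer $\theta^\star$---enters: it forces $\inf_{\|\theta-\theta^\star\|>\delta_0}m(\theta,\theta^\star)>0$ and, since along any ray the excess loss grows linearly away from $\theta^\star$, drives $Pe^{-\omega(\ell_\theta-\ell_{\theta^\star})}$ down uniformly. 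Lemma~\ref{prop:convex_cons} and the sieve/localization device of Section~\ref{SSS:local.exp.loss} are exactly the tools for making this uniform and, in the borderline unbounded case, for instead converting it into the escape bound $P^n\Pi_n(\Theta_n^\comp)\to0$ over a slowly growing sieve $\Theta_n=\{\|\theta\|\le\Delta_n\}$ as in \eqref{eq:post.b}. Combining the two regimes yields Condition~\ref{cond:loss} globally with $r=2$ and a small constant learning rate.

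Finally I would verify \eqref{eq:prior.rootn}: from $m(\theta,\theta^\star)\asymp\|\theta-\theta^\star\|^2$ and $v(\theta,\theta^\star)\lesssim\|\theta-\theta^\star\|^2$ near $\theta^\star$, the set $\{\theta: m(\theta,\theta^\star)\vee v(\theta,\theta^\star)\le\eps_n^2\}$ contains a Euclidean ball of radius of order $\eps_n=n^{-1/2}$ about $\theta^\star$, and Assumption~\ref{asmp:quantile_reg}(3) makes its prior mass $\gtrsim\eps_n^J=\eps_n^q$. Theorem~\ref{thm:rate.rootn} then gives the $n^{-1/2}$ rate in $\|\theta-\theta^\star\|$. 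I expect the main obstacle to be the third paragraph: the root-$n$ rate pins $r=2$, which needs the quadratic lower bound on $m$ that holds only near $\theta^\star$, so the argument must be patched with a separate, convexity-driven control of the Gibbs posterior mass far from $\theta^\star$ (and of the possibly unbounded excess loss)---which is exactly what Lemma~\ref{prop:convex_cons} and the machinery of Section~\ref{SSS:local.exp.loss} are designed to supply.
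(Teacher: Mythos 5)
Your proposal follows essentially the same route as the paper: local quadratic control of $m$ via a Taylor expansion of $R$ and a quadratic bound on $v$ via the Lipschitz property of the check loss (giving the Bernstein condition with $\alpha=1$ near $\theta^\star$), Condition~\ref{cond:loss.b} over a local neighborhood/sieve as in Section~\ref{SSS:local.exp.loss}, the prior small-ball bound \eqref{eq:prior.rootn} from the positive prior density, and finally Lemma~\ref{prop:convex_cons} to dispose of the far-from-$\theta^\star$ region via \eqref{eq:post.b}. One small caution in your third paragraph: growth of $m(\theta,\theta^\star)$ away from $\theta^\star$ does not by itself drive $P e^{-\omega(\ell_\theta-\ell_{\theta^\star})}$ below $1$ (the variance term grows quadratically while $m$ grows only linearly, so the exponent in \eqref{eq:wasserman} can flip sign), which is exactly why the argument must be routed through Lemma~\ref{prop:convex_cons} and \eqref{eq:post.b} rather than a direct uniform bound—as you ultimately do, and as the paper does.
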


\subsection{Area under receiver operator characteristic curve}
\label{S:auc}

The receiver operator characteristic (ROC) curve and corresponding area under the curve (AUC) are diagnostic tools often used to judge the effectiveness of a binary classifier.  Suppose a binary classifier produces a score $U$ characterizing the likelihood an individual belongs to Group $1$ versus Group $0$.  We can estimate an individual's group by $1(U>t)$ where different values of the cutoff score $t$ may provide more or less accurate estimates.  Suppose $U_0$ and $U_1$ are independent scores corresponding to individuals from Group $0$ and Group $1$, respectively.  The specificity and sensitivity of the test of $H_0:\text{individual }i\text{ belongs to Group 0}$ that rejects when $U>t$ are defined by $\text{spec}(t) = P(U_0<t)$ and $\text{sens}(t) = P(U_1>t)$.  When the type 1 and 2 errors of the test are equally costly the optimal cutoff is the value of $t$ maximizing $1-\text{spec}(t)+\text{sens}(t)$, or, in other words, the test maximizing the sum of power and one minus the type 1 error probability.  The ROC is the parametric curve $(1-\text{spec}(t), \text{sens}(t))$ in $[0,1]^2$ which provides a graphical summary of the tradeoff between Type~1 and Type~2 errors for different choices of the cutoff.  The AUC, equal to $P(U_1>U_0)$, gives an overall numerical summary of the quality of the binary classifier, independent of the choice of threshold.  

Our goal is to make posterior inferences on the AUC, but the usual Bayesian approach immediately runs into the kinds of problems we see in the examples in Sections~\ref{SS:quantile_reg} and later in \ref{S:mcid}.  The parameter of interest is one-dimensional, but it depends on a completely unknown joint distribution $P$.  Within a Bayesian framework, the options are to fix a parametric model for this joint distribution and risk model misspecification or work with a complicated nonparametric model.  \citet{wang.martin.auc} constructed a Gibbs posterior for the AUC that avoids both of these issues.

Suppose $U_{0,1}, \ldots, U_{0,m}$ and $U_{1,1}, \ldots, U_{1,n}$ denote random samples of size $m$ and $n$, respectively, of binary classifier scores for individuals belonging to Groups 0 and 1, and denote $\theta = P(U_1 > U_0)$.  \citet{wang.martin.auc} consider the loss function
\begin{equation}\ell_\theta(u_0, u_1) = \{\theta - 1(u_1>u_0)\}^2, \quad \theta\in [0,1],\end{equation}
for which the risk satisfies $R(\theta) = (\theta-\theta^\star)^2$.  If we interpret $m=m_n$ as a function of $n$, then it makes sense to write the empirical risk function as 
\begin{equation}R_n(\theta) = \frac{1}{mn}\sum_{i=1}^m\sum_{j=1}^n \{\theta - 1(U_{1,i} > U_{0,j})\}^2. \end{equation}
Note the minimizer of the empirical risk is equal to
\begin{equation}\hat\theta_n = \frac{1}{mn}\sum_{i=1}^m\sum_{j=1}^n 1(U_{1,i} > U_{0,j}).\end{equation}

\citet{wang.martin.auc} prove concentration of the Gibbs posterior at rate $n^{-1/2}$ under the following assumption.

\begin{asmp}
\label{asmp:auc}
\mbox{}
\begin{enumerate}
\item The sample sizes $(m,n)$ satisfy $m(m+n)^{-1}\rightarrow \lambda \in (0,1)$.
\item The prior distribution has a density function $\pi$ that is bounded away from zero in a neighborhood of $\theta^\star$. 
\end{enumerate}
\end{asmp}

\citet{wang.martin.auc} note that their concentration result holds for fixed learning rates and deterministic learning rates that vanish more slowly that $\min(m,n)^{-1}$.  As discussed in \citet{syring.martin.scaling} one motivation for choosing a particular learning rate is to calibrate Gibbs posterior credible intervals to attain a nominal coverage probability, at least approximately.  With this goal in mind, \citet{wang.martin.auc} suggest the following random learning rate.  Define the covariances 
\begin{align}
\tau_{10} & = \text{Cov}\{1(U_{1,1}>U_{0,1}), \, 1(U_{1,1}>U_{0,2})\}\nonumber \\ 
\tau_{01} & = \text{Cov}\{1(U_{1,1}>U_{0,1}), \, 1(U_{1,2}>U_{0,1})\}. 
\end{align}
\citet{wang.martin.auc} note the asymptotic covariance of $\hat\theta_n$ is given by
\begin{equation}\frac{1}{m+n}\Bigl(\frac{\tau_{10}}{\lambda} + \frac{\tau_{01}}{1-\lambda}\Bigr),\end{equation}
and that the Gibbs posterior variance can be made to match this, at least asymptotically, by using the random learning rate
\begin{equation}\hat \omega_n = \frac{m+n}{2mn}\Bigl(\frac{\hat\tau_{10}}{\lambda}+\frac{\hat\tau_{01}}{1-\lambda}\Bigr)^{-1},\end{equation}
where $\hat\tau_{10}$ and $\hat\tau_{01}$ are the corresponding empirical covariances:
\begin{align}
\hat\tau_{10} 
&= \frac{2}{mn(n-1)}\sum_{i=1}^m\sum_{j\ne j'} 1(U_{1,i}>U_{0,j})1(U_{1,i}>U_{0,j'}) - \hat\theta_n^2, \nonumber\\
\hat\tau_{01} 
&= \frac{2}{nm(m-1)}\sum_{j=1}^n\sum_{i\ne i'} 1(U_{1,i}>U_{0,j})1(U_{1,i'}>U_{0,j}) - \hat\theta_n^2.
\end{align}
The hope is that the Gibbs posterior with the learning rate $\hat\omega_n$ has asymptotically calibrated credible intervals.  It turns out that the concentration result in \citet{wang.martin.auc} along with Theorem~4 imply the Gibbs posterior with a slightly adjusted version of learning rate $\hat\omega_n$ also concentrates at rate $n^{-1/2}$.  The adjustment to the learning rate has the effect of slightly widening Gibbs posterior credible intervals, so their asymptotic calibration is not adversely affected.    

\begin{proposition}
\label{prop:auc}
Suppose Assumption~\ref{asmp:auc} holds and let $a_n$ denote any diverging sequence.  Then, the Gibbs posterior with learning rate $a_n\hat\omega_n$ concentrates at rate $n^{-1/2}$ with respect to $d(\theta,\theta^\star)=|\theta-\theta^\star|$.
\end{proposition}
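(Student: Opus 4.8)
The plan is to derive Proposition~\ref{prop:auc} by combining the deterministic-learning-rate concentration result of \citet{wang.martin.auc} with the random-learning-rate ``upgrade'' in Theorem~\ref{thm:rate.est}. Under Assumption~\ref{asmp:auc}, \citet{wang.martin.auc} establish that the Gibbs posterior $\Pi_n^{\omega_n}$ for $\theta = P(U_1>U_0)$ concentrates at rate $n^{-1/2}$ with respect to $d(\theta,\theta^\star)=|\theta-\theta^\star|$ for every deterministic learning rate $\omega_n$ that is either a sufficiently small constant or vanishes more slowly than $\min(m,n)^{-1}$; such sequences are exactly the admissible ``anchor'' sequences in Theorem~\ref{thm:rate.est}. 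That theorem says that if a deterministic anchor $\omega_n$ is available and a (random) sequence $\tilde\omega_n$ satisfies $P^n(\omega_n/2 < \tilde\omega_n < \omega_n)\to 1$, then $\Pi_n^{\tilde\omega_n}$ inherits the $n^{-1/2}$ rate. Taking $\tilde\omega_n = a_n\hat\omega_n$, the whole argument therefore reduces to producing a deterministic anchor $\omega_n$ and verifying this probabilistic sandwich.

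The sandwich is where the explicit form of $\hat\omega_n$ enters. Writing $n\hat\omega_n = \tfrac{m+n}{2m}\bigl(\tfrac{\hat\tau_{10}}{\lambda}+\tfrac{\hat\tau_{01}}{1-\lambda}\bigr)^{-1}$, Assumption~\ref{asmp:auc}(1) gives $\tfrac{m+n}{2m}\to\tfrac{1}{2\lambda}$, and $\hat\tau_{10},\hat\tau_{01}$ are consistent for $\tau_{10},\tau_{01}$, so $n\hat\omega_n \to c_0 := \tfrac{1}{2\lambda}\bigl(\tfrac{\tau_{10}}{\lambda}+\tfrac{\tau_{01}}{1-\lambda}\bigr)^{-1}$ in $P^n$-probability, with $c_0\in(0,\infty)$ since the bracketed quantity is the (positive) limit of $(m+n)\,\mathrm{Var}(\hat\theta_n)$. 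I would then take the deterministic anchor $\omega_n := \tfrac32 c_0\, a_n/n$, for which
\[ \{\omega_n/2 < a_n\hat\omega_n < \omega_n\} = \{\tfrac34 c_0 < n\hat\omega_n < \tfrac32 c_0\} \supseteq \{\,|n\hat\omega_n - c_0| < \tfrac18 c_0\,\}, \]
and the right-hand event has $P^n$-probability tending to $1$. Since $a_n$ diverges, $n\omega_n = \tfrac32 c_0 a_n\to\infty$, so $\omega_n$ vanishes more slowly than $\min(m,n)^{-1}\asymp n^{-1}$ (when $a_n=o(n)$) and hence is a legitimate anchor for \citet{wang.martin.auc}'s result; Theorem~\ref{thm:rate.est} then delivers the $n^{-1/2}$ rate for $\Pi_n^{a_n\hat\omega_n}$.

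The main obstacle is the consistency step $n\hat\omega_n\to c_0$, that is, the weak law $\hat\tau_{10}\to\tau_{10}$, $\hat\tau_{01}\to\tau_{01}$, and $\hat\theta_n\to\theta^\star$ in $P^n$-probability. The summands are bounded indicators, so this is routine, but the variance computation must be organized with some care because of the double sums $\sum_{j\ne j'}$ and $\sum_{i\ne i'}$ and the subtracted $\hat\theta_n^2$ term, which induce $U$-statistic-type within-sample dependence; counting index tuples that share a coordinate gives the needed $o(1)$ variance bound. A minor secondary point is the regime where $a_n$ diverges as fast as, or faster than, $n$, so that $\omega_n=\tfrac32 c_0 a_n/n$ need not stay below the constant $\bar\omega$ of Condition~\ref{cond:loss}; this falls outside the regime relevant to calibrating credible intervals (where $a_n$ is taken to diverge slowly), and in any case is handled separately, e.g.\ by a direct argument bounding $\Pi_n^{a_n\hat\omega_n}(A_n)$ using the uniform separation $R_n(\theta)-R_n(\theta^\star)\gtrsim \eps_n^2$ on $A_n$ that holds here for the (bounded, quadratic) AUC risk.
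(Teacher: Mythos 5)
Your proposal is correct and essentially reproduces the paper's proof: anchor on the deterministic-learning-rate concentration result of \citet{wang.martin.auc} and upgrade to the random rate via Theorem~\ref{thm:rate.est}, with the weak laws $\hat\tau_{10}\to\tau_{10}$ and $\hat\tau_{01}\to\tau_{01}$ supplying the required probabilistic sandwich. The one small variation is cosmetic: you inflate the deterministic anchor to $\omega_n=\tfrac32 c_0\,a_n/n$ so the sandwich holds for $a_n\hat\omega_n$ directly, whereas the paper anchors at $a_n\omega_n$ with $\omega_n$ built from the \emph{true} covariances and must introduce a shrink factor $\alpha\in(1/2,1)$ that it then reabsorbs into the arbitrary diverging sequence $a_n$; the two devices are interchangeable.
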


\subsection{Finite-dimensional regression with squared-error loss}
\label{SS:heavy}

Consider predicting a response $y \in\RR$ using a linear function $x^\top \theta$ by minimizing the sum of squared-error losses $\ell_\theta (u) = (y - x^\top \theta)^2$, with $u=(x,y) \in \RR^{J+1}$, over a parameter space $\theta \in \Theta \subseteq \RR^{J}$. Suppose the covariate-response variable pairs $(X_i,Y_i)$ are iid with $X$ taking values in a compact subset $\mathcal{X}\subset \RR^J$.   To complement this example we present a more flexible, non-parametric regression problem in Section~\ref{SS:pred_sq} below.  For the current example, we focus on how the tail behavior of the response variable affects posterior concentration; see Assumptions~\ref{asmp:light} and \ref{asmp:heavy} below.  

\subsubsection{Light-tailed response}

When the response is sub-exponential so is the excess loss, and by the argument outlined in Section~\ref{ss:check1} we can verify Condition~\ref{cond:loss} for $d(\theta,\theta^\star)=\|\theta - \theta^\star\|_2$ and with $r=2$.  Then, the Gibbs posterior distribution concentrates at rate $n^{-1/2}$ as a consequence of Theorem~\ref{thm:rate.rootn}.

\begin{asmp}
\label{asmp:light} \mbox{}
\begin{enumerate}
    \item The response $Y$, given $x$, is sub-exponential with parameters $(\sigma^2, \, b)$ for all $x$; 
    \item $X$ is bounded and its marginal distribution is such that $PXX^\top$ exists and is positive definite with eigenvalues bounded away from $0$; and
    \item the prior $\Pi$ has a density bounded away from $0$ in a neighborhood of $\theta^\star$.
\end{enumerate}
\end{asmp}

\begin{proposition}
\label{prop:light}
If Assumption~\ref{asmp:light} holds, and the learning rate $\omega$ is a sufficiently small constant, then the Gibbs posterior concentrates at rate $\eps_n = n^{-1/2}$ with respect to $d(\theta,\theta^\star)=\|\theta - \theta^\star\|_2$.
\end{proposition}

\subsubsection{Heavy-tailed response}
\label{SSS:heavy}

As discussed in Section~\ref{ss:check1}, Condition~\ref{cond:loss} can be expected to hold when the response is light-tailed, but not when it is heavy-tailed.  However, for a capped loss, $\ell_\theta^n = \ell_\theta \wedge t_n$, with increasing $t_n$, and for a suitable sieve on the parameter space, we can show concentration of the Gibbs posterior at the risk minimizer $\theta^\star$ via the argument given in Section~\ref{ss:surrogate}.

\begin{asmp}
\label{asmp:heavy}
The marginal distribution of $Y$ satisfies $P|Y|^{s}<\infty$ for $s > 2$.
\end{asmp}
 
Define a sieve by $\Theta_n=\{\theta\in \mathbb{R}^J: \|\theta\|_2 < \Delta_n\}$ for an increasing sequence $\Delta_n$, e.g., $\Delta_n = \log n$.  The moment condition in Assumption~\ref{asmp:heavy} implies three important properties of $\ell_\theta^n$ for $\theta\in \Theta_n$:
\begin{itemize}
\item The excess clipped loss satisfies $\sup_{y,x}|\ell_\theta^n(y,x) - \ell_{\theta_n^\star}(y,x)| < \Delta_n t_n^{1/2}$. 
\item The risk and clipped risk are equivalent up to an error of order $\Delta_n^s t_n^{1-s/2}$.  
\item The clipped loss satisfies $m_n(\theta,\theta^\star_n) \vee v_n(\theta,\theta^\star_n) \lesssim \Delta_n^{2} (\|\theta-\theta^\star\|_2^2 + \Delta_n^s t_n^{1-s/2})$.
\end{itemize} 
These three properties can be used to verify Condition~\ref{cond:loss.c} using the argument sketched out in Section~\ref{ss:surrogate} and to verify the prior bound in \eqref{eq:prior.surrogate} required to apply Theorem~\ref{thm:rate.surrogate}.

\begin{proposition}
\label{prop:heavy}
Suppose Assumption~\ref{asmp:heavy} holds for some $s>2$.  Let $t_n = n^{2/(s-1)}$, $\Delta_n = \log n$, $\omega_n = \Delta_n^{-1}t_n^{-1/2}$, and $\eps_n = \Delta_n^{s/2} n^{-(s-2)/(2s-2)}$.  If Assumption~\ref{asmp:light}.2--3 also holds, then the Gibbs posterior with learning rate $\omega_n$ concentrates at rate $\eps_n$ with respect to $d(\theta,\theta^\star)=\|\theta - \theta^\star\|_2$. 
\end{proposition}

Proposition~\ref{prop:heavy} continues to hold if we replace $s$ in the definitions of $t_n$ and $\eps_n$ by $s'$ satisfying $2<s'<s$.  That means we only need an accurate lower bound for $s$ to construct a consistent Gibbs posterior for $\theta^\star$, albeit with a slower concentration rate. 

It is clear the clipping sequence may bias the clipped risk minimizer.  For a given clip, the bias is less for light-tailed (large $s$) losses compared to heavy-tailed losses because a loss in excess of the clip is rarer for the former.  This explains why the clipping sequence $t_n = n^{2/(s - 1)}$ {\em decreases} in $s$.  

Note that our $\eps_n^2$ can be compared to the rate derived in \citet[][Example~11]{grunwald.2018}.  Indeed, up to log factors, their rate, $n^{-s/(s+2)}$, is smaller than ours for $s < 4$ but larger for $s > 4$.  That is, their rate is slightly better when the response has between 2 and 4 moments, while ours is better when the response has 4 or more moments. Also, their result assumes that the parameter space is fixed and bounded, whereas we avoid this assumption with a suitably chosen sieve.

\subsection{Mean regression curve}
\label{SS:pred_sq}

Let $Y_1,\ldots,Y_n$ be independent, where the marginal distribution of $Y_i$ depends on a fixed covariate $x_i \in [0,1]$ through the mean, i.e., the expected value of $Y_i$ is $\theta^\star(x_i)$, $i=1,\ldots,n$.  For simplicity, set $x_i=i/n$, corresponding to an equally-spaced design.  Then the goal is estimation of the mean function $\theta^\star: [0,1] \to \RR$, which resides in a specified function class $\Theta$ defined below. 

A natural starting point is to define an empirical risk based on squared error loss, i.e.,
\begin{equation} R_n(\theta) = \frac1n \sum_{i=1}^n \{Y_i - \theta(x_i)\}^2. \end{equation}
However, any function $\theta$ that passes through the observations would be an empirical risk minimizer, so some additional structure is needed to make the solution to the empirical risk minimization problem meaningful.  Towards this, as is customary in the literature, we parametrize the mean function as a linear combination of a fixed set of basis functions, $f(x) = (f_1(x), \ldots, f_{J}(x))^\top$.  That is, we consider only functions $\theta=\theta_{\beta}$, where 
\begin{equation} \theta_{\beta}(x) = \beta^\top f(x) , \quad \beta \in \RR^J. \end{equation}
Note that we do not assume that $\theta^\star$ is of the specified form; more specifically, we do not assume existence of a vector $\beta^\star$ such that $\theta^\star = \theta_{\beta^\star}$.  The idea is that the structure imposed via the basis functions will force certain smoothness, etc., so that minimization of the risk over this restricted class of functions would identify a suitable estimate.  

This structure changes the focus of our investigation from the mean function $\theta$ to the $J$-vector of coefficients $\beta$.  We now proceed by first constructing a Gibbs posterior for $\beta$ and then obtain the corresponding Gibbs posterior for $\theta$ by pushing the former through the mapping $\beta \mapsto \theta_{\beta}$.  In particular, define the empirical risk function in terms of $\beta$:
\begin{equation}
\label{eq:r_n}
r_n(\beta) = R_n(\theta_{\beta}) = \frac1n \sum_{i=1}^n \{Y_i - \theta_{\beta}(x_i)\}^2 = \tfrac1n(Y - F_n\beta)^\top (Y - F_n\beta), \end{equation}
where $\beta \in \mathbb{R}^J$ and where $F_n$ is the $n \times J$ matrix whose $(i,j)$ entry is $f_j(x_i)$, assumed to be positive definite; see below.  Given a prior distribution $\Pitilde$ for $\beta$---which determines a prior $\Pi$ for $\theta$ through the aforementioned mapping---we can first construct the Gibbs posterior for $\beta$ as in \eqref{eq:gibbs} with the pseudo-likelihood $\beta \mapsto \exp\{-\omega n r_n(\beta) \}$.  If we write $\Pitilde_n$ for this Gibbs posterior for $\beta$, then the corresponding Gibbs posterior for $\theta$ is given by 
\begin{equation} \Pi_n(A) = \Pitilde_n(\{\beta: \theta_\beta \in A\}), \quad A \subseteq \Theta. \end{equation}
Therefore, the concentration properties of $\Pi_n$ are determined by those of $\Pitilde_n$.  

We can now proceed very much like we did before, but the details are slightly more complicated in the present inid case.  Taking expectation with respect to the joint distribution of $(Y_1,\ldots,Y_n)$ is, as usual, the average of marginal expectations; however, since the data are not iid, these marginal expectations are not all the same.  Therefore, the expected empirical risk function is
\begin{align}
\bar r_n(\beta) = P^n r_n(\beta) &= \frac1n \sum_{i=1}^n P_i \{Y_i - \theta_\beta(x_i)\}^2
\end{align}
where $P_i = P_{x_i}$ is the marginal distribution of $Y_i$ and where $F_{n,i}$ is the $i^{\text{th}}$ row of $F_n$.  Since the expected empirical risk function depends on $n$, through $(x_1,\ldots,x_n)$, so too does the risk minimizer
\begin{equation}\beta^\dagger_n = \arg\min_\beta \bar r_n(\beta).\end{equation}
If $P_i$ has finite variance, then $\bar r_n(\beta)$ differs from $\{\theta^\star(x_{1:n}) - F_n\beta\}^2$ by only an additive constant not depending on $\beta$, and this becomes a least-squares problem, with solution
\begin{equation} \beta_n^\dagger =(F_n^\top F_n)^{-1} F_n^\top \,\theta^\star(x_{1:n}), \end{equation}
where $\theta^\star(x_{1:n})$ is the $n$-vector $(\theta^\star(x_1),\ldots,\theta^\star(x_n))^\top$.  Our expectation is that the Gibbs posterior $\Pitilde_n$ for $\beta$ will suitably concentrate around $\beta_n^\dagger$, which implies that the Gibbs posterior $\Pi_n$ for $\theta$ will suitably concentrate around $\theta_{\beta_n^\dagger}$.  Finally, if the above holds and the basis representation is suitably flexible, then $\theta_{\beta_n^\dagger}$ will be close to $\theta^\star$ in some sense and, hence, we achieve the desired concentration.  

The flexibility of the basis representation depends on the dimension $J$.  Since $\theta^\star$ need not be of the form $\theta_\beta$, a good approximation will require that $J=J_n$ be increasing with $n$.  How fast $J=J_n$ must increase depends on the smoothness of $\theta^\star$.  Indeed, if $\theta^\star$ has smoothness index $\alpha > 0$ (made precise below), then many systems of basis functions---including Fourier series and B-splines---have the following approximation property: there exists an $H>0$ such that for every $J$
\begin{equation}
    \label{eq:approx.smooth}
    \text{there exists $\beta \in \RR^J$ such that $\|\beta\|_\infty < H$ and $\|\theta_\beta - \theta^\star\|_\infty \lesssim J^{-\alpha}$}.
\end{equation}
Then the idea is to set the approximation error in \eqref{eq:approx.smooth} equal to the target rate of convergence, which depends on $n$ and on $\alpha$, and then solve for $J=J_n$.  

For Gibbs posterior concentration at or near the optimal rate, we need the prior distribution for $\beta$ to be sufficiently concentrated in a bounded region of the $J$-dimensional space in the sense that 
\begin{equation}
\label{eq:prior.sqr}
\Pitilde(\{\beta:\|\beta-\beta'\|_2 \leq \eps\}) \gtrsim (C\eps)^J, \quad \text{for all $\beta' \in \RR^J$ with $\|\beta'\|_\infty \leq H$},  
\end{equation}
for the same $H$ as in \eqref{eq:approx.smooth}, for a small constant $C>0$, and for all small $\eps > 0$.  

\begin{asmp}
\label{asmp:pred_sq}
\mbox{}
\begin{enumerate}
\item The function $\theta^\star: [0,1] \to \RR$ belongs to a class $\Theta=\Theta(\alpha,L)$ of H\"older smooth functions parametrized by $\alpha \geq 1/2$ and $L > 0$.  That is, $\theta^\star$ satisfies  
\[ |\theta^{\star ([\alpha])}(x) - \theta^{\star ([\alpha])}(x')| \leq L |x - x'|^{\alpha - [\alpha]}, \quad \text{for all $x,x' \in [0,1]$}, \]
where the superscript ``$(k)$'' means $k^\text{th}$ derivative and $[\alpha]$ is the integer part of $\alpha$;  
\item for a given $x$, the response $Y$ is sub-Gaussian and with variance and variance proxy---both of which can depend on $x$---uniformly upper bounded by $\sigma^2$; 
\item the eigenvalues of $F_n^\top F_n$ are bounded away from zero and $\infty$;
\item the approximation property  \eqref{eq:approx.smooth} holds; and
\item the prior for $\beta$ satisfies \eqref{eq:prior.sqr} and has a bounded density on the $J_n$-dimensional parameter space.
\end{enumerate}
\end{asmp}

The bounded variance assumption is implied, for example, if the variance of $Y$ is a smooth function of $x$ in $[0,1]$, which is rather mild.  And assuming the eigenvalues of $F_n^\top F_n$ are bounded is not especially strong since, in many cases, the basis functions would be orthonormal.  In that case, the diagonal and off-diagonal entries of $F_n^\top F_n$ would be approximately 1 and 0, respectively, and the bounds are almost trivial.  The conditions on the prior distribution are weak; as we argue in the proof, it can be satisfied by taking the joint prior density to be the product of $J$ independent prior densities on the components of $\beta$, and where each component density is strictly positive.  

\begin{proposition}
\label{prop:pred_sq}
If Assumption~\ref{prop:pred_sq} holds, and the learning rate $\omega$ is a sufficiently small constant, then the Gibbs posterior $\Pi_n$ for $\theta$ concentrates at $\theta^\star$ with rate $\eps_n = n^{-\alpha/(1+2\alpha)}$ with respect to the empirical $L_2$ norm $\|\theta-\theta^\star\|_{n,2}$, where $\|f\|_{n,2}^2 =  n^{-1}\sum_{i=1}^n f^2(x_i)$.
\end{proposition}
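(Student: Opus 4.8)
The plan is to reduce the statement to a Gibbs posterior concentration result for the coefficient vector $\beta$ around the pseudo-true value $\beta_n^\dagger = (F_n^\top F_n)^{-1} F_n^\top\, \theta^\star(x_{1:n})$, and then pass back to $\theta^\star$ by the triangle inequality. Since $\Pi_n$ is the pushforward of $\Pitilde_n$ under $\beta\mapsto\theta_\beta$, it suffices to show $P^n$-mean convergence to zero of $\Pitilde_n(\{\beta:\|\theta_\beta-\theta_{\beta_n^\dagger}\|_{n,2}>M\eps_n\})$; combined with the deterministic approximation bound $\|\theta_{\beta_n^\dagger}-\theta^\star\|_{n,2}\lesssim\eps_n$ (established below), the inclusion $\{\|\theta-\theta^\star\|_{n,2}>(M+c)\eps_n\}\subseteq\{\|\theta-\theta_{\beta_n^\dagger}\|_{n,2}>M\eps_n\}$ then yields the claim. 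For the concentration of $\Pitilde_n$ I would apply the independent-but-not-identically-distributed (inid) analogue of Theorem~\ref{thm:rate} with target $\beta_n^\dagger$, divergence $d(\beta,\beta_n^\dagger)=\|\theta_\beta-\theta_{\beta_n^\dagger}\|_{n,2}$, and exponent $r=2$; the only structural change from the iid proof is that $P^n e^{-\omega n\{R_n(\beta)-R_n(\beta_n^\dagger)\}}=\prod_{i=1}^n P_i e^{-\omega\{\ell_\beta(Y_i)-\ell_{\beta_n^\dagger}(Y_i)\}}$ (a product, not a power) and that $m$ and $v$ become averages of per-observation means and variances, neither of which affects the proofs of Theorem~\ref{thm:rate} or of the denominator lemma it uses.

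The first substantive step is to verify the inid version of Condition~\ref{cond:loss}. Write $W_i=Y_i-\theta^\star(x_i)$ (mean zero, sub-Gaussian with proxy $\le\sigma^2$ by Assumption~\ref{asmp:pred_sq}(2)), $\Delta_i=\theta_{\beta_n^\dagger}(x_i)-\theta_\beta(x_i)$, and $b_i=\theta^\star(x_i)-\theta_{\beta_n^\dagger}(x_i)$; then the per-observation excess loss expands as $\ell_\beta(Y_i)-\ell_{\beta_n^\dagger}(Y_i)=2\Delta_i W_i+2\Delta_i b_i+\Delta_i^2$. The normal equations $F_n^\top\{\theta^\star(x_{1:n})-F_n\beta_n^\dagger\}=0$ force $\sum_i\Delta_i b_i=0$, so $m(\beta,\beta_n^\dagger)=\tfrac1n\sum_i\Delta_i^2=\|\theta_\beta-\theta_{\beta_n^\dagger}\|_{n,2}^2=d(\beta,\beta_n^\dagger)^2$, and only the term $2\Delta_i W_i$ is random, giving $v(\beta,\beta_n^\dagger)\le 4\sigma^2 m(\beta,\beta_n^\dagger)$ — a Bernstein condition with exponent $1$. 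The sub-Gaussian bound $P_i e^{-2\omega\Delta_i W_i}\le e^{2\omega^2\sigma^2\Delta_i^2}$ holds for every $\omega$, so multiplying the per-observation bounds and again using $\sum_i\Delta_i b_i=0$ yields, for all $\beta$, $P^n e^{-\omega n\{R_n(\beta)-R_n(\beta_n^\dagger)\}}\le\exp\{-\omega(1-2\omega\sigma^2)\,n\,d(\beta,\beta_n^\dagger)^2\}$; this is Condition~\ref{cond:loss} with $r=2$, $K=1-2\omega\sigma^2$ (bounded below by $\tfrac12$ once $\omega<\bar\omega:=(4\sigma^2)^{-1}$). Because the bound is global in $\beta$, no sieve argument (Theorem~\ref{thm:rate.b}) is needed even though the coefficient space $\RR^{J_n}$ is unbounded.

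The second step is the prior-mass condition \eqref{eq:prior.rate}. By Assumption~\ref{asmp:pred_sq}(3) the empirical norm $\|\theta_\beta-\theta_{\beta_n^\dagger}\|_{n,2}$ is comparable to the Euclidean norm $\|\beta-\beta_n^\dagger\|_2$ with constants independent of $n$ and $J$, so the neighborhood $\{m\vee v\le\eps_n^2\}$ contains a ball $\{\|\beta-\beta_n^\dagger\|_2\le c\eps_n\}$. Comparing $\beta_n^\dagger$ (the empirical-$L_2$ projection of $\theta^\star$ onto $\mathrm{span}\{f_1,\dots,f_{J_n}\}$) with the near-interpolant $\tilde\beta$ from \eqref{eq:approx.smooth}, which has $\|\tilde\beta\|_\infty<H$ and $\|\theta_{\tilde\beta}-\theta^\star\|_\infty\lesssim J_n^{-\alpha}$, shows both that $\|\beta_n^\dagger\|_\infty$ is bounded uniformly in $n$ and that $\|\theta_{\beta_n^\dagger}-\theta^\star\|_{n,2}\le\|\theta_{\tilde\beta}-\theta^\star\|_\infty\lesssim J_n^{-\alpha}$. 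Choosing $J_n\asymp n^{1/(1+2\alpha)}$ makes this approximation bias of order $\eps_n=n^{-\alpha/(1+2\alpha)}$, and \eqref{eq:prior.sqr} (applied at the center $\beta_n^\dagger$) then gives $\log\Pitilde(\{m\vee v\le\eps_n^2\})\gtrsim J_n\log(c'\eps_n)\asymp -n\eps_n^2$ up to logarithmic factors in $n$, which is the prior-mass input required by Theorem~\ref{thm:rate}. Applying that theorem in its inid form delivers concentration of $\Pitilde_n$, hence of $\Pi_n$, around $\theta_{\beta_n^\dagger}$ at rate $\eps_n$, and the triangle-inequality reduction above finishes the proof.

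I expect the difficulty to be two bookkeeping matters rather than a single deep obstacle: (i) carefully restating and re-running the iid machinery of Section~\ref{S:main} — Theorem~\ref{thm:rate} and its denominator lemma — in the inid, growing-dimension ($J=J_n\to\infty$) setting, checking that the $n$-dependence of the divergence, the parameter space, and $\beta_n^\dagger$ causes no trouble; and (ii) reconciling the prior-mass bound $\log\Pitilde(G_n)\gtrsim -J_n\log(1/\eps_n)$ with the bias--variance balance that fixes $J_n\asymp n^{1/(1+2\alpha)}$, which is the classical tension in series-prior nonparametrics and is what pins the rate (and, strictly, forces a logarithmic correction unless \eqref{eq:prior.sqr} is read with $J$-dependent constants). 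The identity $\sum_i\Delta_i b_i=0$ coming from the least-squares projection is the one genuinely problem-specific ingredient, and it is what makes the excess loss behave like a clean $r=2$ problem.
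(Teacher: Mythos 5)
Your reduction to concentration of $\Pitilde_n$ around $\beta_n^\dagger$ in the empirical norm, the expansion of the excess loss via the normal equations (which kills the $\sum_i \Delta_i b_i$ cross term and yields the clean $m = d^2$, $v \le 4\sigma^2 m$ identities), the sub-Gaussian MGF bound giving Condition~\ref{cond:loss} with $r=2$, the comparison to the near-interpolant from \eqref{eq:approx.smooth} to control both $\|\beta_n^\dagger\|_\infty$ and the bias $\|\theta_{\beta_n^\dagger}-\theta^\star\|_{n,2}$, and the final triangle-inequality reduction are all precisely what the paper does. So the skeleton of your argument is right.

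There is, however, a genuine gap at the final step, and you half-noticed it yourself. You verify the prior-mass input in the form $\log\Pitilde(G_n)\gtrsim J_n\log(c'\eps_n)\asymp -J_n\log n$, and you observe this is $\asymp -n\eps_n^2$ only ``up to logarithmic factors,'' so that invoking Theorem~\ref{thm:rate} directly yields at best $\eps_n\asymp (\log n)^{1/2}\,n^{-\alpha/(1+2\alpha)}$. That extra $(\log n)^{1/2}$ is not a harmless footnote: the proposition you are proving asserts the rate $n^{-\alpha/(1+2\alpha)}$ \emph{without} a logarithmic factor, so a proof that leaves the log in place is incomplete. Rebalancing $J_n$ does not help either: any choice of $J_n$ in the prior-mass argument of Theorem~\ref{thm:rate} trades the bias $J_n^{-\alpha}$ against the prior penalty $(J_n\log n/n)^{1/2}$ and always leaves a $\log n$ power in the rate.

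The paper closes this gap by \emph{not} applying Theorem~\ref{thm:rate} as a black box. Instead it re-runs the argument of Theorem~\ref{thm:rate.rootn} adapted to this inid, $J_n$-dimensional setting: slice $\{\|\theta_\beta-\theta_{\beta^\dagger}\|_{n,2}>M\eps_n\}$ into annuli $\{tM\eps_n<\|\theta_\beta-\theta_{\beta^\dagger}\|_{n,2}<(t+1)M\eps_n\}$, apply Condition~\ref{cond:loss} on each annulus, and use the bounded-density hypothesis on $\Pitilde$ to get the per-annulus prior volume $\lesssim\{(t+1)M\eps_n\}^{J_n}$. Summing over $t$ gives $P^nN_n(\tilde A_n)\lesssim(M\eps_n)^{J_n}e^{-\omega M^2J_n/4}$, while the denominator lemma with $n\eps_n^2=J_n$ gives $D_n\gtrsim\eps_n^{J_n}e^{-cJ_n}$ in probability. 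The $\eps_n^{J_n}$ factors cancel \emph{exactly}, leaving $P^n\Pi_n(\tilde A_n)\lesssim e^{-J_n(\omega M^2/4-\log M-c)}\to 0$ for $M$ large, with no log penalty. That cancellation is the one idea your write-up is missing; once you add the peeling step, the rest of your argument goes through verbatim.
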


We should emphasize that the quantity of interest, $\theta$, is high-dimensional, and the rate $\eps_n$ given in Proposition~\ref{prop:pred_sq} is optimal for the given smoothness $\alpha$; there are not even any nuisance logarithmic terms.  

The simpler fixed-dimensional setting with constant $J$ can be analyzed similarly as above.  In that case, we can simultaneously weaken the requirement on the response $Y$ in Assumption~\ref{asmp:pred_sq}.2 from sub-Gaussian to sub-exponential, and strengthen the conclusion to a root-$n$ concentration rate.

\subsection{Binary classification}
\label{SS:classification}

Let $Y \in \{0,1\}$ be a binary response variable and $X = (X_0, X_1,\ldots,X_q)^\top$ a $(q+1)$-dimensional predictor variable.  We consider classification rules of the form 
\begin{equation} \phi_\theta(X) = 1\{X^\top \theta > 0\} = 1\{\alpha X_0 + (X_1,\ldots,X_q)^\top \beta > 0\}, \quad \theta = (\alpha, \beta) \in \RR^{q+1}, \end{equation}
and the goal is to learn the optimal $\theta$ vector, i.e., $\theta^\star = \arg \min_\theta R(\theta)$, where $R(\theta) = P\{Y \neq \phi_\theta(X)\}$ is the misclassification error probability, and $P$ is the joint distribution of $(X,Y)$.  This optimal $\theta^\star$ is such that $\eta(x) > \frac12$ if $x^\top \theta^\star > 0$ and $\eta(x) < \frac12$ if $x^\top \theta^\star \leq 0$, where $\eta(x) = P(Y=1 \mid X=x)$ is the conditional probability function.  Below we construct a Gibbs posterior distribution that concentrates around this optimal $\theta^\star$ at rate that depends on certain local features of that $\eta$ function.  

Suppose our data consists of iid copies $(X_i,Y_i)$, $i=1,\ldots,n$, of $(X,Y)$ from $P^\star$, and define the empirical risk function 
\begin{equation} R_n(\theta) = \frac{1}{n} \sum_{i=1}^n 1\{Y_i \neq \phi_\theta(X_i)\}. \end{equation}
In addition to the empirical risk function we need specify a prior $\Pi$ and here the prior plays a significant role in the Gibbs posterior concentration results.  

A unique feature of this problem, which makes the prior specification a little different than in a linear regression problem, is that the scale of $\theta$ does not affect classification performance, e.g., replacing $\theta$ with $1000\theta$ gives exactly the same classification performance.  To fix a scale, we follow \citet{jiang.tanner.2008}, and 
\begin{itemize}
\item assume that the $x_0$ component of $x$ is of known importance and always included in the classifier, 
\item and constrain the corresponding coefficient, $\alpha$, to take values $\pm 1$.
\end{itemize}
This implies that the $\alpha$ and $\beta$ components of the $\theta$ vector should be handled very differently in terms of prior specification.  In particular, $\alpha$ is a scalar with a discrete prior---which we take here to be uniform on $\pm 1$---and $\beta$, being potentially high-dimensional, will require setting-specific prior considerations.   

The characteristic that determines the difficulty of a classification problem is the distribution of $\eta(X)$ or, more specifically, how concentrated $\eta(X)$ is near the value $\frac12$, where one could do virtually no worse by classifying according to a coin flip.  The set $\{x: \eta(x) = \frac12\}$ is called the {\em margin}, and conditions that control the concentration of the distribution of $\eta(X)$ around $\frac12$ are generally called {\em margin conditions}.  Roughly, if $\eta$ has a ``jump'' or discontinuity at the margin, then classification is easier and $\eta(X)$ does not have to be so tightly concentrated around $\frac12$.  On the other hand, if $\eta$ is smooth at the margin, then the classification problem is more challenging in the sense that more data near the margin is needed to learn the optimal classifier, hence, tighter concentration of $\eta(X)$ near $\frac12$ is required. 

In Sections~\ref{SSS:massart} and \ref{SSS:tsybakov} that follow, we consider two such margin conditions, namely, the so-called Massart and Tsybakov conditions.  The first is relatively strong, corresponding to a jump in $\eta$ at the margin, and the result we we establish in Proposition~\ref{prop:massart} is accordingly strong.  In particular, we show that the Gibbs posterior achieves the optimal and adaptive concentration rate in a class of high-dimensional problems ($q \gg n$) under a certain sparsity assumption on $\theta^\star$.  The Tsybakov margin condition we consider below is weaker than the first, in the sense that $\eta$ can be smooth near the ``$\eta=\frac12$'' boundary and, as expected, the Gibbs posterior concentration rate result is not as strong as the first.  

\subsubsection{Massart's noise condition}
\label{SSS:massart}

Here we allow the dimension $q+1$ of the coefficient vector $\theta = (\alpha, \beta)$ to exceed the sample size $n$, i.e., we consider the so-called {\em high-dimensional} problem with $q \gg n$.  Accurate estimation and inference is not possible in high-dimensional settings without imposing some low-dimensional structure on the inferential target, $\theta^\star$.  Here, as is typical in the literature on high-dimensional inference, we assume that $\theta^\star$ is {\em sparse} in the sense that most of its entries are exactly zero, which corresponds to most of the predictor variables being irrelevant to classification.  Below we construct a Gibbs posterior distribution for $\theta$ that concentrates around the unknown sparse $\theta^\star$ at a (near) optimal rate.  

Since the sparsity in $\theta^\star$ is crucial to the success of any statistical method, the prior needs to be chosen carefully so that sparsity is encouraged in the posterior.  The prior $\Pi$ for $\theta$ will treat $\alpha$ and $\beta$ independent, and the prior for $\beta$ will be defined hierarchically.  Start with the reparametrization $\beta \to (S,\beta_S)$, where $S \subseteq \{1,2,\ldots,q\}$ denotes the configuration of zeros and non-zeros in the $\beta$ vector, and $\beta_S$ denotes the $|S|$-vector of non-zero values.  Following \citet{castillo.etal.2015}, for the marginal prior $\pi(S)$ for $S$, we take  
\[ \pi(S) = \textstyle \binom{q}{|S|}^{-1} \, f(|S|), \]
where the $f$ is a prior for the size $|S|$ and the first factor on the right-hand side is the uniform prior for $S$ of the given size $|S|$.  Various choices of $f$ are possible, but here we take the {\em complexity prior} $f(s) \propto (cq^a)^{-s}, \,s=0,1,\ldots,q$, a truncated geometric density, where $a$ and $c$ are fixed (and here arbitrary) hyperparameters; a similar choice is also made in \citet{martin.etal.2017}.  Second, for the conditional prior of $\beta_S$, given $S$, again following \citet{castillo.etal.2015}, we take its density to be 
\[ g_S(\beta_S) = \prod_{k \in S} \tfrac{\lambda}{2} e^{-\lambda |\beta_k|}, \]
a product of $|S|$ many Laplace densities with rate $\lambda$ to be specified.  

\begin{asmp}
\label{asmp:massart}
\mbox{}
\begin{enumerate}
\item The marginal distribution of $X$ is compactly supported, say, on $[-1,1]^{q+1}$.  
\item The conditional distribution of $X_0$, given $\tilde X =(X_1, \ldots, X_q)$, has a density with respect to Lebesgue measure that is uniformly bounded.  
\item The rate parameter $\lambda$ in the Laplace prior satisfies $\lambda \lesssim (\log q)^{1/2}$.
\item The optimal $\theta^\star = (\alpha^\star, \beta^\star)$ is sparse in the sense that $|S^\star| \log q = o(n)$, where $S^\star$ is the configuration of non-zero entries in $\beta^\star$, and $\|\beta^\star\|_\infty = O(1)$. 
\item There exists $h \in (0,\frac12)$ such that $P(|\eta(X) - \frac12| \leq h) = 0$. 
\end{enumerate}
\end{asmp}

The first two parts of Assumption~\ref{asmp:massart} correspond to Conditions $0'$ and $0''$ in \citet{jiang.tanner.2008}.    and Assumption~\ref{asmp:massart} (5) above is precisely the margin condition imposed in Equation~(5) of \citet{massart.nedelec.2006}; see, also, \citet{mammen.tsybakov.1999} and \citet{koltchinskii.2006}.  This concisely states that there is either a jump in the $\eta$ function at the margin or that the marginal distribution of $X$ is not supported near the margin; in either case, there is separation between the $Y=0$ and $Y=1$ cases, which makes the classification problem relatively easy. 
 
With these assumptions, we get the following Gibbs posterior asymptotic concentration rate result.  Note that, in order to preserve the high-dimensionality in the asymptotic limit, we let the dimension $q=q_n$ increase with the sample size $n$.  So, the data sequence actually forms a triangular array but, as is common in the literature, we suppress this formulation in our notation.  

\begin{proposition}
\label{prop:massart}
Consider a classification problem as described above, with $q \gg n$.  Under Assumption~\ref{asmp:massart}, the Gibbs posterior, with sufficiently small constant learning rate, concentrates at rate $\eps_n = (n^{-1} |S^\star| \log q)^{1/2}$ with respect to $d(\theta,\theta^\star) = \{R(\theta) - R(\theta^\star)\}^{1/2}$.  
\end{proposition}

This result shows that, even in very high dimensional settings, the Gibbs posterior concentrates on the optimal rule $\theta^\star$ at a fast rate.  For example, suppose that the dimension $q$ is polynomial in $n$, i.e., $q \sim n^b$ for any $b > 0$, while the ``effective dimension,'' or complexity, is sub-linear, i.e., $|S^\star| \sim n^a$ for $a < 1$.  Then we get that $\{\theta: R(\theta) - R(\theta^\star) \lesssim n^{-(1-a)} \log n\}$ has Gibbs posterior probability converging to 1 as $n \to \infty$.  That is, rates better than $n^{-1/2}$ can easily be achieved, and even arbitrarily close to $n^{-1}$ is possible.  Compare this to the rates in Propositions~2--3 in \citet{jiang.tanner.2008}, also in terms of risk difference, that cannot be faster than $n^{-1/2}$.  Further, the concentration rate in Proposition~\ref{prop:massart} is nearly the optimal rate corresponding to an oracle who has knowledge of $S^\star$.  That is, the Gibbs posterior concentrates at nearly the optimal rate {\em adaptively} with respect to the unknown complexity.

\subsubsection{Tsybakov's margin condition}
\label{SSS:tsybakov}

Next, we consider classification under the more general Tsybakov margin condition \citep[e.g.,][]{tsybakov}.  The problem set up is the same as above, except that here we consider the simpler low-dimensional case, with the number of predictors $(q+1)$ small relative to $n$.  Since the dimension is no longer large, prior specification is much simpler.  We will continue to assume, as before, that the $x_0$ component of $x$ has a constrained coefficient $\alpha \in \{\pm 1\}$, to which we assign a discrete uniform prior.  Otherwise, we simply require the prior  $\Pi$ have a (marginal) density, $\pi$, for $\beta$, with respect to Lebesgue measure on $\RR^q$.  

\begin{asmp}
\label{asmp:tsybakov}
\mbox{}
\begin{enumerate}
\item The marginal prior density for $\beta$ is continuous and bounded away from 0 near $\beta^\star$. 
\item There exists $c>0$ and $\gamma>0$ such that $P(|2\eta(X) - 1| \leq h) \leq ch^\gamma$ for all sufficiently small $h > 0$. 
\end{enumerate}
\end{asmp}

The concentration of the marginal distribution of $\eta(X)$ around $\frac12$ controls the difficulty of the classification problem, and Condition~\ref{asmp:tsybakov}.2 concerns exactly this.  Note that smaller $\gamma$ implies $\eta(X)$ is less concentrated around $\frac12$, so we expect our Gibbs posterior concentration rate, say, $\eps_n=\eps_n(\gamma)$, to be a decreasing function of $\gamma$.  The following result confirms this. 

\begin{proposition}
\label{prop:tsybakov}
Suppose Assumption~\ref{asmp:tsybakov} holds and, for the specified $\gamma > 0$, let 
\[ \eps_n = (\log n)^{\gamma/(2+2\gamma)}n^{-\gamma/(3+2\gamma)}. \]
Then the Gibbs posterior distribution, with learning rate $\omega_n=\eps_n^{1/\gamma}$, concentrates at rate $\eps_n$ with respect to $d(\theta,\theta^\star) = \{R(\theta) - R(\theta^\star)\}^{1/2}$.  
\end{proposition}

Note that Massart's condition from Section~\ref{SSS:massart} corresponds to Tsybakov's condition above with $\gamma = \infty$.  In that case, the Gibbs posterior concentration rate we recover from Proposition~\ref{prop:tsybakov} is $\eps_n = (\log n)^{1/2} n^{-1/2}$, which is achieved with a suitable constant learning rate.  This is within a logarithmic factor of the optimal rate for finite-dimensional problems. Moreover, for both the $\gamma < \infty$ and $\gamma=\infty$ cases, we expect that the logarithmic factor could be removed following an approach like that described in Theorem~\ref{thm:rate.rootn}, but we do not explore this possible extension here.

We should emphasize that this case is unusual because the learning rate $\omega_n$ depends on the (likely unknown) smoothness exponent $\gamma$.  This means the rate in Proposition~\ref{prop:tsybakov} is not adaptive to the margin.  However, this dependence is not surprising, as it also appears in \citet[][Section~6]{grunwald.2018}.  The reason the learning rate depends on $\gamma$ is that the Tsybakov margin condition in Assumption~\ref{asmp:tsybakov}.2 implies the Bernstein condition in \eqref{eq:bernstein} takes the form
\[v(\theta,\theta^\star)\lesssim m(\theta,\theta^\star)^{\gamma/(1+\gamma)}. \]
Therefore, in order to verify Condition~\ref{cond:loss} using the strategy in Section~\ref{ss:check1}, we need  $\omega_n m(\theta,\theta^\star)$ and $\omega_n^3 v(\theta,\theta^\star)$ to have the same order when $d(\theta,\theta^\star)\geq M_n\eps_n$.  This requires that the learning rate depends on $\gamma$, in particular, $\omega_n = \eps_n^{1/\gamma}$. 

\subsection{Quantile regression curve}
\label{SS:quantile.reg}

In this section we revisit inference on a conditional quantile, covered in Section~\ref{SS:quantile_reg}.  The $\tau^{\text{th}}$ conditional quantile of a response $Y$ given a covariate $X = x$ is modeled by a linear combination of basis functions $f(x) = (f_1(x), ..., f_J(x))^\top$:
\[ Q_{Y|X=x}(\tau) = \beta^\top f(x), \quad \beta \in \RR^J. \]

In Section~\ref{SS:quantile_reg}, we made the rather restrictive assumption that the true conditional quantile function $\theta^\star(x)$ belonged to the span of a fixed set of $J$ basis functions.  In practice, it may not be possible to identify such a set of functions, which is why we considered using a sample-size dependent sequence of sets of basis functions in Section~\ref{SS:pred_sq} to model a smooth function, $\theta^\star$.  When the degree of smoothness, $\alpha$, of $\theta^\star$ is known we can choose the number of basis functions to use in order to achieve the optimal concentration rate.  But, in practice, $\alpha$ may not be known, which creates a challenge because, as mentioned, the number of terms needed in the basis function expansion modeling $\theta^\star$ depends on this unknown degree of smoothness.  

To achieve optimal concentration rates adaptive to unknown smoothness, the choice of prior is crucial.  In particular, the prior must support a very large model space in order to guarantee it places sufficient mass near $\theta^\star$. Our approximation of $\theta^\star$ by a linear combination of basis functions suggests a hierarchical prior for $\theta\equiv(J,\beta_J)$, similar to Section~\ref{SSS:massart}, with a marginal prior $\pi$ for the number of basis functions $J$ and a conditional prior $\widetilde \Pi_J$ for $\beta_J$, given $J$.  The resulting prior for $\theta$ is given by a mixture,
\begin{equation}
    \label{eq:prior_bar}
\Pi(A) = \sum_{j=1}^\infty \pi(j) \, \Pitilde_j(\{\beta_j\in \RR^j: \beta_j^\top f \in A\}), \quad A\subseteq \Theta.
\end{equation}
Then, in order for $\Pi$ to place sufficient mass near $\theta^\star$, it is sufficient the marginal and conditional priors satisfy the following conditions: the marginal prior $\pi$ for $J$ satisfies for some $c_1>0$ for every $J=j$
\begin{equation}
\label{eq:prior.j}
    \pi(j)\geq e^{-c_1j\log j};
\end{equation}
and, the conditional prior $\widetilde \Pi$ for $\beta_J$ given $J=j$ satisfies  for every $j$
\begin{equation}
\label{eq:prior.beta}
\Pitilde(\{\beta:\|\beta-\beta'\|_2 \leq \eps\}) \gtrsim e^{-Cj\log(1/\eps)}, \quad \text{for all $\beta' \in \RR^j$ with $\|\beta'\|_\infty \leq H$},  
\end{equation}
for the same $H$ as in \eqref{eq:approx.smooth} and for some constant $C>0$ for all sufficiently small $\eps > 0$.  Fortunately, many simple choices of $(\pi, \widetilde\Pi_J)$ are satisfactory for obtaining adaptive concentration rates, e.g., a Poisson prior on $J$ and a $J$-dimensional normal conditional prior for $\beta$, given $J$; see Conditions (A1) and (A2) and Remark~1 in \citet{shen.ghosal.2015}.  Besides the conditions in \eqref{eq:prior.j} and \eqref{eq:prior.beta} we need to make a minor modification of $\Pi$ to make it suitable for our proof of Gibbs posterior concentration; see below.

Similar to our choice in Section~\ref{SS:quantile_reg} we link the data and parameter through the check loss function 
\[ \ell_\theta(u) = \tfrac12 (|\theta(x)-y|-|y|)+(1-\tau)\theta(x),\] 
where $\theta(x) = \beta^\top f(x)$.  See \citet{koltchinskii.1997} for a proof that $P\ell_\theta$ is minimized at $\theta^\star$.  It is straightforward to show the check loss $\theta\mapsto\ell_\theta(u)$ is $L$-Lipschitz with $L < 1$.  From there, if $Y$ were bounded we could use Condition~\ref{cond:loss} to compute the concentration rate.  However, to handle an unbounded response we need the flexibility of Condition~\ref{cond:loss.b} and Theorem~\ref{thm:rate.b}.  To verify \eqref{eq:post.b}, we found it necessary to limit the complexity of the parameter space by imposing a constraint on the prior distribution, namely that the sequence of prior distributions places all its mass on the set $\Theta_n:=\{\theta:\|\theta\|_\infty \leq \Delta_n\}$ for some diverging sequence $\Delta_n$; see Assumption~\ref{prop:quant_reg}.4.  This constraint implies the prior depends on $n$, and we refer to this sequence of prior distributions by $\Pi^{(n)}$.  Given the hierarchical prior $\Pi$ in \eqref{eq:prior_bar} one straightforward way to define a sequence of prior distributions satisfying the constraint is to restrict and renormalize $\Pi$ to $\Theta_n$, i.e., define $\Pi^{(n)}$ as
\begin{align}
    \label{eq:prior_Delta}
\Pi^{(n)}(A) = \Pi(A)/\Pi(\Theta_n), \quad A\subseteq \Theta\cap\Theta_n 
\end{align}
This particular construction of $\Pi^{(n)}$ in \eqref{eq:prior_Delta} is not the only way to define a sequence of priors satisfying the restriction to $\Theta_n$, but it is convenient.  That is, if $\Pi$ places mass $\eta$ on a sup-norm neighborhood of $\theta^\star$ (see the proof of Proposition~\ref{prop:quant_reg}), then, by construction, $\Pi^{(n)}$ in \eqref{eq:prior_Delta} places at least mass $\eta$ on the same neighborhood.  

We should emphasize this restriction of the prior to $\Theta_n$ is only a technical requirement needed for the proof, but it is not unreasonable.  Since the true function $\theta^\star$ is bounded, it is eventually in the growing support of the prior $\Pi$.  Similar assumptions have been used in the literature on quantile curve regression; for example, Theorem~6 in \citet{takeuchi.2006} requires that the parameter space consists only of bounded functions, which is a stricter assumption than ours here.

\begin{asmp}
\label{asmp:quant_reg}
\mbox{}
\begin{enumerate}
\item The function $\theta^\star:\mathbb{X}\mapsto\mathbb{R}$ is H\"older smooth with parameters $(\alpha,L)$ (see Assumption~\ref{prop:pred_sq}.1);
\item the basis functions satisfy the approximation property in \eqref{eq:approx.smooth};
\item the covariate space $\mathbb{X}$ is compact and there exists a $\delta>0$ such that the conditional density of $Y$, given $X=x$, is continuous and bounded away from $0$ by a constant $\beta>0$ in the interval $(\theta^\star(x)-\delta,\, \theta^\star(x)+\delta)$ for every $x$; and,
\item the sequence $\Pi^{(n)}$ of prior distributions satisfies \eqref{eq:prior_Delta} for a sequence of subsets of the parameter space $\Theta_n:=\{\theta:\|\theta\|_\infty \leq \Delta_n\}$ for some sequence $\Delta_n>0$, for $\Pi$ as defined in \eqref{eq:prior_bar}, and for marginal and conditional priors $(\pi, \widetilde \Pi)$ for $J$ and $\beta_J$ given $J=j$ that satisfy \eqref{eq:prior.j} and \eqref{eq:prior.beta}.
\end{enumerate}
\end{asmp}

\begin{proposition}
\label{prop:quant_reg}
Define $\eps_n = (\log n)^{1/2}\Delta_n^2 n^{-\alpha/(1+2\alpha)}$.  If the learning rate satisfies $\omega_n = c\Delta_n^{-2}$ for some $0<c<1/2$ and Assumption~\ref{asmp:quant_reg} holds, then the Gibbs posterior distribution concentrates at rate $\eps_n$ with respect to $d(\theta,\theta^\star) = \|\theta-\theta^\star\|_{L_2(P)}$.  
\end{proposition}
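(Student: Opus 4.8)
The plan is to invoke Theorem~\ref{thm:rate.b}. The check loss is $L$-Lipschitz in $\theta(x)$ with $L<1$, which is exactly the structure Section~\ref{SSS:local.exp.loss} and Theorem~\ref{thm:rate.b} are built for; and since $Y$ is unbounded the excess loss is not globally bounded, so the global Condition~\ref{cond:loss} is unavailable and we must restrict to the sieve $\Theta_n=\{\theta:\|\theta\|_\infty\le\Delta_n\}$ and work with the local Condition~\ref{cond:loss.b}. One of the three hypotheses of Theorem~\ref{thm:rate.b}, the escape bound \eqref{eq:post.b}, is automatic here: by Assumption~\ref{asmp:quant_reg}(4) the prior $\Pi^{(n)}$ is supported on $\Theta_n$, so $\Pi_n(\Theta_n^\comp)=0$ for every $n$. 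It therefore remains to verify (i) Condition~\ref{cond:loss.b} with $r=2$ and a suitable sequence $K_n$, and (ii) the prior-mass bound \eqref{eq:prior.rate.b} for the stated $\eps_n$.

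For (i) I would first record the two moment bounds on $\Theta_n$. The variance bound is immediate from the Lipschitz property: $v(\theta,\theta^\star)\le P(\ell_\theta-\ell_{\theta^\star})^2\le L^2\|\theta-\theta^\star\|_{L_2(P)}^2=L^2 d(\theta,\theta^\star)^2$. For the mean, write $R(\theta)-R(\theta^\star)=\int g_x(\theta(x)-\theta^\star(x))\,dP_X(x)$ with $g_x(s)=\int_0^s\{F_x(\theta^\star(x)+t)-\tau\}\,dt$ and $F_x$ the conditional cdf of $Y$ given $X=x$; Assumption~\ref{asmp:quant_reg}(3) gives $g_x(s)\ge\tfrac{\beta}{2}s^2$ for $|s|\le\delta$, and monotonicity of $F_x$ together with $|\theta(x)-\theta^\star(x)|\le 2\Delta_n$ on $\Theta_n$ upgrades this to $g_x(s)\gtrsim\Delta_n^{-1}s^2$ for all relevant $s$, hence $m(\theta,\theta^\star)\gtrsim\Delta_n^{-1}d(\theta,\theta^\star)^2$ on $\Theta_n$. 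Because the excess loss is bounded by $O(\Delta_n)$ on $\Theta_n$, the bounded-excess-loss inequality \eqref{eq:wasserman} applies with constant $C=O(\Delta_n)$, and since $\omega_n=c\Delta_n^{-2}$ makes $C\omega_n=O(\Delta_n^{-1})=o(1)$ the bracketed factor is $O(1)$; combining with the moment bounds gives $Pe^{-\omega_n(\ell_\theta-\ell_{\theta^\star})}\le\exp\{-\omega_n d(\theta,\theta^\star)^2(c_m\Delta_n^{-1}-C'\omega_n)\}$, and as $\omega_n=o(\Delta_n^{-1})$ the parenthesis exceeds $\tfrac12 c_m\Delta_n^{-1}$ for large $n$. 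This is Condition~\ref{cond:loss.b} with $r=2$ and $K_n\asymp\Delta_n^{-1/2}$; note $\omega_n\to0$ so $\omega_n\in(0,\bar\omega)$ eventually, and the requirement $n\omega_n K_n^2\eps_n^2\to\infty$ reduces to $(\log n)\,\Delta_n\, n^{1/(1+2\alpha)}\to\infty$, which holds.

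For (ii), with $r=2$, $\omega_n=c\Delta_n^{-2}$, $K_n\asymp\Delta_n^{-1/2}$, the target in \eqref{eq:prior.rate.b} is $\log\Pi^{(n)}(\{m\vee v\le(K_n\eps_n)^2\})\gtrsim -Cn\Delta_n^{-3}\eps_n^2$. Near $\theta^\star$ both $m$ and $v$ are $\lesssim d(\cdot,\theta^\star)^2$, so this neighborhood contains an $L_2(P)$-ball about $\theta^\star$ of radius $\asymp\Delta_n^{-1/2}\eps_n$; I would choose the number of basis functions $J_n$ so that the approximation error $J_n^{-\alpha}$ in \eqref{eq:approx.smooth} is a fixed fraction of this radius, i.e.\ $J_n\asymp(\Delta_n^{-1/2}\eps_n)^{-1/\alpha}$, with approximating coefficient $\beta'$ satisfying $\|\beta'\|_\infty<H$. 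The hierarchical prior conditions \eqref{eq:prior.j}--\eqref{eq:prior.beta}, together with the equivalence between $\|\theta_\beta-\theta_{\beta'}\|_{L_2(P)}$ and $\|\beta-\beta'\|_2$ afforded by a standard basis, give $\Pi(\text{ball})\ge\pi(J_n)\,\Pitilde_{J_n}(\{\|\beta-\beta'\|_2\lesssim\Delta_n^{-1/2}\eps_n\})\ge e^{-c_1J_n\log J_n}e^{-CJ_n\log(1/(\Delta_n^{-1/2}\eps_n))}\ge e^{-c''J_n\log n}$, and restriction/renormalization to $\Theta_n$ only increases this (the ball lies inside $\Theta_n$ for large $n$ since $\theta^\star$ is bounded). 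Matching $J_n\log n\lesssim n\Delta_n^{-3}\eps_n^2$ against $J_n\asymp(\Delta_n^{-1/2}\eps_n)^{-1/\alpha}$ and solving for $\eps_n$ forces the $n$-exponent $\alpha/(1+2\alpha)$, and one checks that the stated $\eps_n=(\log n)^{1/2}\Delta_n^2 n^{-\alpha/(1+2\alpha)}$ is a deliberately conservative admissible choice, its powers of $\log n$ and $\Delta_n$ dominating the minimal ones. With (i), (ii), and the automatic \eqref{eq:post.b}, Theorem~\ref{thm:rate.b} yields concentration at rate $\eps_n$ with respect to $d(\theta,\theta^\star)=\|\theta-\theta^\star\|_{L_2(P)}$.

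The main obstacle is the bookkeeping in (ii): the factor $\Delta_n$ coming from the degraded quadratic lower bound on $m$ propagates into $K_n$ and then simultaneously into the radius of the prior neighborhood and the right-hand exponent of \eqref{eq:prior.rate.b}, while the $\log n$ enters through $\log(1/\eps_n)$ in \eqref{eq:prior.beta} and through $J_n\log J_n$ in \eqref{eq:prior.j}; $\eps_n$ must then be chosen so that all three constraints — $n\omega_n K_n^2\eps_n^2\to\infty$, approximation error no larger than the neighborhood radius, and prior mass at least $e^{-Cn\omega_n K_n^2\eps_n^2}$ — hold at once. Two ancillary points also need care: the $m$-lower bound for $\theta\in\Theta_n$ far from $\theta^\star$ (a routine monotonicity argument on $g_x$ once set up), and the uniform-in-$J_n$ comparison of $\|\theta_\beta-\theta_{\beta'}\|_{L_2(P)}$ with $\|\beta-\beta'\|_2$, which is where the specific basis system (B-splines, Fourier) enters.
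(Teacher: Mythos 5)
Your proof is correct and reaches the Proposition by the paper's general strategy — invoke Theorem~\ref{thm:rate.b}, note that \eqref{eq:post.b} is trivial because the restricted prior $\Pi^{(n)}$ lives on $\Theta_n$, bound the excess‐loss MGF via the bounded‐Lipschitz inequality \eqref{eq:wasserman}, and handle the prior mass with the Shen–Ghosal sieve argument — but you establish the crucial lower bound on the risk difference by a different and in fact sharper route than the paper's. The paper partitions $\XX$ into $\XX_1=\{x:|\theta(x)-\theta^\star(x)|\geq\delta\}$ and its complement, lower‐bounds the contribution of $\XX_1$ merely by $(\beta\delta^2/4)P(\XX_1)$, and then pays a blunt factor $\Delta_n^2$ when relating $\int_{\XX_1}|\theta-\theta^\star|^2\,dP$ back to $P(\XX_1)$, arriving at $m(\theta,\theta^\star)\gtrsim\Delta_n^{-2}\|\theta-\theta^\star\|_{L_2(P)}^2$ and hence $K_n\propto\Delta_n^{-1}$. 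You instead keep the pointwise structure $m(\theta,\theta^\star)=\int g_x(\theta(x)-\theta^\star(x))\,dP_X$ with $g_x(s)=\int_0^s\{F_x(\theta^\star(x)+t)-\tau\}\,dt$, and show by monotonicity of $F_x$ that $g_x(s)\gtrsim\Delta_n^{-1}s^2$ uniformly over $|s|\lesssim\Delta_n$ (quadratic near $0$, linear beyond $\delta$, and $s\leq 2\Delta_n$ forces the linear tail above a $\Delta_n^{-1}s^2$ parabola). Integrating gives $m\gtrsim\Delta_n^{-1}\,d(\theta,\theta^\star)^2$, one factor of $\Delta_n$ better, so that your $K_n\asymp\Delta_n^{-1/2}$ rather than $\Delta_n^{-1}$. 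Your bookkeeping with this $K_n$ — the requirement $n\omega_n K_n^2\eps_n^2\to\infty$, the prior‐mass exponent $n\Delta_n^{-3}\eps_n^2$, and the choice of $J_n$ — is internally consistent and verifies the Proposition's stated $\eps_n$, which (as you correctly observe) carries a deliberately conservative $\Delta_n^2$ factor so that either $K_n$ works; your version actually shows that factor could be reduced. The variance bound, the approximation step via \eqref{eq:approx.smooth}, the appeal to \eqref{eq:prior.j}–\eqref{eq:prior.beta}, and the remark that restriction/renormalization to $\Theta_n$ only increases the prior mass of the relevant sup‐norm ball, all match the paper. One small caution: the quadratic \emph{upper} bound $m\lesssim d^2$ that you use to translate the $(m\vee v)$‐neighborhood into an $L_2(P)$‐ball holds only once the approximating $\theta_{\beta'}$ is within sup‐norm $\delta$ of $\theta^\star$, which is guaranteed by the choice of $J_n$ once $n$ is large; it is worth stating this explicitly, as the paper does implicitly via the sup‐norm ball $B_n^\star$.
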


Since the mathematical statement does not give sufficient emphasis to the adaptation feature, we should follow up on this point.  That is, $n^{-\alpha/(2\alpha+1)}$ is the optimal rate \citet{shen.ghosal.2015} for estimating an $\alpha$-H\"older smooth function, and it is not difficult to construct an estimator that achieves this, at least approximately, if $\alpha$ is known.  However, $\alpha$ is unknown in virtually all practical situations, so it is desirable for an estimator, Gibbs posterior, etc.~to adapt to the unknown $\alpha$.  The concentration rate result in Proposition~\ref{prop:quant_reg} says that the Gibbs posterior achieves nearly the optimal rate adaptively in the sense that it concentrates at nearly the optimal rate as if $\alpha$ were known.

The concentration rate in Proposition~\ref{prop:quant_reg} depends on the complexity of the parameter space as determined by $\Delta_n$ in Assumption~\ref{prop:quant_reg}(3).  For example, if the sup-norm bound on $\theta^\star$ were known, then $\Delta_n$ and the learning rate $\omega_n$ could be taken as constants and the rate would be optimal up to a $(\log n)^{1/2}$ factor.  On the other hand, if greater complexity is allowed, e.g., $\Delta_n = (\log n)^p$ for some power $p>0$, then the concentration rate takes on an additional $(\log n)^{2p}$ factor, which is not a serious concern.  

\section{Application: personalized MCID}
\label{S:mcid}

\subsection{Problem setup}

In the medical sciences, physicians who investigate the efficacy of new treatments are challenged to determine both {\em statistically} and {\em practically} significant effects.  In many applications some quantitative effectiveness score can be used for assessing the statistical significance of the treatment, but physicians are increasingly interested also in patients' qualitative assessments of whether they believed the treatment was effective.  The aim of the approach described below is to find the cutoff on the effectiveness score scale that best separates patients by their reported outcomes.  That cutoff value is called the {\em minimum clinically important difference}, or MCID.  
For this application we follow up on the MCID problem discussed in \citet{syring.martin.mcid} with a covariate-adjusted, or \emph{personalized}, version.  In medicine, there is a trend away from the classical ``one size fits all'' treatment procedures, to treatments that are tailored more-or-less to each individual.  Along these lines, naturally, doctors would be interested to understand how that threshold for practical significance depends on the individual, hence there is interest in a so-called {\em personalized MCID} \citep{hedayat,zhao.2020}.  

Let the data $U^n = (U_1,\ldots,U_n)$ be iid $P$, where each observation is a triple $U_i=(X_i, Y_i, Z_i)$ denoting the patient's diagnostic measurement, their self-reported effectiveness outcome $Y_i \in \{-1,1\}$, and covariate value $Z_i \in \ZZ \subseteq \RR^q$, for $i=1,\ldots,n$ and $q \geq 1$.  In practice the diagnostic measurement has something to do with the effectiveness of the treatment so one can imagine examples including blood pressure, blood glucose level, and viral load.  Examples of covariates include a patient's age, weight, and gender.  The idea is that the $x$-scale cutoff for practical significance would depend on the covariate $z$, hence the MCID is a function, say, $\theta(z)$, and the goal is to learn this function.  

The true MCID $\theta^\star$ is defined as the solution to an optimization problem.  That is, if 
\begin{equation} \ell_\theta(x,y,z) = \tfrac12 [1 - y \, \sign\{x - \theta(z)\}], \quad (x,y,z) \in \RR \times \{-1,1\} \times \ZZ, \end{equation}
then the expected loss is $R(\theta) = P[Y \neq \sign\{X - \theta(Z)\}]$, and the true MCID function is defined as the minimizer $\theta^\star = \arg\min_{\theta \in \Theta} R(\theta)$, where the minimum is taken over a class $\Theta$ of functions on $\ZZ$.  Alternatively, as in Section~\ref{SS:classification}, the true $\theta^\star$ satisfies $\eta_z(x)>\tfrac12$ if $x>\theta^\star(z)$ and $\eta_z(x)\leq \tfrac12$ if $x\leq \theta^\star(z)$, where $\eta_z(x) = P(Y=1 \mid X=x, \,Z=z)$ is the conditional probability function.

As described in Section~\ref{S:gibbs}, the Gibbs posterior distribution is based on an empirical risk function which, in the present case, is given by 
\begin{equation}
\label{eq:emp_Rn_mcid}R_n(\theta) = \frac{1}{2n} \sum_{i=1}^n [ 1 - Y_i \, \sign\{X_i - \theta(Z_i)\}], \quad \theta \in \Theta. \end{equation}
In order to put this theory into practice, it is necessary to give the function space $\Theta$ a lower-dimensional parametrization.  In particular, we consider a true MCID function $\theta^\star$ belonging to a H\"older class as in Assumption~\ref{prop:pred_sq} but with unknown smoothness, as in Section~\ref{SS:quantile.reg}.  And, we model $\theta^\star$ by a linear combination of basis functions $\theta(z) = \theta_{J,\beta}(z) := \textstyle\sum_{j=1}^J \beta_j f_j(z)$, for basis functions $f_j$, $j=1,\ldots,J$.  Then, each $\theta$ is identified by a pair $(J,\beta)$ consisting of a positive integer $J$ and a $J$-dimensional coefficient vector $\beta$.  We use cubic B-splines in the numerical examples in Section~2 of the supplmentary material, but any basis capable of approximating $\theta^\star$ will work, and see \eqref{eq:approx.smooth}.  

The prior setup is similar to that in Section~\ref{SS:quantile.reg}, \eqref{eq:prior_bar}.  That is, the prior is specified hierarchically with a marginal prior $\pi$ for $J$ and a suitable conditional prior $\widetilde\Pi_J$ for $\beta_J$, given $J$.  And, as mentioned before, very simple choices of the marginal and conditional priors achieve the desired adaptive rates.   

\subsection{Concentration rate result}

Assumption~\ref{prop:pers_mcid} below concerns the smoothness of $\theta^\star$ and requires the chosen basis satisfy the approximation property used previously; it also refers to the same mild assumptions on random series priors used in Section~\ref{SS:quantile.reg} sufficient to ensure adequate prior mass is assigned to a neighborhood of $\theta^\star$; finally, it assumes a margin condition on the classifier like that used in Section~\ref{SSS:massart} and Assumption~\ref{asmp:massart}(5).  These conditions are sufficient to establish a Gibbs posterior concentration rate.

\begin{asmp}
\label{assump:bdd_density}
\mbox{}
\begin{enumerate}
\item The true MCID function $\theta^\star:\ZZ \to \RR$ for a compact subset $\ZZ$ of $\RR$ and $\theta^\star$ is H\"older smooth with parameters $(\alpha, L)$ (see Assumption~\ref{prop:pred_sq}.1);  
\item the basis functions satisfy the approximation property in \eqref{eq:approx.smooth};
\item the prior distribution $\Pi$ for $\theta$ is defined hierarchically as in \eqref{eq:prior_bar} with marginal and conditional priors $(\pi, \widetilde \Pi)$ for $J$ and $\beta_J$ given $J=j$ that satisfy \eqref{eq:prior.j} and \eqref{eq:prior.beta}; and,
\item there exists $h \in (0,1)$ such that $P\{|2\eta_Z(X) - 1| \leq h\} = 0$; and,
\item the conditional distribution, $P_z$, of $X$, given $Z=z$, has a density with respect to Lebesgue measure that is uniformly bounded away from infinity.
\end{enumerate}
\end{asmp}

\begin{proposition}
\label{prop:pers_mcid}
Suppose Assumption~\ref{prop:pers_mcid} holds, with $\alpha$ as defined there, and set $\eps_n = (\log n) n^{-\alpha/(1+\alpha)}$.  For any fixed $\omega>0$ the Gibbs posterior concentrates at rate $\eps_n$ with respect to the divergence 
\begin{align}
d(\theta,\theta^\star) & = P\{\theta(Z) \wedge \theta^\star(Z) \leq X \leq \theta(Z) \vee \theta^\star(Z)\} \nonumber \\ 
& = \int_\ZZ \int_{\theta(z) \wedge \theta^\star(z)}^{\theta(z) \vee \theta^\star(z)} P_z(dx) \, P(dz). 
\end{align}
\end{proposition}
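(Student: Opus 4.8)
The plan is to apply Theorem~\ref{thm:rate} with exponent $r=1$ and the divergence $d$ of the statement, so the work reduces to (i) verifying the sub-exponential-type Condition~\ref{cond:loss} for the misclassification loss, and (ii) verifying the prior-mass bound \eqref{eq:prior.rate} with $r=1$ at the rate $\eps_n=(\log n)\,n^{-\alpha/(1+\alpha)}$. Because $\ell_\theta$ is bounded, the excess loss is bounded for \emph{every} $\theta$, so --- in contrast to the quantile-regression-curve problem of Section~\ref{SS:quantile.reg} --- no sieve (Condition~\ref{cond:loss.b}) is needed and the prior may be the unrestricted hierarchical prior satisfying \eqref{eq:prior.j}--\eqref{eq:prior.beta}.

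For (i) I would first unpack the excess loss. Since $\ell_\theta(x,y,z)=\tfrac12[1-y\sign\{x-\theta(z)\}]\in\{0,1\}$ is the misclassification indicator, $\ell_\theta(u)-\ell_{\theta^\star}(u)$ takes values in $\{-1,0,1\}$ and is supported on the disagreement event $D_\theta=\{(x,z):\theta(z)\wedge\theta^\star(z)\le x\le\theta(z)\vee\theta^\star(z)\}$, which has $P(D_\theta)=d(\theta,\theta^\star)$ by construction. A short case split on the sign of $\theta(z)-\theta^\star(z)$ shows that on $D_\theta$ the excess loss equals $sY$ with $s=\sign\{x-\theta^\star(z)\}\in\{-1,1\}$, hence has conditional mean $|2\eta_z(x)-1|$, and the margin condition in Assumption~\ref{assump:bdd_density}(4) forces this to exceed $h$ a.s.\ on $D_\theta$. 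Integrating gives $h\,d(\theta,\theta^\star)\le m(\theta,\theta^\star)=R(\theta)-R(\theta^\star)\le d(\theta,\theta^\star)$, while $(\ell_\theta-\ell_{\theta^\star})^2$ is the indicator of $D_\theta$, so $v(\theta,\theta^\star)\le d(\theta,\theta^\star)$; in particular the Bernstein condition \eqref{eq:bernstein} holds with $\alpha=1$. Plugging $m\ge h\,d$ and $v\le d$ into the bounded-loss bound \eqref{eq:wasserman} with $C=1$ yields $Pe^{-\omega(\ell_\theta-\ell_{\theta^\star})}\le\exp\{-\omega\,d(\theta,\theta^\star)\,(h-\omega C(\omega))\}$, and since $\omega C(\omega)\downarrow 0$ as $\omega\downarrow 0$ there is a constant $\bar\omega>0$ (depending only on $h$) below which $\omega C(\omega)\le h/2$; for such $\omega$, separation $d(\theta;\theta^\star)>\delta$ forces $Pe^{-\omega(\ell_\theta-\ell_{\theta^\star})}<e^{-(h/2)\omega\delta}$, i.e.\ Condition~\ref{cond:loss} with $r=1$ and $K=h/2$.

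For (ii), Assumption~\ref{assump:bdd_density}(5) bounds the conditional density of $X$ given $Z$ by some $\bar p$, so the inner integral defining $d$ is at most $\bar p\,|\theta(z)-\theta^\star(z)|$ and hence $d(\theta,\theta^\star)\le\bar p\,\|\theta-\theta^\star\|_\infty$; since $m\vee v\le d$, it suffices to lower-bound $\log\Pi(\{\|\theta-\theta^\star\|_\infty\le\eps_n/\bar p\})$ by $-C_1 n\eps_n$. I would take $J=J_n\asymp\eps_n^{-1/\alpha}$, so that the approximation property \eqref{eq:approx.smooth} supplies $\beta^\dagger\in\RR^{J_n}$ with $\|\beta^\dagger\|_\infty<H$ and $\|\theta_{\beta^\dagger}-\theta^\star\|_\infty\lesssim J_n^{-\alpha}\lesssim\eps_n$, and then use boundedness of the basis functions to turn an $\ell_2$-ball in $\beta$ into a sup-norm ball in $\theta$; keeping only the $j=J_n$ term of the mixture, conditions \eqref{eq:prior.j}--\eqref{eq:prior.beta} give $\Pi(\{\|\theta-\theta^\star\|_\infty\le c\eps_n\})\gtrsim\pi(J_n)\,\Pitilde_{J_n}(\{\|\theta_\beta-\theta_{\beta^\dagger}\|_\infty\le c'\eps_n\})\gtrsim e^{-C_2 J_n\log J_n}e^{-C_3 J_n\log(1/\eps_n)}\gtrsim e^{-C_4 J_n\log n}$. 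Finally one checks $J_n\log n\asymp(\log n)^{1-1/\alpha}n^{1/(1+\alpha)}$ is $\lesssim n\eps_n=(\log n)n^{1/(1+\alpha)}$ --- exactly the slack provided by the extra $\log n$ factor in $\eps_n$ --- and that $n\eps_n^r=n\eps_n\to\infty$, after which Theorem~\ref{thm:rate} delivers concentration at rate $\eps_n$ with respect to $d$ for a large enough constant $M$.

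The step I expect to be the main obstacle is (ii): calibrating $J_n$ to balance the approximation error (which wants $J_n$ large) against the prior-mass budget $n\eps_n^r$ (which wants $J_n$ small), moving freely among $\|\cdot\|_\infty$ on functions, $\|\cdot\|_2$ on coefficients, and the divergence $d$, and tracking all logarithmic factors so that \eqref{eq:prior.rate} holds with $r=1$ at the stated rate. Everything in (i) --- the description of the excess loss on $D_\theta$, the margin lower bound on $m$, and the invocation of \eqref{eq:wasserman} --- is routine once the disagreement-event picture is in hand.
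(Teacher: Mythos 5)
Your proposal is correct and follows essentially the same path as the paper's proof: verify Condition~\ref{cond:loss} with $r=1$ by sandwiching $h\,d \le m \le d$ and $v \le d$ via the Massart margin and the bounded density assumption and then applying the bounded-loss MGF bound \eqref{eq:wasserman}, verify \eqref{eq:prior.rate} with $r=1$ by a Shen--Ghosal-type lower bound on a sup-norm ball around $\theta^\star$ using the hierarchical series prior and the approximation property, and invoke Theorem~\ref{thm:rate}. The only cosmetic differences are that you spell out the disagreement-event structure of the excess loss (which the paper leaves implicit), and you take $J_n\asymp\eps_n^{-1/\alpha}$ rather than the paper's $J_n=n^{1/(1+\alpha)}$ --- both calibrations make $J_n\log n\lesssim n\eps_n$ with the approximation error $J_n^{-\alpha}\lesssim\eps_n$, so the conclusion is identical; note also that, like the paper's own argument, yours really needs $\omega<\bar\omega(h)$, so the proposition's phrase ``any fixed $\omega>0$'' should implicitly be read as ``any sufficiently small fixed $\omega>0$.''
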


The Gibbs posterior distribution we have defined for the personalized MCID function achieves the concentration rate in Proposition~\ref{prop:pers_mcid} adaptively to the unknown smoothness $\alpha$ of $\theta^\star$.  \citet{mammen} consider estimation of the boundary curve of a set, and they show that the minimax optimal rate is $n^{-\alpha/(\alpha+1)}$ when the boundary curve is $\alpha$-H\"older smooth and distance is measured by the Lebesgue measure of the set symmetric difference.  In our case, if $(X,Z)$ has a joint density, bounded away from 0, then our divergence measure $d(\theta,\theta^\star)$ is equivalent to 
\[ \text{Leb}(\{(x,z): x \leq \theta(z)\} \, \triangle \, \{(x,z): x \leq \theta^\star(z)\}), \]
in which case our rate is within a logarithmic factor of the minimax optimal rate.

\citet{hedayat} also study the personalized MCID and derive a convergence rate for an M-estimator of $\theta^\star$ based on a smoothed and penalized version of \eqref{eq:emp_Rn_mcid}.  It is difficult to compare our result with theirs, for instance, because their rate depends on two user-controlled sequences related to the smoothing and penalization of their loss. But, as mentioned above, our rate is near optimal in certain cases, so the asymptotic results in \citet{hedayat} cannot be appreciably better than our rate in Proposition~\ref{prop:pers_mcid}.

\subsection{Numerical illustrations}
\label{SSS:mcid_examples}
We performed two simulation examples to investigate the performance of the Gibbs posterior for the personalized MCID.  In both examples we use a constant learning rate $\omega = 1$, but we generally recommend data-driven learning rates; and see \citet{syring.martin.scaling}.  

For the first example we sample $n=100$ independent observations of $(X,Y,Z)$.  The covariate $Z$ is sampled from a uniform distribution on the interval $[0,3]$.  Given $Z=z$, the diagnostic measure $X$ is sampled from a normal distribution with mean $z^3-3z^2+5$ and variance $1$, and the patient-reported outcome $Y$ is sampled from a Rademacher distribution with probability 
\begin{equation}
\eta_z(x) = \begin{cases}
\Phi(x;z^3-3z^2+5-0.05,1/2), & x>z^3-3z^2+5 \\
\Phi(x;z^3-3z^2+5+0.05,1/2), & x \leq z^3-3z^2+5,
\end{cases} \end{equation}
where $\Phi(x;\mu, \sigma)$ denotes the $\nm(\mu, \sigma)$ distribution function.  The addition of $\pm 0.05$ in the formula of $\eta_z(x)$ is to meet the margin condition in Assumption~5.4 in the main article.  As mentioned above, we parametrize the MCID function by piecewise polynomials, specifically, cubic B-splines.  For highly varying MCID functions, a reversible-jump MCMC algorithm that allows for changing numbers of and break points in the piecewise polynomials may be helpful; see \citet{syring.martin.image}.  However, for this example we fix the parameter dimension to just six B-spline functions, which allows us to use a simple Metropolis--Hastings algorithm to sample from the Gibbs posterior distribution.  Since the dimension is fixed, the prior is only needed for the B-spline coefficients, and for these we use diffuse independent normal priors with mean zero and standard deviation of $6$.  Over $250$ replications, the average empirical misclassification rate is 16\% using the Gibbs posterior mean MCID function compared to 13\% using the true MCID function when applying these two classifiers to a hold-out sample of $100$ data points.  

The left pane of Figure~\ref{fig:compare} shows the results for one simulated data set under the above formulation.  Even with only $n=100$ samples, the Gibbs posterior does a good job of centering on the true MCID function.  The right pane displays the pointwise Gibbs posterior mean MCID function for each of $250$ repetitions of simulation~1, along with the overall pointwise mean of these functions, and the true MCID function.  The Gibbs posterior mean function is stable across repetitions of the simulation.  
\begin{figure}[t]
\centering
\begin{subfigure}{0.45\textwidth}
\centering
\includegraphics[width=1.0\linewidth]{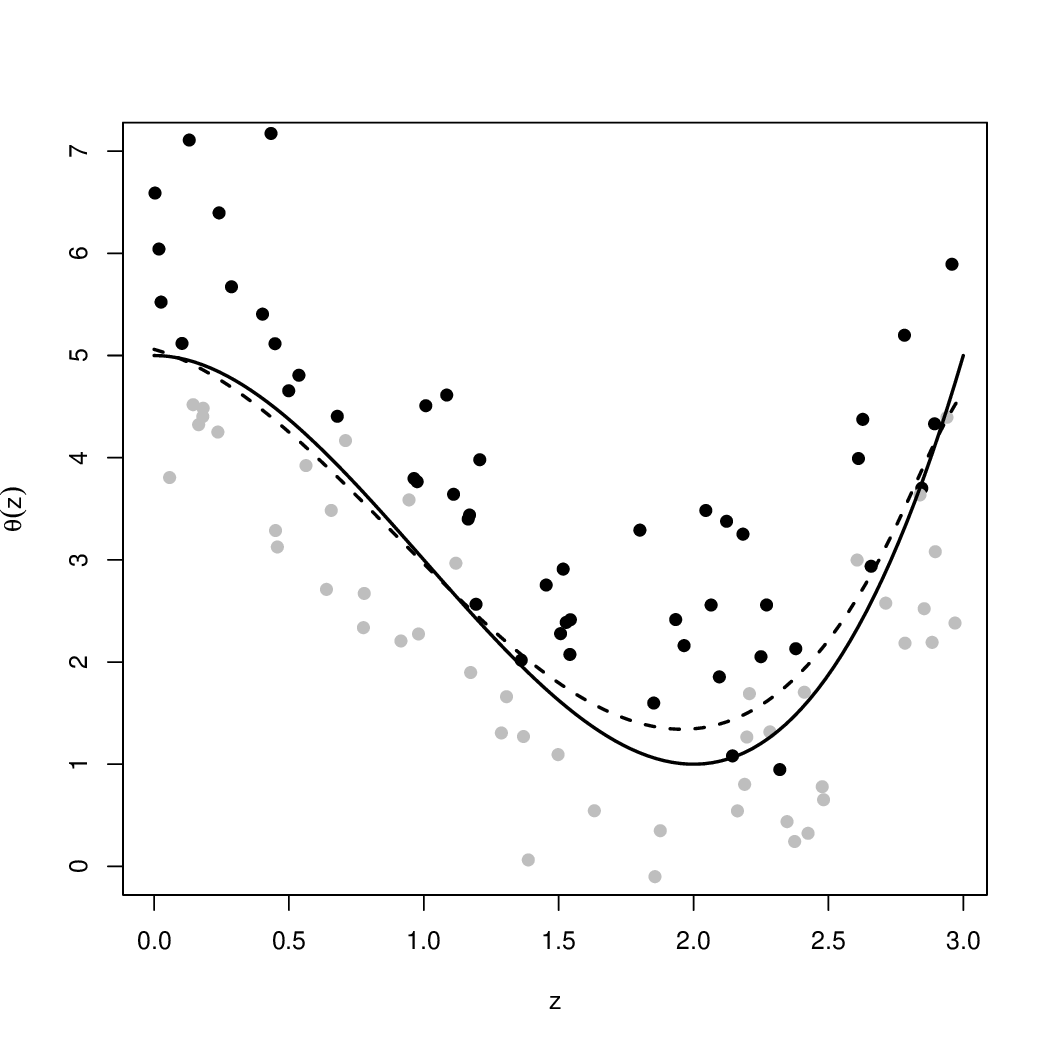}
\end{subfigure}\hspace{.5cm}
\begin{subfigure}{0.45\textwidth}
\centering
\includegraphics[width=1.0\linewidth]{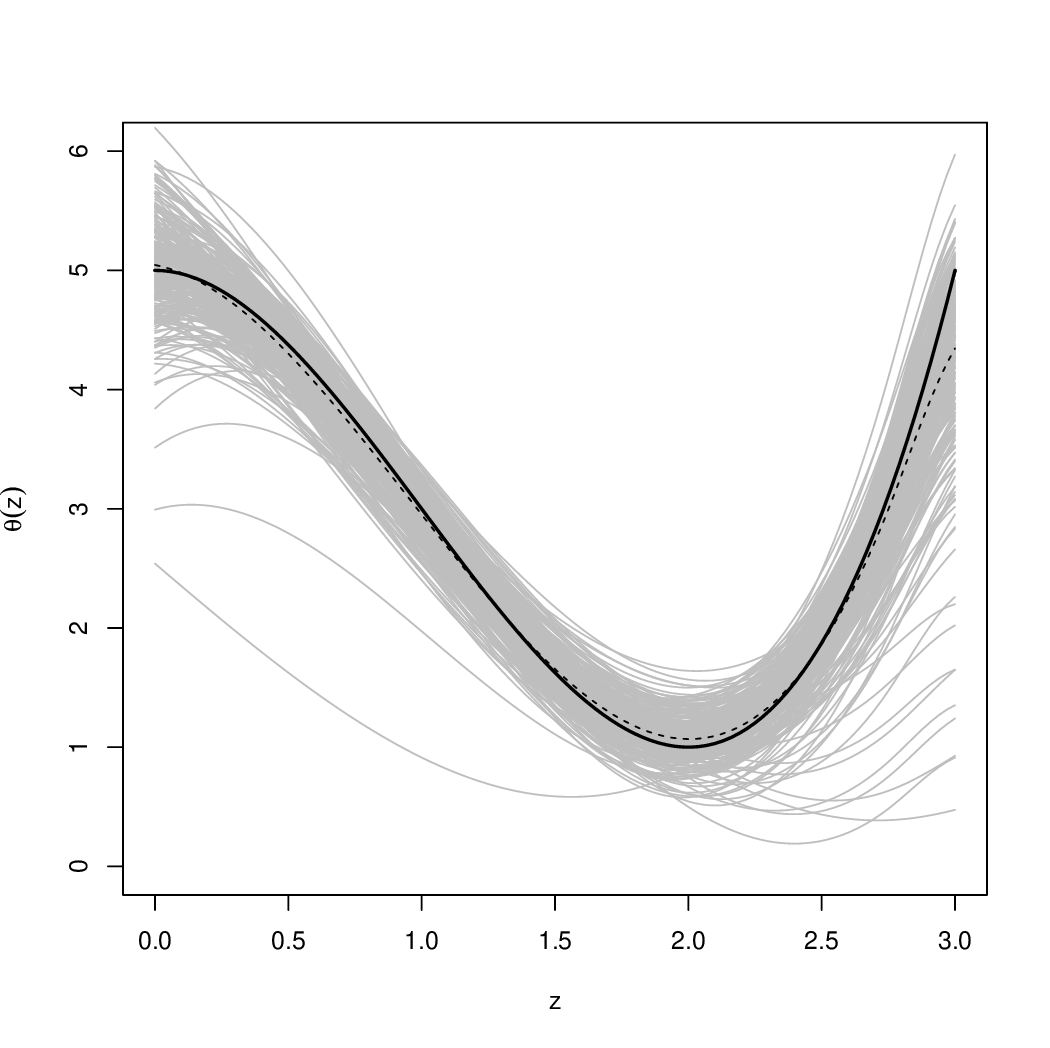}
\end{subfigure}
\caption{Left: the posterior mean function (dashed), true MCID function (solid), and data ($Y=1$ black points, $Y=-1$ gray points) for one replication of the first simulation.  Right: the Gibbs posterior mean MCID functions (solid gray) for each of $250$ repetitions of the first simulation, the overall mean function across those repetitions (dashed black), and the true MCID function (solid black).}
\label{fig:compare}
\end{figure}

The second example we consider includes a vector covariate similar to that in Example~1 of Scenario~2 in \citet{hedayat}.  We sample $n=1000$ independent observations of $(X,Y,Z)$, where $Z=(Z_1,Z_2)$ has a uniform distribution on the square $[0,3]^2$.  Given $Z=z$, the diagnostic measure $X$ has a normal distribution with mean $z_1+2z_2$ and variance $1$, and the patient-reported outcome $Y$ is a Rademacher random variable with probability 
\begin{equation}\eta_z(x) = \begin{cases}
\Phi(x;z_1+2z_2-0.05,1), & x>z_1+2z_2 \\
\Phi(x;z_1+2z_2+0.05,1), & x \leq z_1+2z_2.
\end{cases}  \end{equation}
In practice it is common to have more than one covariate, so this second example is perhaps more realistic than the first.  However, it is much more difficult to visualize the MCID function for more than one covariate, so we do not display any figures for this example.  We use tensor product B-splines with $8$ fixed B-spline functions ($16$ coefficients) to parametrize the MCID function.  Again, we use independent diffuse normal priors with zero mean and standard deviation equal to $6$ for each coefficient.  Over $100$ repetitions of this simulation we observe an average empirical misclassification rate of 24\% using the Gibbs posterior mean MCID function compared to 23\% using the true MCID function when applied to a hold-out sample of $1000$ data points.
 
\ifthenelse{1=0}{}{
\section{Concluding remarks}
\label{S:discuss}

In this paper we focus on developing some simple, yet general, techniques for establishing asymptotic concentration rates for Gibbs posteriors.  A key take-away is that the robustness to model misspecification offered by the Gibbs framework does not come at the expense of slower concentration rates.  Indeed, in the examples presented here---and others presented elsewhere \citep{syring.martin.image}---the rates achieved are the same as those achieved by traditional Bayesian posteriors and (nearly) minimax optimal. Another main point is that Gibbs posterior distributions are not inherently challenging to analyze; on the contrary, the proofs presented herein are concise and transparent.  An additional novelty to the analysis presented here is that we consider cases where the learning rate can be non-constant, i.e., either a vanishing sequence or data-dependent, and prove corresponding posterior concentration rate results.    

Here the focus has been on deriving Gibbs posteriors with the best possible concentration rates, and selection of learning rates has proceeded with these asymptotic properties in mind.  In other works \citep{syring.martin.scaling}, random learning rates are derived for good uncertainty quantification in finite-samples.  We conjecture, however, that the two are not mutually-exclusive, and that learning rates arising as solutions to the calibration algorithm in \citet{syring.martin.scaling} also have the desirable concentration rate properties.  Proving this conjecture seems challenging, but Section~\ref{SS:exp.rates} provides a first step in this direction.} 

\section*{Acknowledgments}

The authors sincerely thank the reviewers for their helpful feedback on previous versions of the manuscript. This work is partially supported by the U.S.~National Science Foundation, DMS--1811802.

\appendix

\section{Proofs of the main theorems}
\label{S:proofs}

\subsection{Proof of Theorem~1}
\label{proof:rate}

As a first step, we first state and prove a result that gives an in-probability lower bound on the denominator of the Gibbs posterior, the so-called partition function.  The proof closely follows that of Lemma~1 in \citet{shen.wasserman.2001} but is, in some sense, more general, so we present the details here for the sake of completeness.  

\begin{lem}
\label{lem:den}
Define 
\begin{equation}
\label{eq:den}
D_n = \int e^{-\omega \, n\{R_n(\theta) - R_n(\theta^\star)\}} \, \Pi(d\theta). 
\end{equation}
If $G_n = \{\theta: m(\theta,\theta^\star) \vee v(\theta,\theta^\star) \leq \eps_n^r\}$ is as in (12), with $\eps_n$ satisfying $\eps_n \to 0$ and $n \eps_n^r \to \infty$, then $D_n > \tfrac12 \Pi(G_n) e^{-2\omega n \eps_n^r}$ with $P^n$-probability converging to 1.  
\end{lem}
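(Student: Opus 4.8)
The plan is to follow the strategy of Lemma~1 in \citet{shen.wasserman.2001}. First I would discard the part of $\Theta$ outside $G_n$ and renormalize the prior there: writing $\mu_n(\cdot) = \Pi(\cdot \cap G_n)/\Pi(G_n)$, which is a well-defined probability measure because \eqref{eq:prior.rate} forces $\Pi(G_n) \geq e^{-C_1 n\eps_n^r} > 0$, we have
\[ D_n \;\geq\; \int_{G_n} e^{-\omega n\{R_n(\theta) - R_n(\theta^\star)\}}\,\Pi(d\theta) \;=\; \Pi(G_n) \int e^{-\omega n\{R_n(\theta) - R_n(\theta^\star)\}}\,\mu_n(d\theta). \]
Jensen's inequality applied to the convex map $x \mapsto e^{-\omega n x}$ then yields
\[ D_n \;\geq\; \Pi(G_n)\,\exp\Bigl\{ -\omega n\, Z_n \Bigr\}, \qquad Z_n := \int \{R_n(\theta) - R_n(\theta^\star)\}\,\mu_n(d\theta). \]
It therefore suffices to show that the data-dependent quantity $Z_n$ satisfies $Z_n < 2\eps_n^r$ with $P^n$-probability tending to one, since on that event $D_n \geq \Pi(G_n) e^{-2\omega n \eps_n^r} > \tfrac12 \Pi(G_n) e^{-2\omega n \eps_n^r}$.

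Next I would control the first two moments of $Z_n$ under $P^n$. Because $R_n(\theta) - R_n(\theta^\star) = \PP_n(\ell_\theta - \ell_{\theta^\star})$, Fubini's theorem gives $P^n Z_n = \int m(\theta,\theta^\star)\,\mu_n(d\theta) \leq \eps_n^r$, since $m(\theta,\theta^\star) \leq \eps_n^r$ for $\theta \in G_n$. For the variance, I would use the elementary inequality $\mathrm{Var}\bigl(\int X_\theta\,\mu_n(d\theta)\bigr) \leq \int \mathrm{Var}(X_\theta)\,\mu_n(d\theta)$ (a consequence of Jensen applied to the centered integrand, valid since $\mu_n$ is a probability measure) together with the iid assumption, which gives $\mathrm{Var}\{\PP_n(\ell_\theta - \ell_{\theta^\star})\} = n^{-1} v(\theta,\theta^\star)$. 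Hence $\mathrm{Var}(Z_n) \leq n^{-1}\int v(\theta,\theta^\star)\,\mu_n(d\theta) \leq n^{-1}\eps_n^r$, again using that $v(\theta,\theta^\star) \leq \eps_n^r$ on $G_n$.

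Finally, Chebyshev's inequality closes the argument: since $P^n Z_n \leq \eps_n^r$,
\[ P^n\bigl(Z_n \geq 2\eps_n^r\bigr) \;\leq\; P^n\bigl(|Z_n - P^n Z_n| \geq \eps_n^r\bigr) \;\leq\; \frac{\mathrm{Var}(Z_n)}{\eps_n^{2r}} \;\leq\; \frac{1}{n\eps_n^r} \;\longrightarrow\; 0, \]
by the hypothesis $n\eps_n^r \to \infty$. Combined with the lower bound on $D_n$ displayed above, this gives $P^n\bigl(D_n \leq \tfrac12 \Pi(G_n) e^{-2\omega n \eps_n^r}\bigr) \leq (n\eps_n^r)^{-1} \to 0$, which is exactly the claim (and in fact the quantitative bound used in \eqref{eq:prior.bound.tmp}). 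There is no serious obstacle here; the only points demanding any care are the two Fubini/Jensen interchanges, which are legitimate because the excess loss has finite first and second moments on $G_n$ by the very definition of $G_n$, and the observation that nothing in the probability bound depends on the learning rate, so it holds for every fixed $\omega > 0$.
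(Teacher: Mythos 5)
Your proof is correct, and it takes a genuinely different route from the paper's. The paper defines a \emph{pointwise} standardized statistic $Z_n(\theta) = [\{nR_n(\theta)-nR_n(\theta^\star)\}-nm(\theta,\theta^\star)]/\{nv(\theta,\theta^\star)\}^{1/2}$, restricts the integral defining $D_n$ to the data-dependent set $G_n \cap \{\theta: |Z_n(\theta)| < (n\eps_n^r)^{1/2}\}$ (on which the risk difference is at most $2n\eps_n^r$ for every $\theta$), and then shows via Markov's inequality, Fubini, and a $\theta$-by-$\theta$ Chebyshev bound $P^n(|Z_n(\theta)|\geq (n\eps_n^r)^{1/2})\leq (n\eps_n^r)^{-1}$ that this restricted set retains at least half the prior mass of $G_n$ with probability at least $1 - 2(n\eps_n^r)^{-1}$; the factor $\tfrac12$ in the lemma arises from that Markov step. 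You instead restrict to $G_n$ unconditionally, apply Jensen's inequality to pull the $\mu_n$-average into the exponent, and then apply Chebyshev directly to the single scalar random variable $\int\{R_n(\theta)-R_n(\theta^\star)\}\,\mu_n(d\theta)$, using a variance-of-the-average bound that is itself a Jensen/Fubini interchange. Both arguments use exactly the same two-moment information encoded in $G_n$ (the upper bound on $m$ to control the mean, the upper bound on $v$ to control the variance after an iid rescaling by $n^{-1}$), and both hinge on $n\eps_n^r\to\infty$. Your route is a bit more economical and yields a slightly sharper tail, $P^n(D_n \leq \tfrac12 \Pi(G_n)e^{-2\omega n\eps_n^r}) \leq (n\eps_n^r)^{-1}$ rather than $2(n\eps_n^r)^{-1}$ (and with $\tfrac12$ replaceable by $1$); the paper's route makes the pointwise deviation control more visible, which is the mechanism it references elsewhere when contrasting with the weaker ``$m$-neighborhood'' prior condition of Gr\"unwald and Mehta.
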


\begin{proof}
Define a standardized version of the empirical risk difference, i.e., 
\[ Z_n(\theta) = \frac{\{nR_n(\theta) - nR_n(\theta^\star)\} - n m(\theta)}{\{n v(\theta)\}^{1/2}}, \]
where $m(\theta) = m(\theta,\theta^\star)$ and $v(\theta) = v(\theta,\theta^\star)$, the mean and variance of the risk difference.  Of course, $Z_n(\theta)$ depends (implicitly) on the data $U^n$.  Let 
\[ \Z_n = \{(\theta, U^n): |Z_n(\theta)| \geq (n \eps_n^r)^{1/2} \}. \]
Next, define the cross-sections 
\[ \Z_n(\theta) = \{U^n: (\theta, U^n) \in \Z_n\} \quad \text{and} \quad \Z_n(U^n) = \{\theta: (\theta, U^n) \in \Z_n\}. \]
For $G_n$ as above, since 
\[ nR_n(\theta) - nR_n(\theta^\star) = n m(\theta) + \{n v(\theta)\}^{1/2} Z_n(\theta), \]
and $m$, $v$, and $Z_n$ are suitably bounded on $G_n \cap \Z_n(U^n)^c$, we immediately get 
\[ D_n \geq \int_{G_n \cap \Z_n(U^n)^c} e^{-\omega n m(\theta) - \omega\{n v(\theta)\}^{1/2} Z_n(\theta)} \, \Pi(d\theta) \geq e^{-2\omega n \eps_n^r} \Pi\{G_n \cap \Z_n(U^n)^c\}. \]
From this lower bound, we get  
\begin{align*}
P^n\{D_n \leq \tfrac12 \Pi(G_n) e^{-2\omega n \eps_n^r}\} & \leq P^n\bigl[ e^{-2\omega n \eps_n^r} \Pi\{G_n \cap \Z_n(U^n)^c\} \leq \tfrac12 \Pi(G_n) e^{-2\omega n \eps_n^r} \bigr] \\
& = P^n\bigl[ \Pi\{G_n \cap \Z_n(U^n)\} \geq \tfrac12 \Pi(G_n) \bigr] \\
& \leq \frac{2 P^n \Pi\{G_n \cap \Z_n(U^n)\}}{\Pi(G_n)},
\end{align*}
where the last line is by Markov's inequality.  We can then simplify the expectation in the upper bound displayed above using Fubini's theorem:
\begin{align*}
P^n \Pi\{G_n \cap \Z_n(U^n)\} & = \int \int 1\{\theta \in G_n \cap \Z_n(U^n)\} \, \Pi(d\theta) \, P^n(dU^n) \\
& = \int \int 1\{\theta \in G_n\} \, 1\{\theta \in \Z_n(U^n)\} \, P^n(dU^n) \, \Pi(d\theta) \\
& = \int_{G_n} P^n\{\Z_n(\theta)\} \, \Pi(d\theta).
\end{align*}
By Chebyshev's inequality, $P^n\{\Z_n(\theta)\} \leq (n\eps_n^r)^{-1}$, and hence  
\[ P^n\{D_n \leq \tfrac12 \Pi(G_n) e^{-2\omega n \eps_n^r}\} \leq 2(n\eps_n^r)^{-1}. \]
Finally, since $n\eps_n^r \to \infty$, the left-hand side vanishes, completing the proof.  
\end{proof}

For the proof of Theorem~1, write 
\[ \Pi_n(A_n) = \frac{N_n(A_n)}{D_n}, \]
where $A_n = \{\theta: d(\theta;\theta^\star) > M \eps_n\}$, $D_n$ is as in \eqref{eq:den}, and 
\[ N_n(A_n) = \int_{A_n} e^{-\omega \, n\{R_n(\theta) - R_n(\theta^\star)\}} \, \Pi(d\theta). \]
For $G_n$ as in Lemma~\ref{lem:den}, write $b_n = \frac12 \Pi(G_n) e^{-2\omega n\eps_n^r}$ for the lower bound on $D_n$.  Then 
\begin{align*}
\Pi_n(A_n) & \leq \frac{N_n(A_n)}{D_n} \, 1(D_n > b_n) + 1(D_n \leq b_n) \\
& \leq b_n^{-1} N_n(A_n) + 1(D_n \leq b_n). 
\end{align*}
By Fubini's theorem, independence of the data $U^n$, and Condition~1, we get 
\[ P^n N_n(A_n) = \int_{A_n} \{ P e^{-\omega(\ell_\theta - \ell_{\theta^\star})} \}^n \, \Pi(d\theta) < e^{-K M^r \omega n \eps_n^r}. \]
Take expectation of $\Pi_n(A_n)$ and plug in the upper bound above, along with $\Pi(G_n) \geq e^{-C_1 n \eps_n^r}$ from (15) and $P^n(D_n \leq b_n) \geq 2(n\eps_n^{r})^{-1}$ from Lemma~\ref{lem:den}, to get 
\[ P^n \Pi_n(A_n) \leq 2e^{-(\omega K M^r - C_1 - 2\omega) n \eps_n^r} + 2(n\eps_n^{r})^{-1}. \]
Since the right-hand side is vanishing for sufficiently large $M$, the claim follows.

\subsection{Proof of Theorem~2}
\label{proof:rate.rootn}

A special case of this result was first presented in \citet{bhattacharya.martin.2020}, but we are including the proof here for completeness.  

Recall that the Gibbs posterior probability, $\Pi_n(A_n)$, is a ratio, namely, $N_n(A_n) / D_n$.  Both the numerator and denominator are integrals, and the key idea here is to split the range of integration in the numerator into countably many disjoint pieces as follows:
\begin{align*}
N_n(A_n) & = \int_{d(\theta,\theta^\star) > M_n \eps_n} e^{-\omega n\{R_n(\theta) - R_n(\theta^\star)\}} \, \Pi(d\theta) \\
& = \sum_{t=1}^\infty \int_{tM_n \eps_n < d(\theta,\theta^\star) < (t+1)M_n \eps_n} e^{-\omega n\{R_n(\theta) - R_n(\theta^\star)\}} \, \Pi(d\theta).  
\end{align*}
Taking expectation of the left-hand side and moving it under the sum and under the integral on the right-hand side, we need to bound 
\[ \int_{t M_n \eps_n < d(\theta,\theta^\star) < (t+1)M_n \eps_n} \{P e^{-\omega (\ell_\theta - \ell_{\theta^\star})}\}^n \, \Pi(d\theta), \quad t=1,2,\ldots. \]
By Condition~1, on the given range of integration, the integrand is bounded above by 
\[ e^{-\omega K n (t M_n \eps_n)^r} = e^{-\omega K t^r M_n^r}, \]
so the expectation of the integral itself is bounded by 
\[ e^{-K t^r M_n^r} \Pi(\{\theta: d(\theta,\theta^\star) < (t+1) M_n \eps_n\}), \quad t=1,2,\ldots \]
Since $\Pi$ has a bounded density on the $q$-dimensional parameter space, we clearly have
\[ \Pi(\{\theta: d(\theta,\theta^\star) < (t+1) M_n \eps_n\}) \lesssim \{(t+1)M_n\eps_n\}^q. \]
Plug all this back into the summation above to get 
\[ P^n N_n(A_n) \lesssim (M_n \eps_n)^q \sum_{t=1}^\infty (t+1)^q e^{-\omega K t^r M_n^r}. \]
The above sum is finite for all $n$ and bounded by a multiple of $e^{-\omega M_n^r}$.  Then $M_n^q$ times the sum is vanishing as $n \to \infty$ and, consequently, we find that the expectation of the Gibbs posterior numerator is $o(\eps_n^q)$.  

For the denominator $D_n$, we can proceed just like in the proof of Lemma~\ref{lem:den}.  The key difference is that we redefine 
\[ \Z_n = \{(\theta, U^n): |Z_n(\theta)| \geq (Q n \eps_n^r)^{1/2} \}, \]
with an arbitrary constant $Q > 1$, so that 
\[ P^n\{D_n \leq \tfrac12 \Pi(G_n) e^{-2Q \omega n \eps_n^r}\} \leq 2(Q n \eps_n^r)^{-1}. \]
Then, just like in the proof of Theorem~1, since $n\eps_n^r=1$, we have 
\[ P^n \Pi_n(A_n) \leq \frac{o(\eps_n^q)}{e^{-2Q \omega} \eps_n^q} + 2Q^{-1}, \]
which implies 
\[ \limsup_{n \to \infty} P^n \Pi_n(A_n) \leq 2Q^{-1}. \]
Since $Q$ is arbitrary, we conclude that $P^n \Pi_n(A_n) \to 0$, completing the proof.

\subsection{Proof of Theorem~3}
\label{proof:rate.seq}

The proof is nearly identical to that of Theorem~1.  Begin with 
\[ \Pi_n(A_n) = \frac{N_n(A_n)}{D_n}, \]
where $A_n = \{\theta: d(\theta;\theta^\star) > M \eps_n\}$, $D_n$ is as in \eqref{eq:den}, and 
\[ N_n(A_n) = \int_{A_n} e^{-\omega_n \, n\{R_n(\theta) - R_n(\theta^\star)\}} \, \Pi(d\theta). \]
When the learning rate is a sequence $\omega_n$ rather than constant, Lemma~\ref{lem:den} can be applied with no alterations provided $n\omega_n\eps_n^r\rightarrow\infty$, as assumed in the statement of Theorem~3. Then, for $G_n$ as in Lemma~\ref{lem:den}, write $b_n = \frac12 \Pi(G_n) e^{-2\omega_n n\eps_n^r}$ for the lower bound on $D_n$.  Bound the posterior probability of $A_n$ by
\begin{align*}
\Pi_n(A_n) & \leq \frac{N_n(A_n)}{D_n} \, 1(D_n > b_n) + 1(D_n \leq b_n) \\
& \leq b_n^{-1} N_n(A_n) + 1(D_n \leq b_n). 
\end{align*}
By Fubini's theorem, independence of the data $U^n$, and Condition~1, we get 
\[ P^n N_n(A_n) = \int_{A_n} \{ P e^{-\omega_n(\ell_\theta - \ell_{\theta^\star})} \}^n \, \Pi(d\theta) < e^{-K M^r \omega_n n \eps_n^r}. \]
Take expectation of $\Pi_n(A_n)$ and plug in the upper bound above, along with $\Pi(G_n) \gtrsim e^{-C \omega_n n \eps_n^r}$ from (14) and $P^n(D_n \leq b_n) = o(1)$ from Lemma~\ref{lem:den}, to get 
\[ P^n \Pi_n(A_n) \lesssim e^{-(K M^r - C - 2)\omega_n n \eps_n^r} + o(1). \]
Since the right-hand side is vanishing for sufficiently large $M$, the claim follows.

\subsection{Proof of Theorem~4}
\label{proof:rate.est}

First note that if the conditions of Theorem~3 hold for $\omega_n$, then $\Pi_n^{\omega_n/2}$ also concentrates at rate $\eps_n$.  That is, at least asymptotically, there is no difference between the learning rates $\omega_n$ and $\omega_n/2$.

Next, as in the proof of Theorem~1, denote the numerator and denominator of $\Pi_n^{\hat\omega_n}(A)$ by $N_n^{\hat\omega_n}(A)$ and $D_n^{\hat\omega_n}$.  Let $W=\{U^n:\omega_n/2<\hat\omega_n<\omega_n\}$.  By the assumptions of Theorem~4, $P^n 1(W) \rightarrow 1$, so in the argument below we focus on bounding the numerator and denominator of the Gibbs posterior given $W$.

Restricting to the set $W$, using Lemma~\ref{lem:den}, and noting that $\omega \mapsto e^{-2n\omega\eps_n^r}$ decreases in $\omega$ we have $D_n^{\hat\omega_n} > b_n$ with $P^n-$probability approaching $1$ where 
\[b_n = \tfrac12 \Pi(G_n)e^{-2n\omega_n\eps_n^r} \gtrsim e^{-C_1n\omega_n\eps_n^r}\] 
for some $C_1>0$, where the last inequality follows from (14).

Set $\mathbb{W}=\{\theta:R_n(\theta)-R_n(\theta^\star)>0\}$ and then bound the numerator as as follows:
\begin{align*}
N_n^{\hat\omega_n}(A_n) &= N_n^{\hat\omega_n}(A_n \cap \mathbb{W}) + N_n^{\hat\omega_n}(A_n\cap \mathbb{W}^c)\\
&\leq  \int_{A_n\cap \mathbb{W}}e^{-\omega_n/2[R_n(\theta)-R_n(\theta^\star)]}\Pi(d\theta)+ \int_{A_n\cap \mathbb{W}^c}e^{-\omega_n[R_n(\theta)-R_n(\theta^\star)]}\Pi(d\theta)\\
&\leq \int_{A_n}e^{-\omega_n/2[R_n(\theta)-R_n(\theta^\star)]}\Pi(d\theta)+ \int_{A_n}e^{-\omega_n[R_n(\theta)-R_n(\theta^\star)]}\Pi(d\theta)\\
& = N_n^{\omega_n/2}(A_n) + N_n^{\omega_n}(A_n).
\end{align*}
Then, by Condition~1, Fubini's theorem, and independence of $U^n$, we have 
\[ P^n N_n^{\hat\omega_n}(A_n) \leq 2e^{-(1/2) KM^rn\omega_n\eps_n^r}. \]
Similar to the proof of Theorem~1, we can bound $\Pi_n^{\hat\omega_n}(A_n)$ using the above exponential bounds on $N_n^{\hat\omega_n}(A_n)$ and $D_n^{\hat\omega_n}$:
\begin{align*}
\Pi_n^{\hat\omega_n}(A_n) &\leq 1(W)N_n^{\hat\omega_n}(A_n)/D_n^{\hat\omega_n} + 1(W^c)\\
&\leq 1(W)b_n^{-1}N_n^{\hat\omega_n}(A_n) + 1(W)1(D_n\leq b_n) + 1(W^c).
\end{align*}
Taking expectation of $\Pi_n^{\hat\omega_n}(A_n)$ and applying the numerator and denominator bounds and the fact $P^n(W) \rightarrow 1$, we have
\[ P^n\Pi_n^{\hat\omega_n}(A_n) \lesssim e^{-n\omega_n\eps_n^r(M^rK/2 - C_1)} + o(1). \]
The result follows since $M>0$ is arbitrary.

\subsection{Proof of Theorem~5}
\label{proof:rate.b}

The proof is very similar to that of Theorem~1.  Start with the decomposition 
\[ \Pi_n(A_n) = \Pi_n(A_n\cap\Theta_n)+\Pi_n(A_n\cap \Theta_n^\comp),\]
where $A_n = \{\theta: d(\theta;\theta^\star) > M_n \eps_n\}$ and $\Theta_n$ is defined in Condition~2.  We consider the first term in the above decomposition. As before, we have 
\[ \Pi_n(A_n \cap \Theta_n) = \frac{N_n(A_n \cap \Theta_n)}{D_n}, \]
for $D_n$ is as in \eqref{eq:den}, and 
\[ N_n(A_n\cap\Theta_n) = \int_{A_n\cap\Theta_n} e^{-\omega_n \, n\{R_n(\theta) - R_n(\theta^\star)\}} \, \Pi(d\theta). \]
Apply Lemma~\ref{lem:den}, with $G_n = \{\theta: m(\theta,\theta^\star) \vee v(\theta,\theta^\star) \leq (K_n\eps_n)^r\}$, and write 
\[ b_n = \tfrac12 \Pi(G_n) e^{-2\omega_n nK_n^r\eps_n^r} \]
for the lower bound on $D_n$.  This immediately leads to 
\begin{align*}
\Pi_n(A_n\cap\Theta_n) & \leq \frac{N_n(A_n \cap \Theta_n)}{D_n} \, 1(D_n > b_n) + 1(D_n \leq b_n) \\
& \leq b_n^{-1} N_n(A_n \cap \Theta_n) + 1(D_n \leq b_n). 
\end{align*}
By Fubini's theorem, independence of the data $U^n$, and Condition~2, we get 
\[ P^n N_n(A_n\cap \Theta_n) = \int_{A_n\cap\Theta_n} \{ P e^{-\omega(\ell_\theta - \ell_{\theta^\star})} \}^n \, \Pi(d\theta) < e^{- M_n^r \omega_n n K_n^r \eps_n^r}. \]
Take expectation of $\Pi_n(A_n\cap\Theta_n)$ and plug in the upper bound above, along with $\Pi(G_n) \gtrsim e^{-C n \omega_n K_n^r\eps_n^r}$ from (21) and $P^n(D_n \leq b_n) = o(1)$ from Lemma~\ref{lem:den}, to get 
\[ P^n \Pi_n(A_n \cap \Theta_n) \lesssim e^{-(M_n^r - C-2)\omega_nK_n^r n \eps_n^r} + o(1). \]
Since the right-hand side is vanishing for sufficiently large $M_n$, we can conclude that 
\[ \limsup_{n \to \infty} P^n \Pi_n(A_n \cap \Theta_n) \leq \limsup_{n \to \infty} P^n \Pi_n(A_n \cap \Theta_n^\comp). \]
Of course, if \eqref{eq:post.b}, then the upper bound in the above display is 0 and we obtained the claimed Gibbs posterior concentration rate result.

\subsection{Proof of Theorem~6}
\label{proof:rate.c}

The proof is nearly identical to the proof of Theorem~5 appearing above in Section~\ref{proof:rate.b}.  However, it remains to show concentration at $\theta^\star_n$ with respect to $P\ell_\theta^n - P\ell_{\theta_n^\star}^n$ at rate $\eps_n$ implies concentration at $\theta^\star$ with respect to $R(\theta) - R(\theta^\star)$ at rate $\eps_n \vee B_n t_n^{1-r}$.

By assumption $P|\ell_\theta|^s<B_n$ for some $s>1$ and $0<B_n<\infty$ for all $\theta\in\Theta_n$ so that
\begin{align*}
P\ell_\theta &= P\ell_\theta\cdot 1(\ell_\theta \leq t_n) + P\ell_\theta\cdot 1(\ell_\theta > t_n) \\
    &\leq P\ell_\theta\cdot 1(\ell_\theta \leq t_n) +\int_{t_n}^\infty \frac{B_n}{x^s} \, dx\\
    & \leq P\ell_\theta\cdot 1(\ell_\theta \leq t_n) + B_n t_n^{1-s},
\end{align*}
by Markov's inequality.  By definition of $\ell_\theta^n$
\[ P\ell_\theta^n = P\ell_\theta 1(\ell_\theta \leq t_n)  + t_n \, P(\ell_\theta > t_n), \]
and, therefore, 
\begin{align*}
P\ell_\theta - P\ell_\theta^n &\leq  B_n t_n^{1-s} - t_n \, P(\ell_\theta > t_n) \leq B_n t_n^{1-s}.
\end{align*}
Similarly, 
\begin{align*}
    P\ell_\theta^n - P\ell_\theta & = P\{(t_n - \ell_\theta)\cdot 1(\ell_\theta > t_n)\}\\
    & \leq B_n t_n^{1-s} - P\ell_\theta\cdot 1(\ell_\theta > t_n) \\
    & \leq B_n t_n^{1-s},
\end{align*}
so we can conclude
\begin{equation}
    \label{eq:ineq1}
|P\ell_\theta - P\ell_\theta^n| \leq B_n t_n^{1-s}, \quad \theta\in \Theta_n.
\end{equation}
Next, note that by definition $R(\theta_n^\star) - R(\theta^\star) = P\ell_{\theta_n^\star}-P\ell_{\theta^\star}>0$. Using \eqref{eq:ineq1}, replace the risk by the clipped risk at $\theta^\star$ and at $\theta^\star_n$, up to error of $B_n t_n^{1-s}$ each time to see that
\[ 0<P\ell_{\theta_n^\star}-P\ell_{\theta^\star} < P\ell_{\theta_n^\star}^n-P\ell_{\theta^\star}^n + 2B_n t_n^{1-s},\]
and, since $P\ell_{\theta_n^\star}^n-P\ell_{\theta^\star}^n<0$ by definition, we have
\[ 0<P\ell_{\theta_n^\star}-P\ell_{\theta^\star} < 2B_n t_n^{1-s},\]
and, therefore, 
\begin{equation}
    \label{eq:ineq2}
 |P\ell_{\theta_n^\star}-P\ell_{\theta^\star}|<2B_nt_n^{1-s}.   
\end{equation}

Now, for some constants $C_1, \,C_2>0$ to be determined, define the sets
\begin{align*}
A_n & =\{\theta\in \Theta_n: P\ell_{\theta}^n-P\ell_{\theta_n^\star}^n > C_1B_n t_n^{1-s}\} \\
A_n' & = \{\theta\in \Theta_n: P\ell_{\theta}-P\ell_{\theta^\star} > C_2B_n t_n^{1-s}\}.
\end{align*}
Let $\vartheta\in A_n'$.  By \eqref{eq:ineq1}, swap $P\ell_\vartheta$ for $P\ell_\vartheta^n$ up to an error of $B_nt_n^{1-s}$:
\[\vartheta\in A_n' \implies P\ell_\vartheta^n - P\ell_{\theta^\star} > (C_2 - 1)B_nt_n^{1-s}.\]
Using \eqref{eq:ineq2}, swap $P\ell_{\theta^\star}$ for $P\ell_{\theta^\star_n}$ up to an error of $B_nt_n^{1-s}$:
\[\vartheta\in A_n' \implies P\ell_\vartheta^n - P\ell_{\theta^\star_n} > (C_2 - 2)B_nt_n^{1-s}.\]
Finally, use \eqref{eq:ineq1} again and swap $P\ell_{\theta^\star_n}$ for $P\ell_{\theta^\star_n}^n$ up to an error of $B_nt_n^{1-s}$:
\[\vartheta\in A_n' \implies P\ell_\vartheta^n - P\ell_{\theta_n^\star}^n > (C_2 - 3)B_nt_n^{1-s}.\]
Conclude that if we choose $C_1, \,C_2$ such that $0< C_1 < C_2-3$, then we get $A_n'\subset A_n$.  Consequently, if the Gibbs posterior vanishes on $A_n$ in $P^n$-expectation, then it also vanishes on $A_n'$.

\section{A strategy for checking \eqref{eq:post.b}}
\label{proof:prop:convex_cons}

Recall, a condition for posterior concentration under Theorems~5--6 is
\[
    P^n\Pi_n(\Theta_n^\comp)\rightarrow 0\quad as\,n\rightarrow\infty.
\]
Below we describe a strategy for checking this, based on convexity of $\ell_\theta$. 

\begin{lem}
\label{prop:convex_cons}
The Gibbs posterior distribution satisfies \eqref{eq:post.b} if the following conditions hold:
\begin{enumerate}
\item $\theta\mapsto \ell_\theta(u)$ is convex, 
\item $\inf_{\{\theta:d(\theta,\theta^\star)>\delta\}} R(\theta) - R(\theta^\star) >0$ for all $\delta>0$, and
\item The prior distribution satisfies (14) in the main article. 
\end{enumerate}
\end{lem}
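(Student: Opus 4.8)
The plan is to write $\Pi_n(\Theta_n^\comp) = N_n(\Theta_n^\comp)/D_n$ and control numerator and denominator as in the proof of Theorem~\ref{thm:rate}, the only new ingredient being that convexity of $\theta \mapsto \ell_\theta(u)$ forces the numerator integrand to be uniformly negligible on $\Theta_n^\comp$, so no sub-exponential-type condition enters there. The denominator is routine: Lemma~\ref{lem:den} together with the prior condition \eqref{eq:prior.rate} gives $D_n > b_n := \tfrac12\Pi(G_n)e^{-2\omega n\eps_n^r} \gtrsim e^{-(C_1+2\omega)n\eps_n^r}$ with $P^n$-probability tending to one (if instead $n\eps_n^r$ stays bounded, the refinement used in the proof of Theorem~\ref{thm:rate.rootn} applies and gives $b_n \gtrsim \eps_n^q$ with probability close to one).

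For the numerator, fix $\delta_0 > 0$ small enough that $\delta_0 < \Delta_n - \|\theta^\star\|$ for all large $n$, and put $\rho_0 := \tfrac12\inf\{R(\theta) - R(\theta^\star): d(\theta,\theta^\star) > \delta_0\} > 0$ by condition~(2). The first step is a uniform law of large numbers on the fixed-radius sphere $\mathcal{S}_0 := \{\theta: \|\theta - \theta^\star\| = \delta_0\}$: because $R_n$ and $R$ are convex (finite averages, resp.\ expectations, of the convex maps $\theta \mapsto \ell_\theta(u)$) and $R_n(\theta) \to R(\theta)$ in $P^n$-probability pointwise by the ordinary weak law, the convexity lemma of \citet{pollard.1993} upgrades this to $\sup_{\theta \in \mathcal{S}_0}|R_n(\theta) - R(\theta)| \to 0$ in probability, and similarly $R_n(\theta^\star) \to R(\theta^\star)$; hence on an event $E_n$ with $P^n(E_n) \to 1$ we have $R_n(\theta) - R_n(\theta^\star) \ge \rho_0$ for all $\theta \in \mathcal{S}_0$. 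The second step is the classical ray argument: for $\theta$ with $t := \|\theta - \theta^\star\| > \delta_0$, the point $\theta_0 := \theta^\star + (\delta_0/t)(\theta - \theta^\star)$ lies on $\mathcal{S}_0$ and on the segment $[\theta^\star,\theta]$, so convexity of $R_n$ gives $R_n(\theta_0) - R_n(\theta^\star) \le (\delta_0/t)\{R_n(\theta) - R_n(\theta^\star)\}$, whence $R_n(\theta) - R_n(\theta^\star) \ge (t/\delta_0)\{R_n(\theta_0) - R_n(\theta^\star)\} \ge (t/\delta_0)\rho_0$ on $E_n$. Since $\theta \in \Theta_n^\comp$ forces $t > \Delta_n - \|\theta^\star\|$, on $E_n$ we obtain $R_n(\theta) - R_n(\theta^\star) \ge c_n := (\Delta_n - \|\theta^\star\|)\rho_0/\delta_0$ uniformly over $\Theta_n^\comp$, and therefore $N_n(\Theta_n^\comp) \le e^{-\omega n c_n}\Pi(\Theta_n^\comp) \le e^{-\omega n c_n}$.

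Combining, $\Pi_n(\Theta_n^\comp) \le 1(E_n^\comp) + 1(D_n \le b_n) + b_n^{-1}e^{-\omega n c_n}$; taking $P^n$-expectations, the first two terms vanish, and the third is at most a constant times $e^{-n\{\omega c_n - (C_1+2\omega)\eps_n^r\}}$ (or $\eps_n^{-q}e^{-\omega n c_n}$), which also vanishes since $c_n$ is bounded away from $0$ — it even diverges when $\Delta_n \to \infty$ — while $\eps_n^r \to 0$; this gives \eqref{eq:post.b}. I expect the main obstacle to be the uniform law of large numbers on $\mathcal{S}_0$: this is precisely what makes convexity indispensable, and it is convexity again, used in the ray argument, that lets us localize to a sphere of fixed radius instead of the growing sphere $\{\|\theta\| = \Delta_n\}$, which would be much harder to treat by empirical-process methods. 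A couple of minor points to verify along the way are the integrability needed for the pointwise weak law and the compatibility of the divergence $d$ with the Euclidean norm defining $\mathcal{S}_0$ and $\Theta_n$, both immediate in the intended quantile-regression application.
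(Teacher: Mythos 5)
Your proof is correct, and while it uses the same basic toolkit as the paper's argument --- Lemma~\ref{lem:den} for the denominator, and Hj\"ort--Pollard's convexity lemma to get a uniform law of large numbers --- it differs structurally in a way worth noting. The paper anchors its convexity argument at the M-estimator $\hat\theta_n$: it first invokes Pollard's Lemmas 1--2 to establish consistency $\hat\theta_n \to \theta^\star$, then argues that on the event $\{d(\hat\theta_n,\theta^\star)\le\eps/2\}$ the segment from $\hat\theta_n$ to any $\theta$ outside the $\eps$-ball crosses the sphere of radius $\eps/2$, which combined with the monotonicity of $R_n$ along rays from its minimizer yields $R_n(\theta)-R_n(\theta^\star)\ge\psi>0$; it then shows $\Pi_n(\{d(\theta,\theta^\star)>\eps\})\to 0$ for fixed $\eps$, implicitly using that $\Theta_n^\comp$ is eventually contained in such a set. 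You instead anchor the ray argument directly at $\theta^\star$: the convexity inequality $R_n(\theta_0)-R_n(\theta^\star)\le(\delta_0/t)\{R_n(\theta)-R_n(\theta^\star)\}$ requires no knowledge of the minimizer's location, so the whole step of first establishing $\hat\theta_n\to\theta^\star$ is dispensed with; your anchor buys you this simplification, plus the stronger, explicit linear growth $R_n(\theta)-R_n(\theta^\star)\gtrsim\|\theta-\theta^\star\|$ which lets you handle $\Theta_n^\comp$ directly rather than through a fixed-radius surrogate. The paper's M-estimator anchor buys nothing extra here, and your version is arguably cleaner. One small technical fudge to tighten: you define $\rho_0$ as an infimum over $\{d(\theta,\theta^\star)>\delta_0\}$ but then invoke it for $\theta_0$ on the sphere $\{\|\theta_0-\theta^\star\|=\delta_0\}$, which lies on the boundary of that set rather than strictly inside; define $\rho_0$ with a slightly smaller radius, or with $\ge\delta_0$, and all is well.
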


\begin{proof}
Let $A := \{\theta: d(\theta, \theta^\star)>\eps\}$ for any fixed $\eps>0$ and write the Gibbs posterior probability of $A$ as
\[\Pi_n(A)=\frac{N_n(A)}{D_n} := \frac{\int_A e^{-n[R_n(\theta) - R_n(\theta^\star)]}\Pi(d\theta)}{\int e^{-n[R_n(\theta) - R_n(\theta^\star)]}\Pi(d\theta)}.\]
Assumption~\ref{prop:convex_cons} implies the minimizer $\hat\theta_n$ of $R_n(\theta)$ converges to $\theta^\star$ in $P^n$-probability; see \citet{pollard.1993}, Lemmas 1--2.  Therefore, assume $d(\hat\theta_n, \theta^\star)\leq \eps/2$ since
\[\{U^n:(\Pi_n(A)> a) \cap (d(\hat\theta_n, \theta^\star)> \eps/2)\} \rightarrow 0\]
in $P^n-$probability. 
By convexity, and the fact that $\hat{\theta}_n \notin A$,
\[R_n(\theta) - R_n(\theta^\star) \geq \inf_u \{R_n(\theta^\star + (\eps/2) u)- R_n(\theta^\star)\},\]
where the infimum is over all unit vectors.  The infimum on the RHS of the above display converges to a positive number, say $\psi>0$, in $P^n-$probability by Lemma~1 in \citet{pollard.1993}.  Therefore, 
\[\lim\inf_{\theta\in A} R_n(\theta) - R_n(\theta^\star) \geq \eta\]
with $P^n-$probability converging to $1$.  Uniform convergence of the empirical risk functions implies
\[ N_n(A) \leq e^{-n\psi}\Pi(A) \]
with $P^n$-probability converging to $1$ as $n \to \infty$.  Combining this upper bound on $N_n(A)$ with the lower bound on $D_n$ provided by Lemma~1 in the main article we have
\[\Pi_n(A)\leq e^{-n(\psi-C_1\eps_n^r)}\rightarrow 0\]
where the bound vanishes because $\psi>C_1\eps_n^r$ for all large enough $n$.  By the bounded convergence theorem, $P^n \Pi_n(A)\rightarrow 0$, as claimed.
\end{proof}

\section{Proofs of posterior concentration results for examples}
\label{proofs:examples}

\subsection{Proof of Proposition~\ref{prop:quantreg}}
\label{proof:quantreg}

The proof proceeds by checking the conditions of the extended version of Theorem~2, that based on Condition~2.  First, we confirm that $R(\theta)$ is minimized at $\theta^\star$.  Write
\begin{align*}
R(\theta) = \int_\XX \Bigl[ & (\tau - 1)  \int_{-\infty}^{\theta^\top f(x)} \{y-\theta^\top f(x)\} \, p_x(y) \, dy  \\
& + \tau \int_{\theta^\top f(x)}^\infty \{y-\theta^\top f(x)\} \, p_x(y) \, dy \Bigr] \, P(dx).
\end{align*}
Assumption~\ref{asmp:quantile_reg}(1--2) implies $R(\theta)$ can be differentiated twice under the integral:
\begin{align*}
\dot{R}(\theta) & = \int f(x) \{P_x(\theta^\top f(x)) - \tau\} \, P(dx) \\
\ddot{R}(\theta) & = \int f(x)f(x)^\top \, p_x(\theta^\top f(x)) \, P(dx),
\end{align*}
where $P_x$ denotes the distribution function corresponding to the density $p_x$.  By definition, $P_x(\theta^{\star \top} f(x)) = \tau$, so it follows that $\dot R(\theta^\star)=0$.  Moreover, the following Taylor approximation holds in the neighborhood $\{\theta:\|\theta-\theta^\star\|<\delta\}$:
\[R(\theta) = \tfrac12 (\theta-\theta^\star)^\top \ddot R(\theta^\star)(\theta-\theta^\star) + o(\|\theta-\theta^\star\|^2),\]
where Assumption~\ref{asmp:quantile_reg}(2). implies $\ddot R(\theta^\star)$ is positive definite.  Then, $R(\theta)$ is convex and minimized at $\theta^\star$.  

Next, note $\ell_\theta(u)$ satisfies a Lipschitz property:
\[\|\ell_\theta - \ell_{\theta'}\|\leq L \|\theta-\theta'\|,\]
where $L = \max\{\tau, 1-\tau\}\|f(x)\|$.  This follows from considering the cases $y<\theta^\top f(x)<\theta'^\top f(x)$, $\theta^\top f(x)<y<\theta'^\top f(x)$, and $\theta^\top f(x)<\theta'^\top f(x)<y$, and the Cauchy--Schwartz inequality.  By Assumption~\ref{asmp:quantile_reg}(1), $L$ is uniformly bounded in $x$.  Then, using the Taylor approximation for $R(\theta)$ above and following the strategy laid out in Section~4.1 of the main article we have
\[Pe^{-\omega(\ell_\theta - \ell_{\theta^\star})}\leq e^{-\omega\|\theta-\theta^\star\|^2(a - \omega L^2/2)}\]
where $2a>0$ is bounded below by the smallest eigenvalue of $\ddot R(\theta^\star)$.  Therefore, Condition~2 holds for all sufficiently small learning rates, i.e., $\omega < 2aL^{-2}$.  

Assumption~\ref{asmp:quantile_reg}(3) says the prior density is bounded away from zero in a neighborhood of $\theta^\star$.  By the above computations, 
\[\{\theta:m(\theta,\theta^\star)\vee v(\theta,\theta^\star)\leq\delta\}\supset \{\theta:\|\theta-\theta^\star\|<C\delta\}\] 
for all small enough $\delta>0$ and some $C>0$.  Therefore,
\[\Pi(\{\theta:m(\theta,\theta^\star)\vee v(\theta,\theta^\star)\leq\delta\})\geq \Pi(\{\theta:\|\theta-\theta^\star\|<C\delta\})\gtrsim \delta^J,\]
verifying the prior condition in (15).

Since $\theta \mapsto \ell_\theta(u)$ is convex and the Taylor approximation for $R(\theta)$ implies that  
\[ \|\theta-\theta^\star\|>\delta \implies  m(\theta,\theta^\star)\gtrsim \delta^2, \]
the conditions of Lemma~\ref{prop:convex_cons} hold.

\subsection{Proof of Proposition~\ref{prop:auc}}
\label{proof:auc}
For $\lambda \in (0,1)$ as in Assumption~\ref{asmp:auc}(1), define 
\[\omega_n = \frac{m+n}{2mn}\Bigl(\frac{\tau_{10}}{\lambda} + \frac{\tau_{01}}{1-\lambda}\Bigr)^{-1},\]
where $\tau_{10}$ and $\tau_{01}$ are not both 0, so that
\[(m+n)\omega_n \to \{2(\lambda \tau_{01} + (1-\lambda)\tau_{10})\}^{-1}.\] 
For any deterministic sequence $a_n$, with $a_n \to \infty$, the learning rate $a_n\omega_n$ vanishes strictly more slowly than $\min(m,n)^{-1}$, and, therefore, according to Theorem~1 in \citet{wang.martin.auc}, the Gibbs posterior with learning rate $a_n \omega_n$ concentrates at rate $n^{-1/2}$ in the sense of Definition~4 in the main article. By the law of large numbers, $\hat\tau_{01}\rightarrow \tau_{01}$ and $\hat\tau_{10}\rightarrow \tau_{10}$ in $P^n$-probability, so 
\[(m+n)\hat\omega_n \rightarrow \{2(\lambda \tau_{01} + (1-\lambda)\tau_{10})\}^{-1} \quad \text{in $P^n$-probability}.\]
Therefore, for any $\alpha \in (1/2,1)$, we have 
\[P(\tfrac{1}{2}a_n\omega_n < \alpha a_n\hat\omega_n < a_n\omega_n) \to 1, \]
and it follows from Theorem~4 that the Gibbs posterior with learning rate $\alpha a_n \hat \omega_n$ also concentrates at rate $n^{-1/2}$.  Finally, since $a_n$ is arbitrary, the constant $\alpha$ is unimportant and may be implicitly absorbed into $a_n$.  Therefore, the conclusion of Proposition~\ref{prop:auc}, omitting $\alpha$, also holds.

\subsection{Proof of Proposition~\ref{prop:light}}
\label{proof:light}

The proof proceeds by applying Lemma~\ref{prop:convex_cons} and then checking the conditions of Theorem~2.  The squared-error loss is convex in $\theta$ and its corresponding risk equals $P_X(\theta-\theta^\star)^\top XX^\top (\theta-\theta^\star) \gtrsim \|\theta-\theta^\star\|_2^2$ by Assumption 1a.2, which implies condition 2 in Lemma~\ref{prop:convex_cons}.  Further, the condition on the prior in Assumption~1a.3 is sufficient for verifying condition 3 in Lemma~\ref{prop:convex_cons}.  Then, Lemma~\ref{prop:convex_cons} implies $\Pi_n(\{\theta:\|\theta-\theta^\star\|_2>\delta\})$ vanishes in $P^n-$expectation for any $\delta>0$.

Next, verify the conditions of Theorem~2. The excess loss can be written $\ell_\theta(u) - \ell_{\theta^\star}(u) = (\theta^\star - \theta)^\top x \, \{2y - (\theta+\theta^\star)^\top x\}$.  To verify Condition~1, we start by computing the conditional expectation of $e^{-\omega(\ell_\theta - \ell_{\theta^\star})}$, given $X=x$, using the fact that $P_{Y|x} Y = x^\top \theta^\star$.  Lemma~\ref{prop:convex_cons} implies we may restrict our attention to bounded $\theta$ so that Assumption~1a.2. implies $|\theta^\top X|<B$.  Therefore, we take $\omega < (4Bb)^{-1}$ where $b$ is given in Assumption~1a.1.  Specifically, $Y$ given $x$ is subexponential with parameters $(\sigma^2, \, b)$, so that $Pe^{tY}< \exp\{tx^\top \theta^\star + t^2\sigma^2/2\}$ for all $t<(2b)^{-1}$. In the excess loss, add and subtract $2\omega(\theta^\star - \theta)^\top x$ times $x^\top \theta^\star$ and apply Assumption~1a.1 to get the following bounds:
\begin{align*}
    Pe^{-\omega(\ell_\theta - \ell_{\theta^\star})} &= Pe^{-\omega(\theta^\star - \theta)^\top X \{2Y - (\theta+\theta^\star)^\top X\}}\\
    &\leq P_X P_{Y|X}e^{\omega(\theta^\star - \theta)^\top X(\theta+\theta^\star)^\top X}e^{-2\omega(\theta^\star)^\top X (\theta^\star - \theta)^\top X}e^{-2\omega(\theta^\star - \theta)^\top X(Y-X^\top \theta^\star) }\\
    &\leq P_X e^{\omega(\theta^\star - \theta)^\top X(\theta+\theta^\star)^\top X}e^{-2\omega(\theta^\star)^\top X (\theta^\star - \theta)^\top X}e^{2\omega^2\sigma^2[(\theta^\star - \theta)^\top X]^2}\\
    &\leq P_X e^{-\omega (1-2\omega\sigma^2) \{(\theta^\star - \theta)^\top X\}^2}.
\end{align*}
Apply (16) in the paper, which is Lemma~7.26 in \citet{wasserman}, using the facts from Assumption~1a.2 that $(\theta^\star - \theta)^\top (PXX^T) (\theta^\star - \theta) \geq c \|\theta-\theta^\star\|_2^2$ and from consistency that $|\theta^\top X|<B$ so that $\{(\theta^\star - \theta)^\top X\}^2 \leq 4B^2$. Then, (16) implies
\begin{align*}
    Pe^{-\omega(\ell_\theta - \ell_{\theta^\star})} &\leq P_X e^{-\omega (1-2\omega\sigma^2) \{(\theta^\star - \theta)^\top X\}^2} \\
    &\leq e^{-\omega (1-2\omega\sigma^2)P_X\{(\theta^\star - \theta)^\top X\}^2}e^{H\omega^2 (1-2\omega\sigma^2)^2P_X\{(\theta^\star - \theta)^\top X\}^4}\\
    &\leq e^{-\omega (1-2\omega\sigma^2)\|\theta - \theta^\star\|_2^2\{c - 4B^2CH\omega(1-2\omega\sigma^2)\}}\\
    &\leq e^{-c_2\omega \|\theta^\star - \theta\|^2_2}
\end{align*}
where 
\[ H = \{\omega^2(1-2\sigma^2)^216B^4\}^{-1}\bigl\{ e^{\omega(1-2\omega\sigma^2)4B^2}-1-\omega(1-2\omega\sigma^2)4B^2 \bigr\}, \]
and where the last line holds for some $c_2>0$ and for all sufficiently small $\omega>0$.  In particular, $\omega$ must satisfy $c > 4B^2CH\omega(1-2\omega\sigma^2)$.

To verify the prior condition in (13) of the paper, we need upper bounds on $m(\theta,\theta^\star)$ and $v(\theta, \theta^\star)$.  From above, we have $m(\theta,\theta^\star)=(\theta^\star - \theta)^\top (PXX^T) (\theta^\star - \theta)$ and by Assumption~1a.2 it follows $m(\theta,\theta^\star)\leq C\|\theta - \theta^\star\|_2^2$ for some $C>0$.  To bound $v(\theta,\theta^\star)$, use the  total variance formula ``$V(X) = E\{V(X \mid Y)\} + V\{E(X \mid Y)\}$,'' noting $V(Y \mid X) \leq \sigma^2$ by Assumption~1a.1. Then,
\begin{align*}
    v(\theta, \theta^\star)&\leq \sigma^2 (\theta^\star - \theta)^\top (PXX^T) (\theta^\star - \theta) + V\{(\theta^\star - \theta)^\top XX^T (\theta^\star - \theta)\}\\
    &\leq C\sigma^2\|\theta - \theta^\star\|_2^2 + P\{(\theta^\star - \theta)^\top XX^T (\theta^\star - \theta)\}^2\\
    & \leq C\sigma^2\|\theta - \theta^\star\|_2^2 + 4B^2P\{(\theta^\star - \theta)^\top XX^T (\theta^\star - \theta)\}\\
    &\leq (C\sigma^2+4CB^2)\|\theta - \theta^\star\|_2^2.
\end{align*}
Then, by Cauchy--Schwartz,
\[\{\theta:m(\theta,\theta^\star) \vee v(\theta,\theta^\star) \leq \delta\} \supset \{\theta: \|\theta-\theta^\star\|\leq C_1^{-1/2}\delta^{1/2}\}\]
where $C_1 = \max\{C,C\sigma^2+4CB^2\}$.  Assumption~1a.3 says the prior density is bounded away from zero in a neighborhood of $\theta^\star$, so that
\[\Pi(\{\theta:m(\theta,\theta^\star) \vee v(\theta,\theta^\star) \leq \delta\})\geq \Pi(\{\theta: \|\theta-\theta^\star\|\leq C^{-1/2}\delta^{1/2}\})\gtrsim \delta^{J/2},\]
which verifies (13).

\subsection{Proof of Proposition~\ref{prop:heavy}}
\label{proof:heavy}

The proof proceeds by checking the conditions of Theorem~6.

First, note that $\theta\in\Theta_n$ along with Assumption~1b together imply $P|\ell_\theta|^{s/2} < C\Delta_n^s$ which we define to be $B_n$ for some $C>0$ and where $s>2$.  This fulfills the condition in Theorem~6 that 
\begin{equation}
    \label{eq:moment_bnd}
    P|\ell_\theta|^{s/2} < B_n < \infty,\quad\forall\theta\in\Theta_n,\quad\exists s>2.
\end{equation}

Second, we verify Condition~3.  We apply the strategy sketched in Section~4.2 of the paper: verify the Bernstein condition with $\alpha = 1$ for the clipped loss function, and then apply the inequality in (16) of the paper.  For the Bernstein condition we want to show
\begin{equation}
\label{eq:heavy_bern}
    \theta\in\Theta_n, \,m_n(\theta, \theta_n^\star) > B_nt_n^{1-s/2} \Rightarrow v_n(\theta, \theta^\star_n)\leq G(\Delta_n)m_n(\theta, \theta^\star_n)
\end{equation}
for some function $G(\cdot)$ to be specified.  By construction, the second moment of excess clipped loss satisfies $P[\{\ell_\theta^n-\ell_{\theta_n^\star}^n\}^2]\leq P[\{\ell_\theta-\ell_{\theta_n^\star}\}^2]$, which implies
\begin{align*}
    v_n(\theta,\theta_n^\star)&\leq v(\theta,\theta_n^\star)+m(\theta,\theta_n^\star)^2-m_n(\theta,\theta_n^\star)^2\\
    &\leq v(\theta,\theta_n^\star)+m(\theta,\theta_n^\star)^2.
\end{align*}
So, we want to upper bound $v(\theta,\theta_n^\star)$ and $ m(\theta,\theta_n^\star)^2$, to get a bound on $v_n(\theta,\theta_n^\star)$, and then relate this bound to $m_n(\theta, \theta_n^\star)$ to determine $G(\cdot)$.

Similarly to the proof of Proposition~\ref{prop:light} we use the total variance formula to upper bound $v(\theta,\theta_n^\star)$.  For the ``$E(V(X|Y))$" part of the formula, we have
\begin{align*}
    E(V(\ell_\theta - \ell_{\theta^\star_n}|X=x))& = P\{4\sigma^2_X(\theta^\star_n - \theta)^\top XX^\top(\theta^\star_n - \theta)\}\\
    &\lesssim \|\theta^\star_n - \theta\|_2^2P\sigma_x^2\\
    &\lesssim \|\theta^\star_n - \theta\|_2^2\\
    &\leq \|\theta - \theta^\star\|_2^2+\|\theta^\star_n - \theta^\star\|_2^2
\end{align*}
where the second line follows from the fact $\|X\|_\infty$ is bounded almost surely according to Assumption~1a and where $\sigma_x^2$ denotes the marginal variance of $Y$, given $X=x$, which has finite expectation according to Assumption~1b.  

Next, for the ``$V(E(X|Y))$" part we have 
\begin{align*}
    V(E(\ell_\theta - \ell_{\theta^\star_n}|X))& = V\{2\theta^{\star^\top} XX^\top(\theta^\star_n - \theta) + (\theta - \theta^\star_n)^\top XX^\top (\theta+\theta_n^\star)\}\\
    &\leq E\{2\theta^{\star^\top} XX^\top(\theta_n^\star - \theta)\}^2\\
    & + 2E\{[2\theta^{\star^\top} XX^\top(\theta^\star_n-\theta)][(\theta_n^\star-\theta)^\top XX^\top(\theta_n^\star+\theta)]\}\\
    & + E\{(\theta_n^\star-\theta)^{\top}XX^\top(\theta_n^\star+\theta)\}^2.\\
    &\lesssim \Delta_n^2\|\theta - \theta_n^\star\|_2^2\\
    & \leq \Delta_n^2\left [\|\theta - \theta^\star\|_2^2+\|\theta_n^\star - \theta^\star\|_2^2\right],
\end{align*}
again, using the facts $\|X\|_\infty$ is bounded almost surely and that $\|\theta\|_2 < \Delta_n$ for $\theta\in\Theta_n$.

Similarly,
\begin{align*}
    m(\theta, \theta^\star_n)^2& = [P\{2\theta^{\star^\top} XX^\top(\theta^\star_n - \theta) + (\theta - \theta^\star_n)^\top XX^\top (\theta+\theta_n^\star)\}]^2\\
    &\lesssim \{\|\theta - \theta_n^\star\|_2+\Delta_n\|\theta - \theta_n^\star\|_2\}^2\\
    & \leq \Delta_n^2\{\|\theta - \theta^\star\|_2^2+\|\theta_n^\star - \theta^\star\|_2^2\}.
\end{align*}

So far, we have showed
\[v_n(\theta, \theta^\star_n) \lesssim  \Delta_n^2\left[\|\theta- \theta^\star\|_2^2 + \|\theta_n^\star - \theta^\star\|_2^2\right].\]
And, since
\[\|\theta - \theta^\star\|_2^2 \lesssim m(\theta, \theta^\star) \lesssim \|\theta - \theta^\star\|_2^2\]
this implies
\[v_n(\theta, \theta^\star_n) \lesssim \Delta_n^2\left[m(\theta, \theta^\star) + m(\theta_n^\star, \theta^\star)\right].\]
Finally, we note that the proof of Theorem~6 above shows that \eqref{eq:moment_bnd} implies
\[|m(\theta, \theta^\star) - m_n(\theta, \theta^\star_n)| \leq 2B_nt_n^{1-s/2} \quad\text{and}\quad m(\theta_n^\star, \theta^\star) < 2B_nt_n^{1-s/2}.\]
Conclude
\[v_n(\theta, \theta^\star_n) \lesssim \Delta_n^2\left[m_n(\theta, \theta^\star_n) + 4B_nt_n^{1-s/2}\right].\]
Therefore, \eqref{eq:heavy_bern} is verified if we take $G(\Delta_n) = C\Delta_n^2$ for some $C>0$.

Next, apply the inequality in (16).  Note that $L_n:=\sup_{u, \theta\in\Theta_n}|\ell_\theta^n(u) - \ell_{\theta_n^\star}(u)| \lesssim \Delta_n t_n^{1/2}$ and recall that we choose $\omega_n = (\Delta_n t_n^{1/2})^{-1}$.  Then, (16) and \eqref{eq:heavy_bern} imply
\begin{align*}
   m_n(\theta, \theta_n^\star) > B_nt_n^{1-s/2} \implies &Pe^{-\omega_n (\ell_\theta^n - \ell_{\theta_n^\star}^n)}\leq e^{-\omega_n m_n(\theta, \theta_n^\star) + \omega_n^2 v_n(\theta, \theta_n^\star)\left\{\frac{e^{L_n\omega_n} - 1 - L_n\omega_n}{(L_n\omega_n)^2}\right\}}\\
   &\leq e^{-\omega_n m_n(\theta, \theta_n^\star) + c_n\omega_n^2 v_n(\theta, \theta_n^\star)}\\
   &\leq e^{-\omega_n m_n(\theta, \theta_n^\star) + c_n'\omega_n^2\Delta_n^2 [m_n(\theta, \theta_n^\star) + 4B_nt_n^{1-s/2}]}\\
   &\leq  e^{-\omega_n m_n(\theta, \theta_n^\star)[1-c_n'\omega_n\Delta_n^2-4c_n'\omega_n\Delta_n^2B_nt_n^{1-s/2}m_n(\theta, \theta^\star_n)^{-1}]} \\
   &\leq e^{-K_n\omega_n \eps_n^2}\quad\text{for }m_n(\theta, \theta^\star_n)>\eps_n^2:=B_nt_n^{1-s/2},
\end{align*}
where $c_n,c_n' = O(1)$ by the choice of $\omega_n$, and where $K_n = 1-5c_n'\omega_n\Delta_n^2 >1/2$ for all large enough $n$.  This verifies Condition~3.

Finally, we verify the prior bound in (26).  By Assumption~\ref{asmp:light}(3) the prior is bounded away from zero in a neighborhood of $\theta^\star$, which implies, by the above bound on $\|\theta_n^\star - \theta^\star\|_2$, that the prior is also bounded away from zero in a neighborhood of $\theta_n^\star$ for all sufficiently large $n$.  Using the above bounds on the $m_n$ and $v_n$ functions
\begin{align*}
\Pi(\{\theta:m_n(\theta,\theta_n^\star)\vee v_n(\theta, \theta_n^\star) < K_n\eps_n^2\})&\geq \Pi(\{\theta:\|\theta-\theta^\star_n\|_2^2 < \Delta_n^{-2}B_nt_n^{1-s/2}\})\\
&\gtrsim (\Delta_n^{-2}B_nt_n^{1-s/2})^J \\
& \gtrsim e^{-C\log n}\quad \exists C>0,\\
&> e^{-K_nn\omega_nB_nt_n^{1-s/2}} = e^{-K_n\Delta_n^{s-1}}
\end{align*}
where the last inequality holds because $\Delta_n = \log n$ and $s>2$.  This verifies the prior bound required by Theorem~6 in (26).

\subsection{Proof of Proposition~\ref{prop:pred_sq}}
\label{proof:pred_sq}

Proposition~\ref{prop:pred_sq} follows from a slight refinement of Theorem~3.  We verify Condition~1 and a prior bound essentially equivalent to (14).  We also use an argument similar to the proof of Theorem~2 to improve the bound on expectation of the numerator of the Gibbs posterior distribution derived in the proof of Theorem~3.   

Towards verifying Condition~1, we first need to define the loss function being used.  Even though the $x_i$ values are technically not ``data'' in this inid setting, it is convenient to express the loss function in terms of the $(x,y)$ pairs.  Moreover, while the quantity of interest is the mean function $\theta$, since we have introduced the parametric representation $\theta=\theta_\beta$ and the focus shifts to the basis coefficients in the $\beta$ vector, it makes sense to express the loss function in terms of $\beta$ instead of $\theta$.  That is, the squared error loss is 
\[ \ell_\beta(x,y) = \{y - \theta_\beta(x)\}^2. \]
For $\beta^\dagger=\beta_n^\dagger$ as defined in Section~5.4, the loss difference equals
\begin{equation}
    \label{ldiff:sqrt}
    \ell_\beta(x,y) - \ell_{\beta^\dagger}(x,y) = \{\theta_\beta(x) - \theta_{\beta^\dagger}(x)\}^2 + 2\{\theta_{\beta^\dagger}(x) - \theta_\beta(x)\}\{y - \theta_{\beta^\dagger}(x)\}. 
\end{equation} 
Since the responses are independent, the expectation in Condition~1 can be expressed as the product
\begin{align*}
P^n e^{-n\omega\{r_n(\beta)-r_n(\beta^\dagger)\}} &= \prod_{i=1}^n e^{-\omega\{ \theta_\beta(x_i) - \theta_{\beta^\dagger}(x_i)\}^2} P_i e^{-2\omega\{\theta_{\beta^\dagger}(x_i) - \theta_\beta(x_i)\}\{Y_i - \theta_{\beta^\dagger}(x_i)\}} \\
&=e^{-n\|\theta_\beta - \theta_{\beta^\dagger}\|_{n,2}^2} \prod_{i=1}^n P_i e^{-2\omega \{\theta_{\beta^\dagger}(x_i) - \theta_\beta(x_i)\}\{Y_i - \theta_{\beta^\dagger}(x_i)\}}.
\end{align*}
According to Assumption~\ref{asmp:pred_sq}(2), $Y_i$ is sub-Gaussian, so the product in the last line above can be upper-bounded by
\[ e^{4n\omega^2\sigma^2\|\theta_\beta - \theta_{\beta^\dagger}\|_{n,2}^2} \times e^{-2\omega_n \sum_{i=1}^n \{ \theta_{\beta^\dagger}(x_i) - \theta_\beta(x_i)\}\{\theta^\star(x_i)-\theta_{\beta^\dagger}(x_i)\}}. \]
The second exponential term above is identically $1$ because the exponent vanishes---a consequence of the Pythagorean theorem.  To see this, first write the quantity in the exponent as an inner product
\[ (\beta-\beta^\dagger)^\top F_n^\top \{\theta^\star(x_{1:n}) - F_n \beta^\dagger\} = (\beta-\beta^\dagger)^\top \{ F_n^\top \theta^\star(x_{1:n}) - F_n^\top F_n \beta^\dagger\}. \]
Recall from the discussion in Section~5.4 that $\beta^\dagger$ satisfies $(F_n^\top F_n) \beta^\dagger = F_n^\top \theta^\star(x_{1:n})$; from this, it follows that the above display vanishes for all vectors $\beta$.  Therefore, 
\[ P^n e^{-n\omega\{r_n(\beta)-r_n(\beta^\dagger)\}} \leq e^{-n\omega\|\theta_\beta - \theta_{\beta^\dagger}\|_{n,2}^2 (1-2\omega\sigma^2)}, \]
and Condition~1 is satisfied since provided the learning rate $\omega$ is less than $(2\sigma^2)^{-1}$.

Next, we derive a prior bound similar to (14). By Assumption~\ref{asmp:pred_sq}(3), all eigenvalues of $F_n^\top F_n$ are bounded away from zero and infinity, which implies
\begin{equation}
\label{eq:rm.sq.2}
    \|\beta-\beta^\dagger\|_2^2 \lesssim \|\theta_{\beta}-\theta_{\beta^\dagger}\|_{n,2}^2\lesssim \|\beta-\beta^\dagger\|_2^2.
\end{equation}
In the arguments that follow, we show that $\|\theta_{\beta^\star}-\theta_{\beta^\dagger}\|_{n,2} \lesssim \eps_n$ which implies, by \eqref{eq:rm.sq.2}, that $\|\beta^\star_n-\beta^\dagger\|_2^2 \lesssim \eps_n^2$.  The approximation property in (43) implies that $\|\beta_n^\star\|_\infty < H$.  Therefore, $\|\beta^\dagger\|_\infty$ is bounded because, if it were not bounded, then \eqref{eq:rm.sq.2} would be contradicted.

Since $\|\beta^\dagger\|_\infty$ is bounded, Assumption~\ref{asmp:pred_sq}(5) implies that the prior for $\beta$ satisfies 
\begin{equation}
\label{eq:rm.sq.1}
\Pitilde(\{\beta:\|\beta-\beta^\dagger\|_2\leq \eps_n\}) \gtrsim \eps_n^{J_n}e^{-J_n\log C}. 
\end{equation}
Recall that (14) involves the mean and variance of the empirical risk, and we can directly calculate these.  For the mean, 
\begin{align*}
m(\theta_\beta, \theta_{\beta^\dagger}) & = \bar r_n(\beta) - \bar r_n(\beta^\dagger) \\
& = \|\theta_\beta - \theta_{\beta^\dagger}\|_{n,2}^2 + \sum_{i=1}^n \{\theta_{\beta^\dagger}(x_i) - \theta^\star(x_i)\}\{\theta_{\beta^\dagger}(x_i) - \theta_\beta(x_i)\} \\
& = \|\theta_\beta - \theta_{\beta^\dagger}\|_{n,2}^2,
\end{align*}
where the last equality is by the same Pythagorean theorem argument presented above.  Similarly, the variance is given by $v(\theta_\beta, \theta_{\beta^\dagger}) = 4\sigma^2 n^{-1} \|\theta_\beta - \theta_{\beta^\dagger}\|_{n,2}^2$.  Therefore, 
\[ \{ m(\theta_\beta, \theta_{\beta^\dagger}) \vee v(\theta_\beta, \theta_{\beta^\dagger}) \} \lesssim \|\theta_\beta - \theta_{\beta^\dagger}\|_{n,2}^2, \]
and \eqref{eq:rm.sq.1} and \eqref{eq:rm.sq.2} imply
\begin{align*}
\tilde\Pi(\{\beta:m(\theta_\beta, \theta_{\beta^\dagger}) \vee v(\theta_\beta, \theta_{\beta^\dagger})\leq \eps_n^2\})&\gtrsim \eps_n^{J_n}e^{-CJ_n}.
\end{align*}
The fact that $n\eps_n^2 = J_n$ along with Lemma~\ref{lem:den} implies 
\[D_n \gtrsim \eps_n^{J_n}e^{-(2\omega+\log C)J_n}\]
with $P^n$-probability converging to $1$ as $n\rightarrow\infty$.

We briefly verify the claim made just before the statement of Proposition~2, i.e., that (44) holds for a independence prior consisting of strictly positive densities.  To see this, note that the volume of a $J_n$-dimensional sphere with radius $\eps$ is a constant multiple of $\eps^{J_n}$, so that, if the joint prior density is bounded away from zero by $C^{J_n}$ on the sphere, then we have $\Pitilde(\{\beta:\|\beta-\beta'\|_2 \leq \eps\}) \gtrsim (C\eps)^{J_n}$, which is (44).  So we must verify the bound on the prior density.  Suppose $\tilde \Pi$ has a density $\tilde \pi$ equal to the a product of independent prior densities $\tilde \pi_j$, $j = 1,\ldots,J_n$.  Since the cube $\{\beta:\|\beta-\beta'\|_\infty \leq J_n^{1/2}\eps\}$ contains the ball $\{\beta:\|\beta-\beta'\|_2 \leq \eps\}$ by Cauchy--Schwartz, the infimum of the prior density on the ball is no smaller than the infimum of the prior density on the cube.  To bound the prior density on the cube consider any of the $J$ components and note $|\beta_j-\beta'_j|\leq J_n^{1/2}\eps_n$ implies $\beta_j \in [-H-J_n^{1/2}\eps_n,  H+J_n^{1/2}\eps_n]$ since $\|\beta'\|_\infty \leq H$.  Moreover, since $\alpha\geq 1/2$ we have $J_n^{1/2}\eps_n\rightarrow 0$ so that this interval lies within a compact set, say, $[-H-1, H+1]$.  And, since the prior density $\tilde \pi_j$ is strictly positive, it is bounded away from zero by a constant $C$ on this compact set.  Finally, by independence, we have $\pi(\beta) \geq C^J$ for all $\beta\in \{\beta:\|\beta-\beta'\|_2 \leq \eps\}$, which verifies the claim concerning (44).    

In order to obtain the optimal rate of $\eps_n = n^{-\alpha/(1+2\alpha)}$, without an extra logarithmic term, we need a slightly better bound on $P^n N_n(A_n)$ than that used to prove Theorem~3.  Our strategy, as in the proof of Theorem~2, will be to split the range of integration in the numerator into countably many disjoint pieces, and use bounds on the prior probability on those pieces to improve the numerator bound.  Define $\tilde{A}_n:=\{\beta: \|\theta_\beta - \theta_{\beta^\dagger}\|_{n,2}>M_n\eps_n\}$ and write the numerator $N_n(\tilde A_n)$ as follows 
\begin{align*}
N_n(\tilde A_n) & = \int_{\|\theta_\beta - \theta_{\beta^\dagger}\|_{n,2} > M_n \eps_n} e^{-n\omega \{r_n(\beta) - r_n(\beta^\dagger)\}} \, \tilde\Pi(d\beta) \\
& = \sum_{t=1}^\infty \int_{tM_n \eps_n < \|\theta_\beta - \theta_{\beta^\dagger}\|_{n,2} < (t+1)M_n \eps_n} e^{-n\omega \{r_n(\beta) - r_n(\beta^\dagger)\}} \, \tilde\Pi(d\beta).  
\end{align*}
Taking expectation of the left-hand side and moving it under the sum and under the integral on the right-hand side, we need to bound 
\[ \int_{tM_n \eps_n < \|\theta_\beta - \theta_{\beta^\dagger}\|_{n,2} < (t+1)M_n \eps_n} P^n e^{-n\omega \{r_n(\beta) - r_n(\beta^\dagger)\}}  \, \tilde\Pi(d\beta), \quad t=1,2,\ldots. \]
By Condition~1, verified above, on the given range of integration the integrand is bounded above by $e^{-n\omega (t M_n \eps_n)^2/2}$, so the expectation of the integral itself is bounded by 
\[ e^{-n\omega_n (t M_n \eps_n)^2/2} \Pi(\{\beta: \|\theta_\beta - \theta_{\beta^\dagger}\|_{n,2} < (t+1) M_n \eps_n\}), \quad t=1,2,\ldots \]
Since $\tilde\Pi$ has a bounded density on the $J_n-$dimensional parameter space, we clearly have
\[ \Pi(\{\beta: \|\theta_\beta - \theta_{\beta^\dagger}\|_{n,2} < (t+1) M_n \eps_n\}) \lesssim \{(t+1)M_n\eps_n\}^{J_n}. \]
Plug all this back into the summation above to get 
\[ P^n N_n(\tilde A_n) \lesssim (M_n \eps_n)^{J_n} \sum_{t=1}^\infty (t+1)^{J_n} e^{-\omega (t M_n)^2J_n/2}. \]
The above sum is finite for all $n$ and bounded by a multiple of $e^{-\omega M_n^2J_n/4}$.  Consequently, we find that the expectation of the Gibbs posterior numerator is bounded by a constant multiple of $(M_n\eps_n)^{J_n}e^{-\omega M_n^2J_n/4}$. 

Combining the in-expectation and in-probability bounds on $N_n(\tilde A_n)$ and $D_n$, respectively, as in the proof of Theorem~1, we find that
\[P^n\Pi_n(\tilde A_n)\lesssim e^{-J_n( \omega M_n^2/4 - \log M_n - 2\omega - \log C)}\]
which vanishes as $n\rightarrow\infty$ for any sufficiently large constant $M_n\equiv M>0$.

The above arguments establish that the Gibbs posterior $\Pitilde_n$ for $\beta$ satisfies  
\begin{equation}
    \label{set1}
P^n \Pitilde_n(\{\beta \in \RR^J: \|\theta_\beta - \theta_{\beta^\dagger}\|_{n,2} > M_n\eps_n\}) \to 0.
\end{equation}  
But this is equivalent to the proposition's claim, with $\theta_{\beta^\dagger}$ replaced by $\theta^\star$.  To see this, first recall that Assumption~2.4 implies the existence of a $J$-vector $\beta^\star=\beta_n^\star$ such that $\|\theta_{\beta^\star} - \theta^\star\|_\infty$ is  small.  Next, use the triangle inequality to get 
\[ \|\theta_\beta - \theta^\star\|_{n,2} \leq \|\theta_\beta - \theta_{\beta^\dagger}\|_{n,2} + \|\theta_{\beta^\dagger} - \theta^\star\|_{n,2}. \]
Now apply the Pythagorean theorem argument as above to show that 
\[ \|\theta_{\beta^\star} - \theta^\star\|_{n,2}^2 = \|\theta_{\beta^\star} - \theta_{\beta^\dagger}\|_{n,2}^2 + \|\theta_{\beta^\dagger} - \theta^\star\|_{n,2}^2. \]
Since the sup-norm dominates the empirical $L_2$ norm, the left-hand side is bounded by $CJ^{-2\alpha}$ for some $C > 0$.  But both terms on the right-hand side are non-negative, so it must be that the right-most term is also bounded by $CJ^{-2\alpha}$.  Putting these together, we find that
\[ \|\theta_\beta - \theta^\star\|_{n,2} > M_n'\eps_n \implies \|\theta_\beta - \theta_{\beta^\dagger}\|_{n,2} > M_n'\eps_n - C^{1/2} J^{-\alpha}. \]
Therefore, with $\eps_n=n^{-\alpha/(2\alpha + 1)} \log n$ and $J=J_n=n^{1/(2\alpha+1)}$, the lower bound on the right-hand side of the previous display is a constant multiple of $\eps_n$.  We can choose $M_n'$ such that the aforementioned sequence is at least as big as $M_n$ above.  In the end, 
\[ \Pitilde_n(\{\beta: \|\theta_\beta - \theta^\star\|_{n,2} > M_n'\eps_n\}) \leq \Pitilde_n(\{\beta: \|\theta_\beta - \theta_{\beta^\dagger}\|_{n,2} > M_n \eps_n\}), \]
so the Gibbs posterior concentration claim in the proposition follows from that established above.  Finally, by definition of the prior and Gibbs posterior for $\theta$, we have that 
\[ P^n \Pi_n(\{\theta \in \Theta: \|\theta-\theta^\star\|_{n,2} > M_n'\eps\}) \to 0, \]
which completes the proof.

\subsection{Proof of Proposition~\ref{prop:massart}}

The proof proceeds by checking the conditions of Theorem~1.  We begin by verifying (12). Evaluate $m(\theta,\theta^\star)$ and $v(\theta,\theta^\star)$ for the loss function $\ell_\theta$ as defined above:
\begin{align*}
m(\theta;\theta^\star) & = P\{Y \neq \phi_\theta(X)\} - P\{Y \neq \phi_{\theta^\star}(X)\} \\
& = \int_{x^\top \theta < 0, x^\top \theta^\star > 0} (2\eta - 1) \, dP + \int_{x^\top \theta > 0, x^\top \theta^\star < 0} (1-2\eta) \, dP \\ 
v(\theta,\theta^\star) & \leq P(\ell_\theta - \ell_{\theta^\star})^2 \\
& = P(X^\top \theta < 0, X^\top \theta^\star > 0) + P(X^\top \theta > 0, X^\top \theta^\star < 0) \\
& = P(\phi_\theta - \phi_{\theta^\star})^2.
\end{align*}
It follows from arguments in \citet{tsybakov} that, under the margin condition in Assumption~\ref{asmp:massart}(5), with $h \in (0,1)$, we have 
\[  hP(\phi_\theta - \phi_{\theta^\star})^2\leq  m(\theta,\theta^\star). \]
Further, rewrite $m(\theta,\theta^\star)$ as  
\begin{align}
\label{eq.int.sparse}
m(\theta;\theta^\star) & = \int \eta(x) \{\phi_{\theta^\star}(x) - \phi_\theta(x)\} \, P(dx) + \int [1-\eta(x)]\{ \phi_\theta(x) - \phi_{\theta^\star}(x) \} \, P(dx) \nonumber\\ 
& \leq 2 \int |\phi_\theta(x) - \phi_{\theta^\star}(x)| \, P(dx), 
\end{align}
where the latter inequality follows since $\eta < 1$.  Since $\phi_\theta - \phi_{\theta^\star}$ is a difference of indicators
\[P(\phi_\theta - \phi_{\theta^\star})^2=P|\phi_\theta - \phi_{\theta^\star}| \quad \text{and} \quad m(\theta,\theta^\star)\lesssim P(\phi_\theta - \phi_{\theta^\star})^2.\]
This latter inequality will be useful below.  Under the stated assumptions, the integrand in \eqref{eq.int.sparse} can be handled exactly like in Lemma~4 of \citet{jiang.tanner.2008}.  That is, if $\|\beta - \beta^\star\|_1$ is sufficiently small, then $m(\theta; \theta^\star) \lesssim \|\beta - \beta^\star\|_1$.  Since $v(\theta;\theta^\star) \lesssim m(\theta; \theta^\star)$, it follows that 
\[ \{\theta: m(\theta; \theta^\star) \vee v(\theta;\theta^\star) \leq \eps^2\} \supseteq \{\theta = (\alpha,\beta): \|\beta-\beta^\star\|_1 \leq c \eps^2\}, \]
for a constant $c > 0$.  To lower-bound the prior probability of the event on the right-hand side, we follow the proof of Lemma~2 in \citet{castillo.etal.2015}.  First, for $S^\star$ the configuration of $\beta^\star$, we can immediately get 
\[ \Pi(\{\beta: \|\beta-\beta^\star\|_1 \leq c \eps^2\}) \geq \pi(S^\star) \int_{\|\beta_{S^\star} - \beta_{S^\star}^\star\|_1 \leq c \eps^2} g_{S^\star}(\beta_{S^\star}) \, d\beta_{S^\star}. \]
Now make a change of variable $b = \beta_{S^\star}-\beta_{S^\star}^\star$ and note that 
\[ g_{S^\star}(\beta_{S^\star}) = g_{S^\star}(b + \beta_{S^\star}^\star) \geq e^{-\lambda \|\beta^\star\|_1} g_{S^\star}(b). \]
Therefore, 
\[ \Pi(\{\beta: \|\beta-\beta^\star\|_1 \leq c \eps^2\}) \geq \pi(S^\star) e^{-\lambda\|\beta^\star\|_1} \int_{\|b\|_1 \leq c \eps^2} g_{S^\star}(b) \, db, \]
and after plugging in the definition of $\pi(S^\star)$, using the bound in Equation~(6.2) of \citet{castillo.etal.2015}, and simplifying, we get 
\[ \Pi(\{\beta: \|\beta-\beta^\star\|_1 \leq c \eps^2\}) \gtrsim f(|S^\star|) \, q^{-2|S^\star|} e^{-\lambda\|\beta^\star\|_1}. \]
From the form of the complexity prior $f$, the bounds on $\lambda$, and the assumption that $\|\beta^\star\|_\infty=O(1)$, we see that the lower bound is no smaller than $e^{-C |S^\star|\log q}$ for some constant $C > 0$, which implies (12), i.e., 
\[ \Pi(\{\theta: m(\theta; \theta^\star) \vee v(\theta;\theta^\star) \leq \eps_n^2\}) \gtrsim e^{-C n\eps_n^2}, \]
where $\eps_n = \{n^{-1} |S^\star| \log q\}^{1/2}$.

Next, we verify Condition~1.  By direct computation, we get 
\begin{align*}
Pe^{-\omega(\ell_\theta - \ell_{\theta^\star})} &= 1-P(\phi_\theta - \phi_{\theta^\star})^2 + e^{-\omega} \int \eta(x) 1\{x^\top\theta\leq 0, x^\top\theta^\star>0\}\,P(dx) \\
& \qquad + e^{-\omega} \int (1-\eta(x)) 1\{x^\top\theta> 0, x^\top\theta^\star\leq 0\}\,P(dx) \\
& \qquad + e^{\omega} \int (1-\eta(x)) 1\{x^\top\theta\leq 0, x^\top\theta^\star>0\}\,P(dx) \\
& \qquad + e^\omega  \int \eta(x) 1\{x^\top\theta> 0, x^\top\theta^\star\leq 0\}\,P(dx).
\end{align*}  
Using the Massart margin condition, we can upper bound the above by 
\begin{align*}
Pe^{-\omega(\ell_\theta - \ell_{\theta^\star})} &\leq 1-\min\{a,b\}P(\phi_\theta - \phi_{\theta^\star})^2
\end{align*} 
where $a = 1-e^{-\omega}-(\tfrac12 - \tfrac h2)(e^{\omega-e^{-\omega}})$ and $b = 1-e^{\omega}+(\tfrac12 + \tfrac h2)(e^{\omega-e^{-\omega}})$.  For all small enough $\omega$, both $a$ and $b$ are $O(h\omega)$, so for some constants $c,c'>0$,
\begin{align*}
Pe^{-\omega(\ell_\theta - \ell_{\theta^\star})} &\leq 1-ch\omega P(\phi_\theta - \phi_{\theta^\star})^2\\
&\leq  1-c'\omega m(\theta,\theta^\star).
\end{align*}
Then Condition~1 follows from the elementary inequality $1-t\leq e^{-t}$ for $t>0$. 

\subsection{Proof of Proposition~\ref{prop:tsybakov}}
\label{proof:prop:sparse_reg_tsy}

The proof proceeds by checking the conditions of Theorem~3.  We begin by verifying (14).  The upper bounds on $m(\theta,\theta^\star)$ and $v(\theta,\theta^\star)$ given in the proof of Proposition~\ref{prop:massart} are also valid in this setting, which means 
\[\Pi(\{\theta: m(\theta,\theta^\star) \vee v(\theta,\theta^\star) \leq \eps^2\})\geq \Pi(\{\theta: \|\beta-\beta^\star\|_1\leq C_1\eps^2\})\]
for some $C_1>0$.   Further, Assumption~\ref{asmp:tsybakov} implies \[ \Pi(\{\theta: \|\beta-\beta^\star\|_1\leq C_1\eps^2\}) \gtrsim \eps^{2q}=e^{-2q\log\eps}. \]
By definition of $\eps_n$ and $\omega_n$ we have that 
\[ \exp(-2q\log\eps_n) \geq \exp\{-n\omega_n\eps_n^{(2+2\gamma)/\gamma}\}, \]
which, combined with the previous display, verifies (14).    

Next, to verify Condition~1 we note the misclassification error loss function difference $\ell_\theta(u) - \ell_{\theta^\star}(u)$ is bounded in absolute value by $1$, so we can proceed with using the moment-generating function bound from Lemma 7.26 in \citet{wasserman}; see (16).  Proposition~1 in \citet{tsybakov} provides the lower bound on $m(\theta,\theta^\star)$ we need, i.e., Tsybakov proves that 
\[ \text{our Assumption~\ref{asmp:tsybakov}(2)} \implies m(\theta,\theta^\star)\gtrsim d(\theta,\theta^\star)^{(2+2\gamma)/\gamma}. \] With this and the upper bound on $v(\theta,\theta^\star)$ derived above, (16) implies
\[Pe^{-\omega_n (\ell_\theta - \ell_{\theta^\star})} \leq e^{-C_2 \omega_n\eps_n^{(2+2\gamma)/\gamma}}\]
for some $C_2>0$, which verifies Condition~1.

\subsection{Proof of Proposition~\ref{prop:quant_reg}}
\label{proof:quant_reg}
The proof proceeds by checking the conditions of Theorem~5. First, we verify (21).  Starting with $m(\theta,\theta^\star)$, by direct calculation, 
\begin{align*}
m(\theta,\theta^\star) = \frac12 \int_\XX \Bigl\{ & \int \bigl( |\theta(x) \vee \theta^\star(x) - y| - |\theta(x) \wedge \theta^\star(x) - y| \bigr) \, P_{x}(dy) \\ 
& + (1-2\tau)|\theta(x) - \theta^\star(x)| \Bigr\} \, P(dx). 
\end{align*}
Partitioning the range of integration with respect to $y$, for given $x$, into the disjoint intervals $(-\infty, \theta \wedge \theta^\star]$, $(\theta \wedge \theta^\star, \theta \vee \theta^\star)$, and $[\theta \vee \theta^\star, \infty)$, and simplifying, we get 
\begin{equation}
\label{eq:m_theta}
     m(\theta,\theta^\star) = \frac12 \int_\XX \int_{\theta(x) \wedge \theta^\star(x)}^{\theta(x) \vee \theta^\star(x)} \{ \theta(x) \vee \theta^\star(x) - y \} \, P_{x}(dy) \, P(dx).
\end{equation}
It follows immediately that $m(\theta,\theta^\star_n) \lesssim \|\theta-\theta^\star\|_{L_1(P)}$.  Also, since $\theta \mapsto \ell_\theta(x,y)$ clearly satisfies the fixed-$x$ Lipschitz bound 
\begin{equation}
\label{eq:lip.like}
|\ell_\theta(x,y)-\ell_{\theta^\star}(x,y)| \leq |\theta(x) - \theta^\star(x)|, \quad \text{for all $(x,y)$}, 
\end{equation}
we get a similar bound for the variance, i.e., $v(\theta,\theta^\star_n) \leq \|\theta-\theta^\star\|^2_{L_2(P)}$.  
Let $J_n=n^{1/(1+2\alpha)}$ and $\hat\theta_{J, \beta}:=\beta^\top f$, and define a sup-norm ball around $\theta^\star$
\[B_n^\star  = \{(\beta,J): \beta \in \RR^J, J=J_n, \|\theta^\star - \hat\theta_{J, \beta}\|_\infty\leq CJ_n^{-\alpha}\}.\]
By the above upper bounds on $m(\theta,\theta^\star)$ and $v(\theta,\theta^\star)$ in terms of $\|\theta-\theta^\star\|_{L_2(P)}^2$, we have
\[\|\theta^\star - \hat\theta_{J, \beta}\|_\infty\leq CJ_n^{-\alpha} \implies \{m(\theta,\theta^\star)\vee v(\theta,\theta^\star)\}\lesssim J_n^{-2\alpha}.\]
Then, by Assumptions~\ref{asmp:quant_reg}(3-4), and using the same argument as in the proof of Theorem~1 in \citet{shen.ghosal.2015} we have
\begin{align*}
    \Pi^{(n)}(\{m(\theta,\theta^\star)\vee v(\theta,\theta^\star)\}\lesssim J_n^{-2\alpha}) &= \Pi(\{m(\theta,\theta^\star)\vee v(\theta,\theta^\star)\}\lesssim J_n^{-2\alpha})/\Pi(\Theta_n)\\
    &\geq\Pi(\{m(\theta,\theta^\star)\vee v(\theta,\theta^\star)\}\lesssim J_n^{-2\alpha})\\
    &\geq \Pi(B_n^\star) \gtrsim  e^{-C_1J_n\log n},
\end{align*}
for some $C_1>0$. By the definitions of $\eps_n$ and $\omega_n$ in Proposition~\ref{prop:quant_reg} it follows that $J_n^{-2\alpha}\leq\Delta_n^{-2}\eps_n^2$ for $\Delta_n$ as defined in Condition~2. Define $K_n\propto\Delta_n^{-1}$, with precise proportionality determined below.  Then,
\[C_1J_n(\log n)\leq CnK_n^2\omega_n\eps_n^2,\]
for a sufficiently small $C>0$ and all large enough $n$, which verifies the prior condition in (21) with $r=2$.

Next we verify Condition~2. Define the sets $A_n:=\{\theta: \|\theta-\theta^\star\|_{L_2(P)} > M\eps_n\}$ and $\Theta_n:= \{\theta: \|\theta\|_\infty \leq \Delta_n\}$.  Note that Assumption~\ref{asmp:quant_reg}(4) implies that $\Pi_n(A_n \cap \Theta_n^\comp) = 0$, and, therefore,
\[\Pi_n(A_n) = \Pi_n(A_n\cap \Theta_n).\]
The following computations provide a lower bound on $m(\theta,\theta^\star)$ for $\theta\in \Theta_n$.  Partition $\XX$ as $\XX = \XX_1 \cup \XX_2$ where $\XX_1 =\{x:|\theta(x)-\theta^\star(x)|\geq \delta\}$ and $\XX_2 = \XX_1^\comp$ and where $\delta>0$ is as in Assumption~4(2).  Using \eqref{eq:m_theta}, the mean function can be expressed as 
\begin{align*}
    2 m(\theta,\theta^\star) & = \int_{\XX_1} \int_{\theta(x) \wedge \theta^\star(x)}^{\theta(x) \vee \theta^\star(x)} \{ \theta(x) \vee \theta^\star(x) - y \} \, P_{x}(dy) \, P(dx)\\
    & \qquad + \int_{\XX_2} \int_{\theta(x) \wedge \theta^\star(x)}^{\theta(x) \vee \theta^\star(x)} \{ \theta(x) \vee \theta^\star(x) - y \} \, P_{x}(dy) \, P(dx).
\end{align*}
For convenience, refer to the two integrals on the right-hand side of the above display as $I_1$ and $I_2$, respectively.  Using Assumption~\ref{asmp:quant_reg}(2) and replacing the range of integration in the inner integral by a $(\delta/2)$-neighborhood of $\theta^\star(x)$ we can lower bound $I_1$ as 
\begin{align*}
    I_1 & \geq \int_{\XX_1 \cap \{x: \theta^\star(x)>\theta(x)\}}\int_{\theta^\star(x)-\delta}^{\theta^\star(x)-\delta/2} \{\theta^\star(x) - y\} \, P_{x}(dy) \, P(dx) \\
    & \qquad + \int_{\XX_1 \cap \{x: \theta^\star(x)\leq\theta(x)\}}\int_{\theta^\star(x)}^{\theta^\star(x)+\delta/2} \{\theta(x) - y\} \, P_{x}(dy) \, P(dx) \\
    & \geq (\beta \delta^2/4) \, P(\XX_1).
\end{align*}
Next, for $I_2$, we can again use Assumption~\ref{asmp:quant_reg}(2) to get the lower bound
\begin{align*}
    I_2 & \geq \int_{\XX_2}\int_{\theta(x)\wedge \theta^\star(x)}^{\{\theta(x)+\theta^\star(x)\}/2}  \{ \theta(x) \vee \theta^\star(x) - y \} \, P_{x}(dy) \, P(dx)\\
    & \geq \frac{\beta}{2} \int_{\XX_2} |\theta(x)-\theta^\star(x)|^2\, P(dx).
\end{align*}
Similarly, for sufficiently large $n$, if $\theta \in \Theta_n$, then the $L_2(P)$ norm is bounded as
\[\|\theta - \theta^\star\|_{L_2(P)}^2\leq \int_{\XX_2}  |\theta(x)-\theta^\star(x)|^2\, P(dx) + (\Delta_n)^2 \, P(\XX_1).\]
Comparing the lower and upper bounds for $m(\theta,\theta^\star)$ and $\|\theta - \theta^\star\|_{L_2(P)}^2$ in terms of integration over $\XX_1$ and $\XX_2$ we have
\[\int_{\XX_2}  |\theta(x)-\theta^\star(x)|^2\, P(dx)\lesssim I_2,\]
and
\[(\Delta_n)^{-2}\int_{\XX_1}  |\theta(x)-\theta^\star(x)|^2\, P(dx) \lesssim I_1,\]
which together imply
\[ m(\theta,\theta^\star) \gtrsim (\Delta_n)^{-2}\|\theta - \theta^\star\|_{L_2(P)}^2.\]

Recall, from \eqref{eq:lip.like}, that $\theta \mapsto \ell_\theta(x,y)$ is $1-$Lipschitz. Therefore, if $\theta\in \Theta_n$, for large enough $n$, then the loss difference is bounded by $\Delta_n$, so Lemma~7.26 in \citet{wasserman}, along with the lower and upper bounds on $m(\theta,\theta^\star)$ and $v(\theta,\theta^\star)$, can be used to verify Condition~2.  That is, there exists a $K>0$ such that for all $\theta \in \Theta_n$
\begin{align*}
    Pe^{-\omega_n(\ell_\theta -\ell_{\theta^\star})}&\leq \exp\{2\omega_n^2v(\theta,\theta^\star) - K\omega_n m(\theta,\theta^\star)\} \\
    & \leq \exp[-K\omega_n\Delta_n^{-2} \|\theta-\theta^\star\|_{L_2(P)}^2\{1-2\omega_n\Delta_n^2\}]\\
    & \leq \exp(-K_n\omega_n\eps_n^2) 
\end{align*}
where the last inequality holds for $K_n=(K/2)\Delta_n^{-2}$.  Given Assumption~\ref{asmp:quant_reg}(4), the above inequality verifies Condition~2 for $\omega_n$ and $\Delta_n$ as in Proposition~4.

\subsection{Proof of Proposition~\ref{prop:pers_mcid}}
\label{proof:pers_mcid}

Proposition~\ref{prop:pers_mcid} follows from Theorem~1 by verifying (12) and Condition~1.

First, we verify (12).  By definition
\begin{align*}
    m(\theta,\theta^\star) &= R(\theta)-R(\theta^\star) \\
    & = \int_\mathbb{Z} \int_{\theta(z)\wedge \theta^\star(z)}^{\theta(z)\vee \theta^\star(z)} |2\eta_z(x)-1|P(dx)P(dz).
\end{align*}
And, since $\ell_\theta$ is bounded by $1$, 
\[v(\theta,\theta^\star)\leq\int_\mathbb{Z} \int_{\theta(z)\wedge \theta^\star(z)}^{\theta(z)\vee \theta^\star(z)} P(dx)P(dz)  = d(\theta,\theta^\star).\]
Since $|2\eta_z(x) - 1| \leq 1$ and, by Assumption~5(5),
\[ \int_{\theta(z)\wedge \theta^\star(z)}^{\theta(z)\vee \theta^\star(z)} P(dx) \lesssim  |\theta(z) - \theta^\star(z)|, \]
it follows that
\[\{m(\theta,\theta^\star)\vee v(\theta,\theta^\star)\} \lesssim \|\theta-\theta^\star\|_{L_1(P)}:=\int_\mathbb{Z} |\theta(z)-\theta^\star(z)| \, P(dz).\]
Let $J_n = n^{1/(1+\alpha)}$ and define a sup-norm ball around $\theta^\star$
\[B_n^\star  = \{(\beta,J): \beta \in \RR^J, J=J_n, \|\theta^\star - \hat\theta_{J, \beta}\|_\infty\leq CJ_n^{-\alpha}\}.\]
Then, by Assumption~\ref{assump:bdd_density}(3), and using the same argument as in the proof of Theorem~1 in \citet{shen.ghosal.2015} we have
\[\Pi(B_n^\star) \gtrsim  e^{-CJ_n\log n},\]
for some $C>0$.  Since $J_n\log n \lesssim n\omega\eps_n$ and $\theta\in B_n^\star$ implies $\{ m(\theta,\theta^\star)\vee  v(\theta, \theta^\star)\}\lesssim \eps_n$, it follows that (14) holds with $r=1$.

Next, we verify Condition~1.  By Assumption~\ref{assump:bdd_density}(4)
\begin{align*}
    m(\theta,\theta^\star) & \geq h \int_\mathbb{Z} \int_{\theta(z)\wedge \theta^\star(z)}^{\theta(z)\vee \theta^\star(z)} P(dx)P(dz) = h \, d(\theta,\theta^\star).
\end{align*}
Then, Lemma 7.26 in \citet{wasserman} implies
\begin{align*}
Pe^{-\omega(\ell_\theta - \ell_{\theta^\star})} & \leq \exp\{C\omega^2v(\theta,\theta^\star)-\omega m(\theta,\theta^\star)\}\\
& \leq \exp\{ -\omega (h - C\omega) \,  d(\theta,\theta^\star) \}, 
\end{align*}
where $C>0$ depends only on $\omega$.  For small $\omega$, $C = O(1+\omega)$, so if $\omega(1+\omega) \leq h$, then 
\[Pe^{-\omega(\ell_\theta - \ell_{\theta^\star})} \leq \exp\{-K\omega d(\theta,\theta^\star)\} \] 
for a constant $K$ depending on $h$, which verifies Condition~1 with $r=1$.

\end{document}